\newcommand{\nback}[1][-.95pt]{
  \mathrel{\raisebox{#1}{$\rotatebox[origin=c]{-315}{\scaleobj{0.55}{-}}$}}
}
\newcommand{\precneq}{%
\mathrel{\ooalign{$\preccurlyeq$\cr\kern1.2pt$\nback$}}}
  \newcommand{\m}{    \mathsf m}
 \newcommand{\X}{   \mathsf X}
\newtheorem{corollary}{Corollary}
\newtheorem{definition}{Definition}
\theoremstyle{definition} 
\newtheorem{example}{Example} 
\newtheorem{lemma}{Lemma}
\newtheorem{proposition}{Proposition}
\newtheorem{remark}{Remark}
\newtheorem{theorem}{Theorem}
 \newcommand{\E}{  \mathscr  E}  
 \renewcommand{\sc}{  \mathscr  S}  
 \newcommand{\conv}{\textrm{conv}}  
   \renewcommand{\k}{   \mathsf k}  
   \newcommand{\F}{  \mathscr  F }  
\renewcommand{\P}{\mathsf{P}}
\newcommand{\M}{\mathsf{M}}
\newcommand{\Leb}{\mathcal{L}}
\newcommand{\C}{\mathcal C}
\newcommand{\var}{\text{Var}}
  \newcommand{\s}{  \mathscr  s}
\newcommand{\remove}[1]{}
 \newcommand{\sinc}{\textrm{sinc}}  
\renewcommand{\sp}{\textrm{sp}}
\title{Rigidity of random stationary measures and applications to point processes}
\author{Rapha\"el Lachi\`eze-Rey\footnote{\texttt{raphael.lachieze-rey\@math.cnrs.fr}, Inria Paris, France, and Lab. MAP5, Universit\'e Paris Cit\'e, France}}
\date{}
\renewcommand{\c}{   \mathsf c}
\renewcommand{\C}{   \mathsf C}
\renewcommand{\S}{   \mathsf S}
\begin{document}
\maketitle

%

 {\bf Abstract } The  {\it number rigidity} of a stationary point process $ \P$ entails that for a bounded set $ A$ the knowledge of $\P $ on $ A^{c}$ a.s. determines $\P(A)$; the $ k$-order rigidity  means  the  moments of $ \P1_{A}$  up to order $ k$ can be recovered.
We show that
  $ k$-rigidity occurs if the continuous component $ \s$ of $ \P$'s  {\it structure factor} has a zero of order $ k$ in $ 0$, by exploiting a connection with  Schwartz's Paley-Wiener theorem for analytic functions of exponential type; these results apply to any random $ L^{2}$ wide sense stationary measure   on $ \mathbb{R}^{d}$ or $ \mathbb{Z} ^{d}$.
In the continuous setting, these local conditions are also necessary   if   $ \s_{}$ has finitely many zeros, or is isotropic, or  at the opposite separable. This  explains why no model seems to exhibit rigidity in dimension $ d\geqslant 3$, and allows to efficiently recover many recent rigidity results about point processes. For a field on $ \mathbb{Z} ^{d}$, these results hold provided  $  \# A  >2k$. 
For a continuous Determinantal point process with reduced kernel $ \kappa $,   $ k$-rigidity is equivalent to  $ (1- \widehat {\kappa ^{2}})^{-1}$ having a zero of order $ k$ in $ 0$, which answers questions on completeness and number rigidity. We also deduce some non-integrability results in the less tractable realm of Riesz gases. Finally, we are able to prove that random stationary quasicrystals are maximally rigid on any compact.\\

  
 \label{ch:sufficiency}
 
  {\bf Keywords:} rigidity, point processes, random measures, hyperuniformity, Determinantal point processes, Riesz gases, quasicrystals.
  
 \section*{Introduction}

The first instance of a linear prediction problem for a stationary process goes back to   \cite{Sz20},  for a random time series $ \{\X_{k};k\in \mathbb{Z} \}$. He gave a necessary and sufficient condition for  the process to be {\it deterministic:}   {\it the future is entirely determined by the past}, i.e.
\begin{align*}
\sigma (\X_{k};k>0)\subset \sigma (\X_{k};k\leqslant 0)
\end{align*}through linear interpolation, if and only if the spectral density $ \s_{}$ satisfies 
\begin{align*}
\int_{  \mathbb  T  }\ln( \s_{}(u)  ) = -\infty 
\end{align*}
where $  \mathbb  T   = [-\pi ,\pi ].$
\cite{Kolmo-rigid}  further studies the non-deterministic case, and in particular addresses the weaker linear interpolation problem of whether $ \X_{0}\in \sigma (\X_{k};k\neq 0)$. He proves that it is so  if and only if 
\begin{align}
\label{eq:kolmo-condition}
\int_{   \mathbb  T   }\s_{}^{-1}(u)du = \infty .
\end{align}
A detailed account of the line of research about random linear interpolation can be found in  \cite{Rozanov}.
Much more recently, other similar problems emerged for higher-dimensional continuous models with the notion of  {\it rigidity} for a  {\it stationary point process}, i.e. a random   locally finite set of points $ \P\subset \mathbb{R}^{d}$ which law is invariant under spatial translations. The oldest such result might be   \cite{AizMart}, about 1D Coulomb gases, but the systematic study of rigidity really started with the introduction of the notion of  {\it tolerance}   (\cite{LyonsSteif,HolSoo}), then \cite{GP17} coined the term  {\it number rigidity} as the property that 
\begin{align*}\#\P\cap B(0,1)\in \sigma (\P\cap B(0,1)^{c}).
\end{align*}
This  obviously holds for  {\it shifted lattices}, e.g. $ \P = \{\m + U;\m\in \mathbb{Z} ^{d}\}$ (where $ U$ is uniform in $ [0,1]^{d}$ and ensures that $ \P$ is invariant under translations with non-integer coordinates), as one can easily  deduce the number of lattice points ``hidden'' in   $ B(0,1)$ by only observing $ \P\cap B(0,1)^{c}$. They   realised that, somehow surprisingly, also some strongly non-lattice like models, the  {\it infinite Ginibre ensemble} $ \P_{\textrm{Gin}}$, and the zero set $ \P_{\textrm{GAF}}$ of the  {\it planar Gaussian Analytic Function}, satisfy number rigidity.

As it turns out, these two processes possess  also possess the property of  {\it hyperuniformity}, i.e. 
\begin{align}
\label{eq:hu}
\frac{ \textrm{Var}\left(\#\P\cap B(0,r)\right)}{   \Leb(B(0,r))}\xrightarrow[r\to \infty ]{}\; 0,
\end{align}
where $ \Leb$ denotes Lebesgue measure,
at the difference of standard disordered systems such as Poisson processes; this property also is strongly reminiscent of lattice-like models. 
The surprising fact that it occurs for locally disordered,  {\it amorphous} random measures, like $ \P_{\textrm{Gin}},\P_{\textrm{GAF}}$, or other Coulomb systems, has been systematically investigated by physicists since the 90's. 
This property requires some sort of long distance dependency, compatible with  locally disordered configurations, often referred to as {\it global order and local disorder}. The activity around hyperuniformity has not stopped growing until now, as this property concerns a large diversity of models involving Coulomb systems, Gaussian analytic functions, eigenvalues of random matrices, Determinantal Point processes, and has many applications. The literature is too wide to be cited exhaustively, see for instance the survey of  \cite{Tor18} for materials science, of  \cite{GosLeb} for statistical physics, or the more mathematical discussion  of  \cite{Coste}.  

 Many models have been proven to be rigid since then, mostly in the realm of hyperuniformity: some Determinantal Point Processes (DPPs)  (\cite{BufAiry,BufBalanced, BufLinear,Buf-conditional}), eigenvalues of random operators  (\cite{rigidity-schrodinger}), Pfaffian processes  (\cite{BufPfaff}), Coulomb and Riesz systems  (\cite{DHLM,ChhaibiNaj,Cha17,DereudreVasseur,AizMart}), zeros of Gaussian processes (\cite{GP17,GK21,Lac20}), stable matchings  (\cite{KLY}),  and others  (\cite{GL18,KlattLast}). In fact, the rigidity terminology is  more generally used to describe a strong form of hyperuniformity, i.e. a system of  particles  where the variance of smooth linear statistics is one or several orders of magnitude below the Poisson behaviour  (\cite{Cha17,BBNY,Ganguly-Sarkar}).
Besides the striking nature  of rigidity and its link with hyperuniformity, it has proven to be a useful property in other types of problems, such as continuous percolation  \cite{GKP}, or to establish the singularity of conditional measures  \cite{Ghosh-conditional}. \cite{OsadaRigid} recently proved a relation with diffusive dynamics of particle systems, \cite{GhoshComplete} also exploited number rigidity to show the completeness of random sets of exponential functions, an interesting property in signal processing,  \cite{LyonsSteif} study this property to show phase uniqueness for some models from statistical physics.


Stronger forms of rigidity have also emerged. For instance, for the $ \P_{\textrm{GAF}}$ model,  \cite{GP17} showed that not-only the number of particles in $ B(0,1)$ can be determined, but also the first moment, or center of mass $ \sum_{x\in \P_{\textrm{GAF}}\cap B(0,1)}x$, corresponding to {\it $ 1$-rigidity}, and that   no further moment can be determined from the observation of $ \P_{\textrm{GAF}}\cap B(0,1)^{c}$.    They proved  similarly that for the Ginibre process no moment can be determined beyond order $ 0$. Another such result appears in  \cite{DHLM} for Sine$ _{\beta }$ processes above order $ 0.$ \cite{GK21} generalise this concept to that of $ k$-rigidity, where the $ k$-th order moment is determined.

Until now, both in the physics and mathematics litterature, the precise nature of the connection between rigidity and hyperuniformity is not precisely understood, see the discussions in   \cite{GosLeb,Coste}.  It seems in particular  that rigidity is not  understood in dimensions $d\geqslant  3$,     \cite{Cha17} discusses at length this question and  proves that the  {\it hierarchical Coulomb gas}, a simplified version of the stationary 3D Coulomb gas which is not formally stationary, is not number rigid.  \cite{DereudreVasseur} conjecture that the number rigidity of a Riesz gas of index $ s$ occurs  if and only if  $ s\leqslant d-1$. \\

 {\bf Necessary and sufficient conditions.}
In the current article, we study the rigidity properties of a $ L^{2}$ (wide-sense) stationary measure $ \M$ on $ \mathbb{R}^{d}$ or $ \mathbb{Z} ^{d}$, where a point process $ \P$ is naturally associated to the atomic measure $ \sum_{x\in \P}\delta _{x}$. We shall establish for $ k\in \mathbb{N} $ a characterisation of $ k$-th order linear rigidity, which is the property that the $ k$-th order moments of    $ \M$ restricted to $ B(0,1)$ can be a.s. recovered (linearly) from $ \M  1_{ B(0,1)^{c}}$, in terms of the behaviour around the origin of the spectral measure $ \S$, the generalised Fourier transform of the covariance measure, and more precisely of its continuous component  $ \s_{}(u)du$. 
 The  deep relation between the behaviour of $ \S_{}$ around $ 0$ and hyperuniformity emerges from the fact that  \eqref{eq:hu}  is the equivalent in the Fourier domain to the vanishing of   $ \S$ near $ 0$ in a weak sense. We show exactly how the decay exponent of $ \s_{}$  is related to its degree of rigidity, i.e. the number of moments of $ \M1_{ B(0,1)}$ that can be determined by the outside configuration $ \M1_{ B(0,1)^{c}}.$
 
More precisely, on a bounded set $ A$, it is sufficient for $ k$-rigidity  that $ \s_{}^{-1}$ has a pole of order $ k$ in $ 0$ in a particular sense (Theorem \ref{thm:main-krigid-iso}). For  {\it number rigidity} ($ k = 0$), if $ \s_{}$ is  {\it isotropic}, i.e. invariant under rotations of $ \mathbb{R}^{d}$, or if $ d = 1$, it exactly means
that for any neighbourhood $ U$ of $ 0,$
\begin{align}
\label{eq:k-rigid-intro}
\int_{U }\s_{}(u)^{-1}\|u\|^{2k}du = \infty ,
\end{align}
the situation can be more complicated in higher dimensions   without isotropy (Proposition \ref{ex:spectral-counterexample}).
This explains most  results cited above about number rigidity and $ 1$-rigidity, and also generalises the sufficient conditions of  \cite{GL-sufficient} and  \cite{BufLinear} for number rigidity.

 We show that this condition is necessary  under some structural assumptions such as finite number of zeros, isotropy or separability (Proposition \ref{prop:not-lmr-finite-z} and Theorem \ref{thm:converse-rigidity}).  It also explains why rigidity does not seem to occur in dimension $ d\geqslant 3$: if $ \s_{}(u)\sim \sigma \|u\|^{2}$ for some $ \sigma >0$ as $ u\to 0$, as it is expected for many hyperuniform integrable systems, if $ d\geqslant 3,$ 
the left hand member of  \eqref{eq:k-rigid-intro} is finite for all $ k$. Hence one must find processes where the structure factor decreases faster to $ 0$ to find higher order rigidity. We exhibit in Section  \ref{sec:flower} a class of processes $ \P_{n},$ for $ n$ a prime number such that  $ \s_{}(u)\sim c_{n}\|u\|^{2n}$ for some $ c_{n}>0$. We also give general  {\it tolerance results}, meaning that if $ \s$ does not decay too fast close to $ 0$, its higher order moments are not constrained (Theorem \ref{thm:tolerant-simple}).

The converse part also allows to assess the correlation properties of a point process based on its rigidity behaviour: it is shown by  \cite{DereudreVasseur} and  \cite{DHLM} that some $ \beta $-ensembles are not $ k$-rigid for some $ k$, which implies non-integrability results on their correlation functions (see Section \ref{sec:gibbs}).

Using also Gaussian fields,  we give counter-examples of spectral measures showing that these results are optimal in full generality.

 {\bf Determinantal point processes.} The results apply efficiently to stationary DPPs. Theorem \ref{thm:k-rigid-dpp} yields that a DPP on $ \mathbb{R}^{d}$ with kernel $ K$ is   $ k$-rigid  if and only if  $ ( 1- \F({  | K | ^{2}}))^{-1}$ has a pole of order $ k$ in $ 0$, retrieving for $ k = 0$  the suffucuency conditions  of \cite{GP17,GhoshComplete,BufLinear}, and also completes the answer of  \cite{GhoshComplete} to  a question of  \cite[Section 4]{Lyons} on completeness of complex exponentials.  For DPPs on $ \mathbb{Z} ^{d}$, a pole of order $2k$ of $ \s$ in $  \mathbb  T  ^{d}$ indeed implies $ k$-rigidity, but the converse is only true in dimension $ 1$, or under some additional assumption.

 {\bf Discrete processes.} Stationary discrete processes $ \X = \{\X_{\m};\m\in \mathbb{Z} ^{d}\}$ show a similar connection between rigidity and the behaviour around $ 0$ of the spectral measure $ \S$, defined on $  \mathbb  T  ^{d}$. We show in Theorem \ref{thm:discrete} for $ A =  \llbracket m \rrbracket^{d}	: = \{-m,\dots ,m\}^{d}$ that $ \X$ is not  {\it maximally rigid} on $ A$, meaning $ \{\X(\m); |\m | \leqslant m \}\not\subset \sigma (\{\X_{\m}, | \m | >m\})$, 
  if and only if  there exists a trigonometric polynomial $ \psi (u) = \sum_{ | \m | \leqslant m}a_{\m}e^{i\langle \m, u\rangle}$ such that, 
\begin{align*}\int_{  \mathbb  T  ^{d}}\frac{  | \psi (u) | ^{2}}{\s_{}(u)}du < \infty ,
\end{align*} and not $ k$-rigid if furthermore the derivatives of $ \psi $ up to order $ k$ do not vanish in $ 0$. For this reason, we are only able in the discrete setting to give the example of a 1-rigid field that is not 0-rigid (Example \ref{ex:discrete}).

Besides the seminal work of \cite{Kolmo-rigid} in dimension $ 1$, this unifies some results established in the context of discrete DPPs, such as the result of \cite{LyonsSteif} regarding the  {\it strong full K} property for uniqueness of phase transition in statistical physics.
The condition can be made more explicit in dimension $ 1$: $ \X$ is  {\it maximally rigid} on $ \llbracket m \rrbracket$   if and only if  the number of poles of $ \s_{}$ counted with multiplicity is $ > m$, as for the condition \eqref{eq:kolmo-condition} of  \cite{Kolmo-rigid} in the case $ d = 1,m = 0,\k = 0$, or the generalisation of  \cite{BufLinear} in the case $ d = 1,\k = 0,m\in \mathbb{N}$: $ \X$ is $ 0$-rigid, i.e. $ \sum_{ | \m | \leqslant m}\X_{\m}\in \sigma (\{\X_{\k}; | \m | >m\})$,  if and only if   $ 0$ is a pole, i.e.
\begin{align*}
\int_{ B(0,\varepsilon )}\s_{}^{-1}(u)du = \infty,\varepsilon >0 ,
\end{align*}or (obviously)  if $ \X$ is  {\it maximally rigid} (Proposition \ref{prop:spec-1D}).\\

 {\bf Rigidity of the class of functions of exponential type}.
 Let us sketch here the method to derive these results for a stationary random measure $ \M$ with spectral measure $ \S$ (formally developped at Section \ref{sec:theory}). The basic idea is the following: given a function $ \gamma $ bounded on some compact $ A$, such as $ \gamma  = 1_{A}$, the linear statistic $   \int_{ }\gamma d\M$ is completely determined by $ \M1_{A^{c}}$ if
\begin{align*}
\inf_{h}
 \textrm{Var}\left(\int_{ }\gamma d\M-\int_{ }hd\M\right) = 0,
\end{align*}
for $ h$ in the space $ \mathcal{C}_{c}^{\infty }(A^{c})$  of $ \mathcal{C}^{\infty }$ functions with compact support in $ A^{c}$. By definition of $ \S$, this translates in the Fourier domain as 
\begin{align*}
\inf_{h\in \mathcal{C}_{c}^{\infty }(A^{c})}\int_{ } |  \hat \gamma - \hat h | ^{2}d\S = 0.
\end{align*}
In other words, $ \hat \gamma $ must be in the $ L^{2}(\S)$-closure of the space $ H_{A}$ spanned by the $ \hat h$ for $ h$ in $ \mathcal{C}_{c}^{\infty }(A^{c})$, or equivalently it must be orthogonal to any function $ \varphi \in H_{A}^{\perp}$. 
The crucial observation   is that for such $ \varphi ,$  the tempered distribution $ \varphi \S$ has by definition a spectrum bounded (by $ A$). This conveys a very strong form of regularity by the Schwartz-Paley-Wiener theorem (Theorem \ref{thm:SPW}):   $\psi : =  \varphi \s_{} = \varphi \S$  has no singular part and must be an analytic function of exponential type on $  \mathbb C ^{d}$. We have also immediately $ \psi \in L^{2}(\s_{}^{-1})$ because $ \int_{ }\psi ^{2}\s_{}^{-1} = \int_{ }\varphi ^{2}\S$.  If such $ \psi $ do not exist besides the null function, $ H_{A}^{\perp} = \{0\}$ and all $ \gamma $ can be predicted, meaning the process is maximally rigid. Number rigidity means that $ \gamma  \equiv 1$ is orthogonal to  $ \hat \psi $ for all such $ \psi $, i.e. for all entire function $ \psi $ with bounded spectrum, being in $ L^{2}(\s_{}^{-1})$ implies
\begin{align*}
\psi (0) =\langle 1, \hat \psi  \rangle =  0.
\end{align*}
If $ \s_{}^{-1}$ is non-integrable around $ 0$, indeed $ \psi (0) = 0$ as otherwise $ \psi \notin L^{2}(\s^{-1})$, hence $ \M$ is  number rigid.

These high regularity and integrability requirement on $ \psi $ are in some sense the source of the  {\it rigidity phenomenon}. It is then easier to characterise whether all such $ \psi \in L^{2}(\s_{}^{-1})$ must satisfy $ \langle \hat \gamma ,\psi \rangle_{\mathbb{R}^{d}} =  0$, meaning $ \gamma $-rigidity, and it mostly depends on the zeros of $ \s_{}$, or more generally on how often $ \s_{}(u)$ is close to $ 0$. What we study is not mere rigidity, but  {\it linear rigidity}, meaning that $\int_{ }\gamma d\M$ should be approximated by  linear functionals $ I_{\M}(h)$, not by  {\it any} functional of $ \M1_{A^{c}}.$ \\

 Let us present the rest of the paper. In Section  \ref{sec:spectral}, we introduce the spectral measure $ \S$ necessary to state the main results. In Section  \ref{sec:main-results}, we formally introduce the  concepts of rigidity and $ \k$-poles and derive the announced necessary and  sufficient condition. We show how they apply to some particular examples, in particular we give examples satisfying any prescribed order of rigidity, and state conversely the tolerance results; we also give the corresponding results for discrete processes. Section  \ref{sec:appli} is devoted to the applications to DPPs, Gibbs measures, and quasicrystals. Finally, Section \ref{sec:theory} gives the formal framework about tempered distributions, Schwarz' Paley-Wiener Theorem and the proofs. \\
 
 In the companion paper  \cite{rigid-companion}, we explore the maximal rigidity phenomenon, and its consequences for periodic discrete random fields, quasicrystals, and   stealthy systems. We also find   another surprising  behaviour,   {\it short range rigidity}, where  a continuous random field   with finite range dependance  exhibits a phase transition on $ A = B(0,\rho )$: maximally rigid iff $ \rho \leqslant 1$, and not rigid at all otherwise.

 \section{Spectral measure}
    \label{sec:spectral}
     \newcommand{\e}{   \mathsf E}

 Even though our main motivation is the class of point processes, we consider more generally a random $ L^{2}$  wide-sense stationary (WSS) signed measure $ \M$, i.e. a collection of real $ L^{2}$ random variables $ I_{\M}(f)$ on a probability space $ (\Omega ,\mathbf{P}),$ for $ f$ in the space $ \mathcal{C}_{c}^{b}(\mathbb{R}^{d})$ of measurable bounded and compactly supported functions, satisfying for $ f,g\in \C_{c}^{b}(\mathbb{R}^{d})$ \begin{itemize}
\item[(i)] $I_{\M}(f + g) = I_{\M}(f) + I_{\M}(g)$
\item [(ii)] $\textrm{Var}\left(I_{ \M}(\tau _{x}f) \right) =  \textrm{Var}\left(I_{\M}(f)\right)$ where $ \tau_{x}$ is the operator of translation by $ x\in \mathbb{R}^{d}$,
\item[(iii)]  letting $ \hat f(u) = \int_{}e^{-i \langle u,x \rangle} f(x)dx $, 
\begin{align}
\label{eq:phase-formula-SF}
 \textrm{Var}\left(I_{\M}(f)\right) = (2\pi )^{-d}\int_{   } | \hat f | ^{2}d\S,
\end{align}
for some non-negative symmetric measure $ \S$ on $ \mathbb{R}^{d}$ called  {\it spectral measure} ({\it symmetric} means $ \M(A) =  \dot \M(A): = \M(-A),A\in \mathcal{B}(\mathbb{R}^{d})$).
\end{itemize}
 We develop below the example of point processes and Gaussian random fields.
The class of admissible square integrable linear statistics $ I_{\M}(f)$ generally extends to a broader class of functions $ f$. 
Lemma \ref{lm:decrease-S-temp} yields that $ \S$ is a  {\it tempered distribution}, and as such it is the Fourier transform of some tempered distribution $ \C$, also symmetric, called   the  {\it correlation measure}, characterised through a simple calculation by 
\begin{align}
\label{eq:cov-measure}
 \textrm{Cov}\left(I_{\M}(f),I_{\M}(g)\right) =  \int_{ \mathbb{R}^{d}}f(y)g(x + y)\C(dx)dy,f,g\in \mathcal{C}_{c}^{b}(\e).
\end{align}
Remark that despite its name, $ \C$ is not positive, and it is not necessarily a (signed) measure, as $ \C(\mathbb{R}^{d})$ might not make sense.
Taking for instance as $ f,g$ indicator functions of small balls around  points $0$  and $ x\neq 0$, $ \C(dx)$ measures the covariance between masses around $ 0$ and $ x.$

%
%

Informally, the behaviour of $ \S$ at infinity represents the regularity of $ \M$; for instance if $ \M$ is a smooth Gaussian field, then $ \S$ will   decay fast at $ \infty $, whereas if $ \P$ is a point process, $ \S$ has likely infinite mass; the regularity of $ \S$ around $ 0$ is related to the long range dependency of $ \M$ as we will see.

\subsection{  Point processeses} Point processes are the prominent examples motivating the current work and related line of literature about rigidity. 
A  {\it point configuration} $ \eta $ is a locally finite counting measure on $ \mathbb{R}^{d}$, i.e. $\eta :\mathcal{B}(\mathbb{R}^{d})\to \mathbb{N}$, and a random point process is a random variable $ \P$ in the space of configurations endowed with the counting $ \sigma $-algebra generated by mappings $ \P\mapsto \P (A)$ for $ A$ compact.  Say that $ \P$ is  {\it simple} when it has no multiple point,  it is then sometimes assimilated to the random set formed by its support to use set-related notation such as $ \P\cap A$, as there is a one-to-one correspondance. Then the local square integrability assumption writes $ \mathbf{E}(\P(A)^{2})<\infty $ for bounded measurable $ A$. The corresponding WSS random measure is defined by $ I_{\P}(f) = \int_{}fd\P$, see  \cite{Coste} for details.
   
   The spectral measure is sometimes called  {\it structure factor} in the context of point processes.
    For instance if $ \P  $ is the unit intensity homogeneous Poisson process,  \eqref{eq:cov-measure} yields $ \C  = \Leb,$  then Plancherel formula gives $\S= \delta _{0} $ the Dirac mass in $ 0$, but this example is not of great interest for us because, due to its spatial independence,  $ \P$ does not experience rigidity or hyperuniformity.
    For point processes, long range interaction is sometimes measured through the  {\it truncated correlation measure} $ \rho ^{(2)}_{tr}(dt)$ defined by
\begin{align*}
 \rho ^{(2)}_{tr}  = \C- \delta _{0}.
\end{align*}
The Dirac mass $ \delta _{0}$ comes from  the diagonal terms in the double summation of the covariance, reflecting the purely atomic nature of the point process.

\subsection{Discrete fields and dual group $ \hat \e$ notation}
We also consider a WSS random field on $ \mathbb{Z} ^{d}$, i.e. a collection of centred variables of $ \X = \{\X(\m);\m\in \mathbb{Z} ^{d}\}$ such that 
\begin{align*}
\c(\m) : = \mathbf{E}((\X_{\m'})\X(\m + \m'))
\end{align*}
does not depend on $ \m'\in \mathbb{Z} ^{d}$, under the convention $  \textrm{Var}\left(\X(0)\right) = 1$.  Still in the sense of tempered distributions (formally applied to $ \C := \sum_{\m\in \mathbb{Z} ^{d}}\c(\m)\delta _{\m}$), the $ \c(\m)$ are the coefficients of a symmetric probability measure $ \S$ on $  \mathbb  T  ^{d}$, also called the  {\it spectral measure}, i.e.
\begin{align*}
\c(\m) = (2\pi )^{-d}\int_{  \mathbb  T  ^{d}}e^{\imath \langle \m,u \rangle}\S(du),\m\in \mathbb{Z} ^{d}.
\end{align*}
With the random measure $I_{\M}(f)$ on $  \mathbb{Z} ^{d}$ defined by $ \sum_{\m}f(\m)\X(\m)$, some arguments are very similar in $ \mathbb{R}^{d}$ and $ \mathbb{Z} ^{d}$, and we adopt the general notation $ \e\in \{\mathbb{R},  \mathbb Z  \}$.
A simple calculation shows that \eqref{eq:phase-formula-SF} is still satisfied with $  \mathbb  T  ^{d}$ as integration domain for $ f$ with finite support in $ \mathbb{Z} ^{d}$, and we denote the dual group  $ \hat \e =  \mathbb  T   $ if $ \e =   \mathbb Z , \hat \e = \mathbb{R} $ if $ \e = \mathbb{R} .$ On a purely formal side, a WSS discrete field $ \X$ can be also viewed as WSS  random measure on $ \mathbb{R}^{d}$ under the form $ \sum_{\m}X(\m)\delta _{\m + U}$, where $ U$ is independent and uniform on the dual cell $ [0,1]^{d}$.
    
\subsection{  Gaussian process}
  \label{ex:gauss} Let $ \e,\hat \e $ as above.
 Given any finite  non-negative symmetric measure $ \S$   on $ \hat \e^{d}$,  the spectral representation theorem (\cite[Th. 5.4.2]{AT07}) gives  a (mean square continuous) WSS centred Gaussian process $ \X = \{\X(x),x\in \e\}$ such that $ \S$ is the spectral measure of   $ I_{\M}(f) := \int_{ }f(x)\X(x)\mu (dx)$, where $ \mu $ is either Lebesgue ($ \e = \mathbb{R}$) or the counting measure ($ \e = \mathbb{Z} $). $ \X$ is furthermore strongly stationary in the sense where $\tau_{y}\X :=  \{\X(x + y);x\in \e\}$ has the same law as $ \X$, for $ y\in \e.$
  Since $ \S$ is finite, $ \C = (2\pi )^{-d}\F\dot\S = \mathsf c \Leb$ where the  covariance function $   \mathsf c$ is defined by 
  \begin{align*}
\mathbf{E}(\X(0)\X(x)) =   \mathsf c(x),x\in \e.
\end{align*}

  \section{Rigidity of random stationary measures}
 \label{sec:main-results}
 Let $ \e\in \{\mathbb{R},\mathbb{Z} \}.$
For $ \M$ a WSS random measure, and $ A\subset \e$, we denote by $ \M_{A}$  the collection $ I_{\M}(f)$ for $ f\in \mathcal{C}_{c}^{b}(A)$.
The general problem of rigidity   is to be able to infer a functional $ F(\M_{A} )$   knowing only $ \M_{A^{c}}$. The  {\it mass rigidity}, for instance, called  {\it number rigidity} for point processes, means that $ \M(A)\in \sigma (\M _{A^{c}})$. {\it Maximal rigidity} means that any bounded functional $ F$ $ \M_{A}$-measurable can be  predicted by $ \M_{A^{c}}$, or equivalently 
\begin{align*}\sigma (\M_{A})\subset \sigma (\M_{A^{c}}).
\end{align*}
For $ \gamma :\e\to  \mathbb C $,
 bounded with bounded support, we are interested here in predicting $ I_{\M}(\gamma )$, i.e. in determining if $ I_{\M}(\gamma )\in \sigma (\M_{A^{c}})$, called $\gamma  $-rigidity on $ A$, and maximal rigidity (MR) on $ A$  holds if $ \gamma $-rigidity holds for all such $ \gamma $. Define furthermore  {\it linear} $ \gamma $-rigidity if $ I_{\M}(\gamma )$ can be approximated by  linear statistics of $ \M_{A^{c}}$, i.e. if a.s. and in $ L^{2}(\mathbf{P})$, for some $ h_{n}\in \mathcal{C}_{c}^{b}(A^{c}),n\geqslant 1,$
\begin{align*}
I_{\M}(\gamma ) = \lim_{n}I_{\M}(h_{n}),
\end{align*}which we write $ I_{\M}(\gamma )\in \sigma _{lin}(\M_{A^{c}}),$
and  {\it linear maximal  rigidity} (LMR) if $ I_{\M}(\gamma )\in \sigma _{lin}(\M_{A^{c}})$ for all $ \gamma $  bounded. The term  {\it linear} is sometimes omitted in this article, but all methods employed here pertain to linear rigidity.
 
We are particularly interested in (linear) $ \k$-rigidity for $ \k = (\k_{i})\in \mathbb{N}^{d}$, i.e. $ \gamma _{\k}$-rigidity for $ \gamma _{\k}(t) = t^{\k} $  with $ t^{\k}= \prod_{i = 1}^{d}t_{i}^{\k_{i}},t\in \mathbb{R}^{d}$. 
For $ k\in \mathbb{N} $, say that $ \M$ is $ k$-rigid if its moments up to order $ k$ can be predicted, i.e. if  it is $ \k$-rigid for every $ \k$ with $  | \k |  \leqslant  k$ (in particular, number rigidity corresponds to $ 0$-rigidity).

Call  {\it convex body} $ A$ a compact convex set with non-empty interior. In the heart of the paper, Theorem \ref{thm:heart-article} provides an abstract necessary and sufficient condition for linear $ \k$-rigidity   on $ A$ in terms of $ \S$, and shows in particular how linear $ \gamma $-rigidity  depends on the  {\it spectral density} $ \s$, the density of the continuous part of $ \S$; $ \s$ is symmetric and non-negative, as $ \S$. For this reason, we sometimes talk about the rigidity of $ \s$ instead of the linear rigidity of $ \M$, but it means exactly the same thing. 

\subsection{Local sufficient condition}  
We consider a random stationary measure $ \M$ on $ \mathbb{R}^{d}$ with spectral density $ \s$. The crucial concept is that of a $\k$-pole.\begin{definition}

Given $ \k\in \mathbb{N}^{d}$, $ s:\mathbb{R}^{d}\to \mathbb{R}_{ + }$, say that $ \s_{}^{-1}$ has a  {\it pole of order $ \k$}  in zero, or $ \k$-pole, if it is  {locally $ u^{\k}$-incompatible} around $ 0$: for every polynomial $ Q = \sum_{\m}a_{\m}u^{\m}$  such that $\int_{ B(0,\varepsilon )} | Q | ^{2}\s_{}^{-1}<\infty $ for some $ \varepsilon >0$, we have $ a_{\k} = 0$ (with $ 1/0 = \infty $). 
\end{definition} For $ \k,\k'\in \mathbb{N}^{d}$, say that $ \k'\preceq \k$ if $ \k'_{i}\leqslant \k_{i}$ for $ i = 1,\dots ,d.$  Remark that, through multiplication of $ Q$ by $ u^{\k-\k'},$ a $\k$-pole is a $\k'$-pole for $ \k'\preceq \k.$
 Let us state our most general sufficient condition.\begin{theorem}
\label{thm:main-krigid-iso}
If $ \s_{}^{-1}$ has a $ \k$-pole  in $ 0$,  $ \M$ is $\k$-rigid on  any  bounded measurable $ A\subset \mathbb{R}^{d}$.
 \end{theorem}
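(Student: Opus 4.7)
The plan is to pass to the Fourier side and exploit Schwartz's Paley--Wiener theorem together with a Taylor expansion at the origin. Set $\gamma := \gamma_\k \mathbf{1}_A$, which is bounded with support in $A$. By the spectral isometry \eqref{eq:phase-formula-SF}, $\gamma_\k$-rigidity on $A$ amounts to $\hat\gamma$ lying in the $L^2(\S)$-closure of $H_A := \{\hat h : h \in \mathcal{C}_c^b(A^c)\}$, and by the Hilbert projection theorem this reduces to showing that $\int \hat\gamma\,\overline{\varphi}\,d\S = 0$ for every $\varphi \in H_A^\perp$.

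The decisive step is the characterisation of $H_A^\perp$. If $\varphi \in H_A^\perp$, then the tempered distribution $\varphi\S$ is annihilated by every $h \in \mathcal{C}_c^b(A^c)$, so its inverse Fourier transform has support in $A$. Schwartz's Paley--Wiener theorem (Theorem~\ref{thm:SPW}) then implies that $\varphi\S$ is represented by a function $\psi$ that extends to an entire function on $\mathbb{C}^d$ of exponential type governed by $A$. Being a function, $\psi$ has no singular part, so $\varphi$ must vanish on the support of the singular part of $\S$, whence $\psi = \varphi\s$. Moreover $\int|\psi|^2\s^{-1}\,du = \int|\varphi|^2\s\,du \leq \|\varphi\|_{L^2(\S)}^2 < \infty$, so $\psi \in L^2(\s^{-1})$. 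A Parseval computation using $\widehat{t^\k} \propto \partial^\k\delta_0$ and the compact support in $A$ of the inverse Fourier transform of $\psi$ rewrites the target orthogonality as a multiple of $\overline{\partial^\k\psi(0)}$, reducing the theorem to the claim that every entire $\psi$ of exponential type in $L^2(\s^{-1})$ satisfies $\partial^\k\psi(0) = 0$.

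This reduced claim is where the $\k$-pole hypothesis enters, via Taylor polynomials. Given such a $\psi$, let $P_N(u) = \sum_{|\m|\leq N} a_\m u^\m$ be its degree-$N$ Taylor polynomial at $0$, so that $(P_N)_\k = a_\k = \partial^\k\psi(0)/\k!$. On a ball $B(0,\varepsilon)$ the remainder obeys $|\psi - P_N|^2 \leq C_N |u|^{2(N+1)}$. Choosing $N \geq |\k|$ large enough that $|u|^{2(N+1)}\s^{-1}$ is locally integrable near $0$, the bound $|P_N|^2 \leq 2(|\psi|^2 + |\psi-P_N|^2)$ yields $\int_{B(0,\varepsilon)}|P_N|^2\s^{-1}<\infty$, and the $\k$-pole property applied to the polynomial $P_N$ forces $a_\k = 0$. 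The degenerate case where no such $N$ exists corresponds to $\s^{-1}$ growing super-polynomially at the origin, and there any entire $\psi \in L^2(\s^{-1})$ must vanish identically, making the conclusion trivial.

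The main obstacle is the Paley--Wiener step, which converts a bare $L^2(\S)$-orthogonality into the strong conclusion that $\psi$ is entire of exponential type and lies in $L^2(\s^{-1})$, while also requiring careful bookkeeping of the singular part of $\S$ and of the distributional Parseval pairing with the polynomial $t^\k$. Once this regularity is in place, the Taylor argument above translates the local integrability hypothesis built into the $\k$-pole definition directly into the vanishing of $\partial^\k\psi(0)$.
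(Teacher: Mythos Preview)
Your overall strategy---Paley--Wiener to reduce to $\partial^\k\psi(0)=0$ for every $\psi\in\E(\conv(A))\cap L^2(\s^{-1})$---is exactly the paper's. The gap is in your handling of the ``degenerate case'' where no $N$ makes $\|u\|^{2(N+1)}\s^{-1}$ locally integrable at $0$. Your claim that then every entire $\psi\in L^2(\s^{-1})$ vanishes identically is false in dimension $d\geqslant2$. Take $\s(u)=(u_1-u_2)^2$ on $\mathbb{R}^2$ (this is Example~\ref{ex:order-is-complex}): for every $N$ one has $\int_{B(0,\varepsilon)}\|u\|^{2N}\s^{-1}=\infty$, because the singularity of $\s^{-1}$ lives on the entire line $\{u_1=u_2\}$ and is not damped by any isotropic power; yet $\s^{-1}$ has a $(0,0)$-pole (any polynomial in $L^2(\s^{-1};B(0,\varepsilon))$ is divisible by $u_1-u_2$), and $\psi(u)=(u_1-u_2)J_{d}(u)$ is a nonzero element of $\E(B(0,1))\cap L^2(\s^{-1})$ since $|\psi|^2\s^{-1}=|J_{d}|^2\in L^1(\mathbb{R}^2)$. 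The conclusion $\psi(0)=0$ happens to be true here, but your argument does not reach it.

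The underlying issue is that the Taylor remainder bound $|\psi-P_N|\leqslant C_N\|u\|^{N+1}$ is isotropic while the zero set of $\s$ need not be. The paper instead builds a polynomial adapted to the multi-index structure: with $\psi=\sum_\m a_\m u^\m$, it takes the finite set $I_0$ of $\preceq$-minimal indices among $\{\m:a_\m\neq0\}$ (Dickson's lemma) and sets $Q=\sum_{\m\in I_0\cup\{\k\}}a_\m u^\m$; every remaining monomial $u^{\m'}$ is dominated near $0$ by some $u^\m$ with $\m\in I_0$, giving $\psi(u)=Q(u)(1+o(1))$ and hence $Q\in L^2(\s^{-1};B(0,\varepsilon))$, after which the $\k$-pole hypothesis forces $a_\k=0$. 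No isotropic integrability of $\s^{-1}$ is ever invoked. Two minor remarks: Paley--Wiener only gives $\sp(\psi)\subset\conv(A)$, so the reduction to convex $A$ (which the paper performs first) is needed before identifying $\int_A t^\k\F\psi$ with $\partial^\k\psi(0)$; and you should check that $\varphi\S$ is a tempered measure before invoking Theorem~\ref{thm:SPW}, which follows from Lemma~\ref{lm:decrease-S-temp} via Cauchy--Schwarz.
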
  
  All the proofs of results of this subsection are at  Section \ref{sec:prf-main-krigid}. 
%
In many situations,    $ 0$ being a {$\k$-pole} is equivalent to 
\begin{align}
\label{eq:ass-general-k-rigid}
\int_{ B(0,\varepsilon )}{ u^{2\k}}{\s_{}^{-1}(u)}du = \infty 
\end{align}
for all $ \varepsilon >0:$

\begin{itemize}
\item If $ \k = 0$ (number rigidity), a $ 0$-pole, or just  {\it pole,} is indeed equivalent to  \eqref{eq:ass-general-k-rigid} because if $ Q(0)\neq 0, Q(u)\sim Q(0)$ as $ u\to 0.$
\item  
In dimension $ d = 1$, we can also say that $ \s_{}$ has a  {\it zero of multiplicity $2\k$}: for every non-zero polynomial $ Q$ on $ \mathbb{R},$ we have $ Q(u) = au^{q}(1 + o(1))$ as $ u\to 0$ for some $ q\in \mathbb{N},a\neq 0$,  and $ Q\in L^{2}(\s_{}^{-1};B(0,\varepsilon ))$  if and only if  $ \int_{ B(0,\varepsilon )}u^{2q}\s_{}^{-1} <\infty $, a $ \k$-pole indeed means  \eqref{eq:ass-general-k-rigid}.

\end{itemize}

This is unfortunately not always the case:
 \begin{example}
\label{ex:order-is-complex}
 Let $ \s(u) = (u_{1}-u_{2})^{2},u\in \mathbb{R}^{2}$. Hence $ \int_{B(0,\varepsilon ) }{ u_{1}^{2}}{\s^{-1}}(u)du = \infty $ for $ \varepsilon >0$, but it does not mean that $ \s^{-1}$ has a $ (1,0)$-pole because $ Q (u) := (u_{1}-u_{2})$ satisfies $ \partial _{1}Q (0)\neq 0$ and $ \int_{ B(0,\varepsilon )}Q^{2}\s ^{-1}<\infty .$ 
 \end{example} 

We have the following more general proposition, allowing in particular to treat isotropic spectral measures.

\begin{proposition}
\label{prop:k-incomp-iso}
Define $ \tilde \s(u ) = \sup_{v:\|v\| = \|u\| }\s_{}(v ).$ Let $ \k\in \mathbb{N}^{d}.$ Then $ \M$ is $\k $-rigid on  any bounded measurable $ A$ if for $ \varepsilon >0$
\begin{align}
\label{eq:iso-k-rigid}
\int_{B(0,\varepsilon ) } \tilde \s^{-1}(u)u^{2\k}du = \infty \textrm{  or equivalently }
\int_{B(0,\varepsilon ) }\tilde \s^{-1}(u)\|u\|^{2 | \k |    }du  = \infty ,
\end{align}
where $  | \k |  = \sum_{i}\k_{i}.$
\end{proposition}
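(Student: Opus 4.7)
The plan is to invoke Theorem~\ref{thm:main-krigid-iso}, for which I need to verify that $\s^{-1}$ has a pole of order $\k$ at the origin. Since $\s\leq \tilde\s$ pointwise, we have $\tilde\s^{-1}\leq \s^{-1}$, so any polynomial $Q$ with $\int_{B(0,\varepsilon)}|Q|^{2}\s^{-1}<\infty$ also satisfies $\int_{B(0,\varepsilon)}|Q|^{2}\tilde\s^{-1}<\infty$; it therefore suffices to prove the $\k$-pole property for the radial majorant $\tilde\s$, which I will write $\tilde\s(u)=\sigma(\|u\|)$. First, switching to polar coordinates $u=r\omega$ on the unit sphere gives
\[
\int_{B(0,\varepsilon)}\tilde\s^{-1}(u)u^{2\k}du \;=\; \Bigl(\int_{S^{d-1}}\omega^{2\k}d\omega\Bigr)\int_{0}^{\varepsilon}r^{d-1+2|\k|}\sigma^{-1}(r)dr,
\]
and analogously with $\|u\|^{2|\k|}$ the spherical prefactor becomes $|S^{d-1}|$; both prefactors are strictly positive, so the two conditions in \eqref{eq:iso-k-rigid} are equivalent to the single radial statement $\int_{0}^{\varepsilon}r^{d-1+2|\k|}\sigma^{-1}(r)dr=\infty$ for all $\varepsilon>0$.

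Next, I would take a polynomial $Q=\sum_{\m}a_{\m}u^{\m}$ with $\int_{B(0,\varepsilon)}|Q|^{2}\tilde\s^{-1}<\infty$ and decompose $Q=\sum_{j\geq 0}Q_{j}$ into its homogeneous parts of degree $j$. Let $k_{\min}$ be the smallest $j$ with $Q_{j}\not\equiv 0$, and set $P(r):=\int_{S^{d-1}}|Q(r\omega)|^{2}d\omega$. Expanding $|Q(r\omega)|^{2}=\sum_{j,j'}r^{j+j'}Q_{j}(\omega)\overline{Q_{j'}(\omega)}$ and integrating over $\omega$ shows that $P(r)$ is a polynomial in $r$ whose lowest-order term is $r^{2k_{\min}}\|Q_{k_{\min}}\|_{L^{2}(S^{d-1})}^{2}$, with strictly positive coefficient because $Q_{k_{\min}}$ is a nonzero polynomial on the sphere. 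Hence $P(r)\geq Cr^{2k_{\min}}$ for some $C>0$ and all sufficiently small $r$, which gives
\[
C\int_{0}^{\delta}r^{d-1+2k_{\min}}\sigma^{-1}(r)dr \;\leq\; \int_{0}^{\delta}r^{d-1}\sigma^{-1}(r)P(r)dr \;<\; \infty.
\]

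Finally, I compare exponents with the hypothesis: if $|\k|\geq k_{\min}$, then $r^{2|\k|}\leq r^{2k_{\min}}$ on $[0,1]$, so integrability at exponent $2k_{\min}$ would force integrability at exponent $2|\k|$, contradicting $\int_{0}^{\varepsilon}r^{d-1+2|\k|}\sigma^{-1}(r)dr=\infty$. Therefore $k_{\min}>|\k|$, meaning $Q$ has no monomial of total degree $\leq |\k|$; in particular $a_{\k}=0$. This establishes the $\k$-pole property of $\tilde\s^{-1}$, hence of $\s^{-1}$, and Theorem~\ref{thm:main-krigid-iso} then yields $\k$-rigidity on every bounded measurable $A$. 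The only delicate point is the leading-order analysis of $P(r)$: all cross terms in the expansion contribute strictly higher powers $r^{j+j'}>r^{2k_{\min}}$, and the leading coefficient cannot vanish since it is the squared spherical $L^{2}$-norm of a nonzero polynomial; everything else is routine comparison of integrability exponents.
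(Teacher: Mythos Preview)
Your proof is correct and follows essentially the same route as the paper: reduce to the radial majorant via $\s\le\tilde\s$, pass to polar coordinates, isolate the lowest homogeneous part of $Q$, and compare the resulting radial exponent $2k_{\min}$ against $2|\k|$ to force $k_{\min}>|\k|$ and hence $a_{\k}=0$, then invoke Theorem~\ref{thm:main-krigid-iso}. Your version is slightly more explicit in justifying the equivalence of the two integral conditions in~\eqref{eq:iso-k-rigid} and in handling the spherical cross terms, but the argument is the same.
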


 Let us conclude this subsection with an example illustrating the possible complexity of the concept of $ \k$-pole:
 
   \begin{example}
  Assume that some $ \s:\mathbb{R}^{2}\to\mathbb{R}_{ + }$ satisfies $\s_{}(u)\sim u_{1}^{4}$ as $ u\to 0$.
  Let $ Q$ a non-zero polynomial in $ L^{2}(\s^{-1},B(0,\varepsilon ))$ for some $ \varepsilon >0.$ We put $ Q$ under the form $$ Q(u) =  P(u_{2}) + u_{1}R(u_{2}) + u_{1}^{2}S(u_{1},u_{2})$$ where $ P,R,S$ are polynomials with one or two arguments. We have $ u_{1}^{2}S(u)\in L^{2}(\s^{-1},B(0,1))$, hence so is $ P(u_{2}) + u_{1}R(u_{2})$. By
  isolating dominating terms of $ P$ and $ R$, there are exponents $ p,q\in \mathbb{N}$ and constants $ c_{P},c_{R}$ such that in polar coordinates
  \begin{align*}
\int_{ 0}^{\varepsilon }\left[
\int_{ 0}^{2\pi }\frac{(c_{P} \rho ^{p}\sin(\theta )^{p} + c_{R}\rho \cos(\theta )\rho ^{q}\sin(\theta )^{q})^{2}}{\rho ^{4}\cos(\theta )^{4}}d\theta
\right] \rho d\rho <\infty .
\end{align*}
Examining the point $ \theta  = \pi /2$ in the inner integral, it first implies  $ c_{P} = 0$ , and then $c_{R} = 0$, which means $ P = R = 0$.

  We indeed proved that for a polynomial $ Q = \sum_{}a_{\m}u^{\m}$ in $ L^{2}(\s^{-1},B(0,\varepsilon ))$, all terms of exponent $ \m = (0,m),m\in \mathbb{N}$ or $\m =  (1,m),m\in \mathbb{N}$ must vanish. This means that $ 0$ is a pole  of order $ (1,m)$ for all $ m\in \mathbb{N}$ (and a fortiori of order $ (0,m)$), and it does not admit other orders.
    \end{example}

 \subsection{Comparison with the literature}  

\begin{itemize}
\item 
 Let us first illustrate this result by deriving the two seminal results about number rigidity by  
 
  \begin{example}
 \label{ex:GP}
Let $ \P$ be either $ \P_{\textrm{Gin}}$, the infinite Ginibre ensemble or $ \P_{\textrm{GAF}}$, the zero set of the planar Gaussian analytic function (see  \cite{GP17} for precise definitions). $ \P$ is an isotropic process in dimension $ d = 2$ where  $ \C$ has a density decaying exponentially fast, hence 
\begin{align*}
\s_{}(u) = \s_{}(0) +\frac{ 1}{2} \|u\|^{2}\int_{ }\|t\|^{2}\C(dt) + O(\|u\|^{4}).
\end{align*}
 \cite{GP17} start from the facts that for a twice differentiable function  $ f\in \C_{c}^{b}(\mathbb{R}^{2})$, for some finite $ C,$
\begin{align*}
\var(I_{\P_{\textrm{Gin}}}(f))\leqslant &C\int_{ } \| \nabla f \| ^{2},\\
\var(I_{\P_{\textrm{GAF}}}(f))\leqslant & C\int_{ }| \Delta f|^{2}.
\end{align*}
Since $ \var(I_{\P}(f)) = \int_{ } | \hat f(u) | ^{2}\s(u)du$, this readily implies that in both cases $ \s_{}(0) = 0$ (hyperuniformity), hence by Theorem \ref{thm:main-krigid-iso} for $ \k = 0,$ $ \s_{}^{-1}$ has a $ 0$-pole in $ 0$ and both processes are number rigid. For $\P =  \P_{\textrm{GAF}}$, it also implies that $ \int_{ }\|t\|^{2}\C(dt) = 0$, hence 
\begin{align*}
\int_{ }\|u\|^{2}\s_{}^{-1}(u)du = \infty ,
\end{align*}
and $ \s_{}$ is isotropic. Using also Proposition  \ref{prop:k-incomp-iso} it has a pole of order $ 1$ in $ 0$,
hence $ \P_{\textrm{GAF}}$ is 1-rigid (also proved in  \cite{GP17}).

\end{example} 
 Up to date, $ \P_{\textrm{GAF}}$ and its ``toy model'' (\cite{ST1}) are to the author's knowledge  the only ``disordered'' stationary point  processes for which it is rigourously proven that $ \s_{}(u) = O(\|u\|^{4})$, finding higher order rigidity is a fascinating challenge. We give at Section  \ref{sec:flower} examples for which $ \s(u) \sim c\|u\|^{2n}$ for  $ n$ a prime number  which are not maximally rigid, but is in spirit close to a lattice. The process $ \P_{\textrm{Gin}}$ is an instance of the class of determinantal point processes, the general case is treated at Section \ref{sec:dpp}.\\
 
 \item 
For $ k = 0$, i.e. for number rigidity, our sufficient condition is
\begin{align}
\label{eq:buf}
\int_{B(0,\varepsilon ) }\frac{ 1}{\s_{}(u)}du = \infty .
\end{align}

Let us explain why it allows to recover the sufficient condition of \cite{GL-sufficient} in dimensions $ d = 1,2$. In dimension 1, they prove that a unit intensity point process $ \P$ is number rigid if the truncated correlation measure $ \rho ^{(2)}_{tr}$ has a density   $  \mathscr  c = \F(\s - 1)$ satisfying $ \int_{ } \mathscr  c = -1$ (hyperuniformity) and
\begin{align*}
 |  \mathscr  c(t)   | \leqslant c(1 +   | t | )^{-2},t\in \mathbb{R}.
\end{align*}
It  implies that $\s$ is Lipschitz in $ 0$   :
\begin{align*}
2 |\s (u) |  =   | \s (u) + \s (-u) |   
 = &\int_{ }(e^{itu} + e^{-itu})  \mathscr  c(t)dt + 2\int_{ }  \mathscr  c\\
\leqslant &\int_{ \mathbb{R}} | e^{itu} + e^{-itu}-2 |    |  \mathscr  c(t) |  dt\\
\leqslant & \int_{ }4c\frac{\sin(tu/2)^{2}}{(1 +  | t |)^{2} }dt\\
 = &4c | u | \int_{ }\frac{ \sin(z/2)^{2}}{( | z |  +  | u | )^{2}}dz\\
 \leqslant &C | u | \textrm{  with  }C= 4c\int_{ }\frac{ \sin(z/2)^{2}}{ | z | ^{2}}dz<\infty .
\end{align*}
Hence   \eqref{eq:buf} is satisfied.  In dimension $ 2$, with similar definitions, the condition is 
\begin{align*}
 |  \mathscr  c(t) | \leqslant c(1 + \|t\|)^{-4-\varepsilon }
\end{align*}
which gives that $ \|t\|^{2} \mathscr  c(t)\in L^{1}(\mathbb{R}^{2})$, hence $\s: =  \hat c + 1$ is twice differentiable in $ 0$, and by symmetry $ \s_{}'(0) = 0$, meaning $ \s_{}(u)\leqslant c\|u\|^{2}$, and  \eqref{eq:buf} holds again.\\

\item
 \cite{BufLinear} also give a sufficient condition, but depending on the structure factor of the discretised version, which is not the same as the
original one, see the discussion at Section \ref{sec:discrete}.\\

\end{itemize}
  \subsection{Necessary conditions}

We investigate what non-rigidity implies on the spectral density $ \s$, and in particular wether a $ \k$-pole is necessary for $ \k$-rigidity, it seems to be the first result of this kind. Maximal rigidity  really depends on the behaviour of $ \s$ on all $ \mathbb{R}^{d}$, and is rather studied in  \cite{rigid-companion}. We nevertheless give a context where maximal rigidity and $\k$-rigidity are easier to handle.

 Say that  $ u_{0}\in \mathbb{R}^{d}$ is a pole with finite order for $ \s^{-1}$ if   for some $ \varepsilon >0,q\in \mathbb{N},$
\begin{align*}\int_{ B(u_{0},\varepsilon )}\frac{ \|u-u_{0}\|^{2q}}{\s(u)}du<\infty .
\end{align*} 
 \begin{definition}
 Say that $ \s$ is simple if  $ \s_{}^{-1}$ has finitely many poles, all with finite order, and $ \s_{}(u)\geqslant c(1 + \|u\|)^{-p}$   for some $ p\in \mathbb{N},c>0$, at some distance $ \varepsilon>0 $ from these poles. \end{definition}  

\begin{proposition}
\label{prop:not-lmr-finite-z}
Assume that $ \s_{}$ is  {\it simple}. Then $ \M$ is not linearly maximally rigid on $ B(0,\eta )$ for $ \eta >0$. Furthermore, for $\k \in \mathbb{N}^{d},$ $ \M$ is linearly  $\k$-rigid on $ B(0,\eta )$  if and only if  $ 0$ is a pole of order  $\k$. 

\end{proposition}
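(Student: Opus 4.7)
My plan is to apply the characterization of linear rigidity in terms of entire functions of exponential type in $L^2(\s^{-1})$ (sketched in the introduction and made precise in Section~\ref{sec:theory}) and build suitable such functions by exploiting the simple structure of $\s$. Concretely, linear $\gamma$-rigidity on $B(0,\eta)$ fails iff there exists some entire $\psi$ with spectrum contained in $B(0,\eta)$, lying in $L^2(\s^{-1})$, such that $\langle \hat\gamma,\psi\rangle\neq 0$; for $\gamma_\k(t)=t^\k$ this pairing reduces up to a non-zero constant to $\partial^\k\psi(0)$, and maximal linear rigidity fails as soon as one such $\psi$ is not identically zero. The ``if'' direction of the $\k$-rigidity statement is Theorem~\ref{thm:main-krigid-iso}, so what remains is to exhibit, under the simple assumption, an appropriate counterexample~$\psi$.

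Enumerate the non-zero poles of $\s^{-1}$ as $u_1,\dots,u_n$, with finite orders $q_1,\dots,q_n$, and let $q_0$ denote the order at the origin (with $q_0=0$ if $0$ is not a pole). I start by picking a polynomial $Q=\sum a_\m u^\m$ locally in $L^2(\s^{-1})$ around $0$: for the failure of maximal rigidity, $Q(u)=u_1^{q_0}$ does the job by the finite-order hypothesis; for the converse in the $\k$-rigidity statement, the fact that $0$ is not a $\k$-pole furnishes $Q$ with $a_\k\neq 0$. I then seek $\psi=\hat\phi$ with $\phi\in\mathcal{C}_c^\infty(B(0,\eta))$ such that $\partial^\m\psi(u_j)=0$ for $|\m|<q_j$ and each $j=1,\dots,n$, and $\partial^\m\psi(0)=\partial^\m Q(0)$ for all $|\m|\leq N$, with $N\geq \max(q_0,\deg Q)$.

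Existence of such $\phi$ amounts to satisfying finitely many linear constraints of the form $\phi\mapsto\int t^\m e^{-2\pi i u_j\cdot t}\phi(t)\,dt=c_{\m,j}$, and these functionals are linearly independent on $\mathcal{C}_c^\infty(B(0,\eta))$ because the polynomial-exponential products $t^\m e^{-2\pi i u_j\cdot t}$ are themselves linearly independent (a non-trivial combination being real-analytic cannot vanish on an open set). Since $\mathcal{C}_c^\infty(B(0,\eta))$ is infinite-dimensional, such $\phi$ exists and can be chosen so that the prescribed values match $Q$, hence non-trivial. By Schwartz--Paley--Wiener, $\psi=\hat\phi$ is entire with spectrum in $B(0,\eta)$.

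The main technical step will be checking $\psi\in L^2(\s^{-1})$, which I split into three regions. Near each $u_j$, $\psi$ vanishes to order $q_j$, so $|\psi(u)|^2\leq C\|u-u_j\|^{2q_j}$ and local integrability against $\s^{-1}$ is precisely the finite-order assumption. Near $0$, I decompose $\psi=Q+R_N$ with $R_N(u)=O(\|u\|^{N+1})$ by Taylor's theorem (using analyticity and the matching of derivatives up to order $N$); both $\int_{B(0,\varepsilon)}|Q|^2\s^{-1}$ (by the choice of $Q$) and $\int_{B(0,\varepsilon)}|R_N|^2\s^{-1}\leq C\varepsilon^{2(N+1-q_0)}\int_{B(0,\varepsilon)}\|u\|^{2q_0}\s^{-1}$ are finite. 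Away from all poles, the polynomial bound $\s^{-1}(u)\leq c'(1+\|u\|)^p$ combined with the Schwartz decay of $\hat\phi$ handles the integral. Once this is established, $\partial^\k\psi(0)=\k!\,a_\k\neq 0$ is the obstruction to $\k$-rigidity, and the nonvanishing of $Q$ forces $\psi\not\equiv 0$, negating maximal linear rigidity.
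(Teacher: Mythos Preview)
Your argument is correct and takes a genuinely different route from the paper's. The paper builds the test function explicitly: it sets $P(u)=\prod_i\|u-u_i\|^{2q}$ and multiplies by a high power of the Bessel-type function $J_d(\eta u/M)$ to force polynomial decay at infinity, obtaining $\psi\in\E(B(0,\eta))\cap L^2(\s^{-1})$ via Lemma~\ref{lm:psi-to-tilde}; for the $\k$-rigidity converse it drops the factor at $0$ so that $\tilde\psi(0)\neq 0$ and then invokes the modular Lemma~\ref{lm:no-Q-rigid}, which handles the local patching near the origin with the polynomial witnessing that $0$ is not a $\k$-pole. You instead use an abstract interpolation: prescribe finitely many derivatives of $\hat\phi$ at the poles (zero to the required order at each $u_j$, matching $Q$ at the origin) and observe that the corresponding linear functionals on $\mathcal{C}_c^\infty(B(0,\eta))$ are independent because the polynomial-exponentials $t^\m e^{-iu_j\cdot t}$ are. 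This avoids the Bessel machinery entirely and gives Schwartz decay for free, making the integrability check at infinity immediate; the paper's approach, on the other hand, is more modular---Lemma~\ref{lm:no-Q-rigid} is reused verbatim in the isotropic and separable cases of Theorem~\ref{thm:converse-rigidity}, and it simultaneously yields the $\k$-tolerance statement of Proposition~\ref{thm:tolerant-simple} for general $\k$-polynomials $Q$, which your construction as written does not directly cover.
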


The proof is at  Section  \ref{sec:prf-converse}. This proposition in particular serves to treat stationary Determinantal Point Processes  as their structure factor vanishes only in $ 0$, see Section \ref{sec:dpp}.

Apart from the  {\it simple} case, we can  give converse statements under stronger structural assumptions: for $ \s_{}$  {\it isotropic} (i.e. $ \s_{}(u)$ only depends on $ \|u\|$) or  {\it separable} (i.e. $ \s_{}(u) = \s_{1}(u_{1})\dots \s_{d}(u_{d}),u\in \mathbb{R}^{d}$ for some even functions $ \s_{i}:\mathbb{R}\to \mathbb{R}_{ + }$) and by assuming upfront that maximal rigidity does not hold, which often can be proved by other means.  Remark  \ref{prop:rigid-monotone} yields that if rigidity does not hold for some $ \s'\leqslant \s_{}$, then it does not hold either for $ \s_{}$, hence in the general case one can investigate the largest separable/isotropic function $ \s'\leqslant \s_{}.$

 \begin{theorem}
  \label{thm:converse-rigidity}
Let $ \M_{}$ with spectral density $ \s$ with either  \begin{itemize}
\item $ \s_{}$ is isotropic and $ A = B(0,R),R>0$
\item  $ \s_{}$ is separable and $A =  [-R,R]^{d},R>0$. \end{itemize}
Assume $ \M$ is not maximally rigid on $ A$ and   \eqref{eq:ass-general-k-rigid} does not hold.
 Then for $ \eta >0$, $ \M$ is not $ {\k}$-rigid on $ A^{ + \eta }$.

  \end{theorem}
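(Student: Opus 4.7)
The plan is to apply the functional-analytic duality from Theorem~\ref{thm:heart-article}: $\M$ fails to be $\k$-rigid on $A^{+\eta}$ iff one can produce an entire function $\psi$ of exponential type whose spectrum lies in $A^{+\eta}$, with $\psi\in L^{2}(\s^{-1})$ and $\partial^{\k}\psi(0)\neq 0$. The two hypotheses furnish the two ingredients for such a $\psi$: non-maximal rigidity on $A$ gives, via the same duality, a nontrivial witness $\psi_{0}$ of exponential type with spectrum in $A$ and $\psi_{0}\in L^{2}(\s^{-1})$; the failure of~\eqref{eq:ass-general-k-rigid} (reading $\s^{-1}=1/\s$) yields $\int_{B(0,\varepsilon)}u^{2\k}/\s(u)\,du<\infty$ for some $\varepsilon>0$, so the monomial $u\mapsto u^{\k}$ lies in $L^{2}(\s^{-1};B(0,\varepsilon))$.

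The natural candidate is built by gluing a local factor near $0$ with the global carrier $\psi_{0}$. I would pick a Schwartz bump $\chi$ whose Fourier transform is supported in $B(0,\eta/2)$, with $\chi(0)=1$ and arbitrary polynomial decay; let $\m\preceq \k$ be a minimal multi-index such that $\partial^{\m}\psi_{0}(0)\neq 0$ (if such an $\m$ exists) and set $\psi(x)=x^{\k-\m}\chi(x)\psi_{0}(x)$. The spectrum of $\psi$ is contained in $A+B(0,\eta/2)\subset A^{+\eta}$ since multiplication by the polynomial $x^{\k-\m}$ does not enlarge Fourier supports; Leibniz' rule at $0$ gives $\partial^{\k}\psi(0)=(\k!/\m!)\chi(0)\partial^{\m}\psi_{0}(0)\neq 0$ thanks to the minimality of $\m$; and $L^{2}(\s^{-1})$-integrability follows from $x^{\k-\m}\chi$ being globally bounded combined with $\psi_{0}\in L^{2}(\s^{-1})$. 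If no such $\m\preceq \k$ exists (i.e.\ $\psi_{0}$ vanishes with all derivatives of order $\preceq \k$ at $0$), one falls back on the pure bump construction $\psi(x)=x^{\k}\chi(x)Q(x)$, where local integrability at $0$ is granted by the failure of~\eqref{eq:ass-general-k-rigid} and $Q$ is a polynomial chosen to vanish at the zeros of $\s$ away from $0$ (so that $\psi\in L^{2}(\s^{-1})$ globally) while $Q(0)\neq 0$ (so that $\partial^{\k}\psi(0)=\k!\,Q(0)\neq 0$). Under \emph{isotropy}, the zero set of $\s$ outside $\{0\}$ is a union of spheres $\{\|u\|=r_{j}\}$ and $Q$ takes the form $\prod_{j}(\|x\|^{2}-r_{j}^{2})^{m_{j}}$; under \emph{separability}, $\s=\prod_{i}\s_{i}$ and $L^{2}(\s^{-1})$ factors as a Hilbert tensor product $\bigotimes L^{2}(\s_{i}^{-1})$, so the whole construction tensorises and reduces to the one-dimensional Kolmogorov-Krein problem in each coordinate, with $1$D witnesses $\psi_{0,i}$ extracted from a tensor expansion of $\psi_{0}$.

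The main obstacle is the $L^{2}(\s^{-1})$-integrability of $\psi$ at the zero set of $\s$ away from $0$: $\psi$ must vanish there to sufficient order to compensate the blow-up of $\s^{-1}$, while preserving both the spectrum support within $A^{+\eta}$ and the nonvanishing of $\partial^{\k}\psi(0)$. The isotropy or separability hypothesis, together with the finiteness conditions implicit in the listed cases, is precisely what makes the geometry of the zero set tractable enough to perform this gluing via polynomial multipliers without enlarging the spectrum. Without such structure, the combinatorics of the zero set preclude a general converse, as already illustrated by Example~\ref{ex:order-is-complex}.
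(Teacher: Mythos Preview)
Your Case 1 is sound (and in fact does not use isotropy or separability at all). The gap is Case 2: when every $\partial^{\m}\psi_0(0)$ with $\m\preceq\k$ vanishes, you discard $\psi_0$ and propose $\psi(x)=x^{\k}\chi(x)Q(x)$ with $Q$ a polynomial vanishing on the zeros of $\s$ away from $0$. But the theorem carries no finiteness hypothesis on those zeros. Under isotropy the zero set may consist of infinitely many spheres, or spheres of infinite order as poles of $\s^{-1}$, and $\s$ may also decay at infinity; no polynomial $Q$ can absorb all of this, and $\chi$ only provides decay at infinity, not cancellation at interior zeros. Your closing remark about ``finiteness conditions implicit in the listed cases'' is a misreading: the finitely-many-finite-order-poles situation is Proposition~\ref{prop:not-lmr-finite-z}, not this theorem, whose only structural inputs are isotropy or separability plus non-LMR. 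Likewise, for the separable case, ``$L^2(\s^{-1})$ factors as a tensor product and $\psi_0$ has a tensor expansion'' is too loose to extract one-dimensional witnesses; $\psi_0$ is not a pure tensor.

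The paper never abandons $\psi_0$. In the isotropic case it first multiplies $\psi_0$ by its own leading polynomial $P$ and a $J_d$-damping so that $\psi_1\sim P^{2}\geqslant 0$ near $0$, then takes the rotational average $\psi(z)=\int_{O(d)}\psi_1(\theta z)\,d\theta$; isotropy of $\s$ keeps $\psi\in L^{2}(\s^{-1})$, and positivity of $P^2$ prevents the average from vanishing identically. Now $\psi$ is isotropic analytic, so its germ at $0$ is $\kappa\|u\|^{2q}(1+o(1))$; dividing by $\|u\|^{2q}$ yields $\tilde\psi\in\E(A^{+\eta})$ with $\tilde\psi(0)\neq 0$ and $\tilde\psi\in L^{2}(\s^{-1};B(0,\varepsilon)^{c})$, and Lemma~\ref{lm:no-Q-rigid} supplies the local $u^{\k}$ factor from the failure of~\eqref{eq:ass-general-k-rigid}. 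In the separable case the analogous symmetrisation is coordinate \emph{slicing}: fix all but the $i$-th variable at a well-chosen $u^{i}$ to get a one-variable entire function $\psi_i^{u^{i}}\in L^{2}(\s_i^{-1})\setminus\{0\}$, divide out its leading power $z^{q_i}$, and take the product $\tilde\psi(z)=\prod_i\psi_i(z_i)$. The missing idea in your argument is precisely this symmetrisation step, which forces the behaviour of the witness at $0$ into a canonical form that can be divided out, without any control on the zero set of $\s$ elsewhere.
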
  

Proof at Section  \ref{sec:prf-cvs-main}  .

\begin{remark}
Under these hypotheses, if it can be established by other means that $ \M$ is linearly 0-rigid or 1-rigid, one can reverse the reasoning of Example  \ref{ex:GP} and deduce bounds on the variance of linear statistics similar to those of $ \P_{\textrm{GAF}}$ and $ \P_{\textrm{Gin}}$.
\end{remark}

In view of Proposition  \ref{prop:not-lmr-finite-z}, Theorems \ref{thm:main-krigid-iso} and  \ref{thm:converse-rigidity},  \eqref{eq:ass-general-k-rigid} is necessary and sufficient for $ \k$-rigidity on $ A$ in the following cases \begin{itemize}
\item   $ \s_{}$ is ``simple'', 
\item $ \M$ is not LMR and $ \s_{}$ is quasi-isotropic, i.e. there is $ c_{-},c_{ + }>0$ such that 
\begin{align*}
c_{-}\s_{}(\|u\|)\leqslant \s_{}(u)\leqslant c_{ + }\s_{}(\|u\|),u\in \mathbb{R}^{d},
\end{align*}
\item $ \M$ is not LMR  and $ \s_{}$ is quasi-separable, i.e. $ 0\leqslant c_{-}\s'\leqslant \s_{}\leqslant c_{ + }\s' $ for some separable $ \s'$.
\end{itemize} 

 \begin{corollary}[No phase transition]
 For $ \M$ which is not LMR on any convex body, linear number rigidity  happens either on every  convex body or none. 
     \end{corollary}
%
  On the other hand, we give in  \cite{rigid-companion} examples of Gaussian measures which are maximally rigid for  $ R\leqslant R_{c}$ for some $ R_{c}>0$, and not even $ 0$-rigid for $ R>R_{c}$.
  There are examples of processes obtained as  (perturbations of) hyperplane networks, such as in  \cite{KLY,PS14}, where number rigidity occurs in dimension $ d = 2,3,...$, but it does not enter in the current framework as it can be proved that this  rigidity is non-linear, and the tools employed strongly depend on the underlying network structure. We are not aware of models which are rigid but not linearly rigid and not derived from a network construction. We think that for some continuous random fields, the set of zeros is rigid but not linearly, this will be the object of future research.\\

  The structural conditions on $ \s$ are necessary as there exists random measures which are linearly $ \k$-rigid without $ \s$ vanishing around $ 0.$ We give an example for mass rigidity.
   
 \begin{proposition}\label{ex:spectral-counterexample}
 Any WSS random measure  on $ \mathbb{R}^{2}$ which spectral density satisfies $ \s(u) \leqslant c u_{2}^{2}$  in some neighbourhood of $ (1,0)$ is  $ 0$-rigid on any convex body $ A$.
 \end{proposition}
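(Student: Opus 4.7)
The plan is to reason by contradiction through the duality sketched in the introduction and made precise by Theorem~\ref{thm:heart-article}: if $\M$ fails to be mass rigid on some bounded convex body $A$, then there exists a non-zero entire function $\psi\colon\mathbb{C}^d\to\mathbb{C}$ of exponential type with Fourier support contained in $A$, satisfying $\psi\in L^2(\s^{-1})$ and $\psi(0)\neq 0$. The aim is to show that the prescribed behaviour of $\s$ near $(1,0)$ precludes the existence of such a $\psi$.

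First I would localise the integrability condition around $(1,0)$. On a box $V=(1-\varepsilon,1+\varepsilon)\times(-\varepsilon,\varepsilon)$ where $\s(u)=u_2^2$, the global integrability $\int\psi^2 \s^{-1}<\infty$ forces $\int_V\psi(u)^2 u_2^{-2}\,du<\infty$. Fubini applied to the non-negative integrand yields, for almost every $u_1\in(1-\varepsilon,1+\varepsilon)$, finiteness of
\begin{equation*}
\int_{-\varepsilon}^{\varepsilon}\frac{\psi(u_1,u_2)^2}{u_2^2}\,du_2.
\end{equation*}
Since $\psi$ is continuous (being entire), if $\psi(u_1,0)\neq 0$ then near $u_2=0$ the integrand would be bounded below by a positive multiple of $u_2^{-2}$, which is non-integrable. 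Hence $\psi(u_1,0)=0$ for almost every $u_1$ in that interval.

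Next comes the analytic continuation step. The slice $z_1\mapsto\psi(z_1,0)$ is an entire function on $\mathbb{C}$ as a coordinate section of the entire function $\psi$ on $\mathbb{C}^d$, and its real zero set contains a subset of $(1-\varepsilon,1+\varepsilon)$ of positive Lebesgue measure, in particular an accumulation point. The one-variable identity theorem then forces $\psi(\,\cdot\,,0)\equiv 0$ on $\mathbb{C}$. Evaluating at the origin yields $\psi(0)=\psi(0,0)=0$, contradicting the defining property of $\psi$, so $\M$ is indeed mass rigid.

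The main conceptual hurdle is really to produce the entire function $\psi$ with all the listed analytic and integrability properties from the failure of linear mass rigidity — this is exactly the content of Theorem~\ref{thm:heart-article}, itself based on the Schwartz-Paley-Wiener theorem. Once $\psi$ is in hand the argument is very short: Fubini propagates the local zero of $\s$ at $(1,0)$ into a zero of $\psi$ along an open segment of the line $\{u_2=0\}$, and one-variable analytic continuation carries that vanishing all the way to the origin. This also clarifies why Theorem~\ref{thm:converse-rigidity} must impose structural assumptions on $\s$: zeros of $\s$ located far from $0$ can enforce rigidity by a non-local mechanism, even when $\s$ is bounded below near the origin, so a purely local condition at $0$ cannot be necessary in full generality.
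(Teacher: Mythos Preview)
Your proof is correct and follows essentially the same route as the paper: both invoke Theorem~\ref{thm:heart-article} to reduce mass rigidity to showing that any $\psi\in\E(A)\cap L^{2}(\s^{-1})$ satisfies $\psi(0)=0$, then use the non-integrability of $u_{2}^{-2}$ near $(1,0)$ to force $\psi$ to vanish along a segment of $\{u_{2}=0\}$, and conclude via the identity theorem for the one-variable slice $z_{1}\mapsto\psi(z_{1},0)$. Your Fubini argument is in fact a touch more explicit than the paper's version, which simply asserts the vanishing from the divergence of $\int\s^{-1}$ over each small ball centred on the segment.
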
 
 Proof at Section  \ref{sec:prf-counterex}.
 We can therefore build examples which are completely standard, i.e. with fast decay of the correlations, linear variance, and which are rigid; by the previous results, such examples will never be isotropic or separable. To give a concrete example, take a mixture of an isotropic and a separable density
\begin{align*}
\s(u) = 
\begin{cases} 
1$  if $\|u\|\leqslant 1/2\\
 \frac{  u_{2}^{2}}{1 + u_{2}^{10}} \frac{ 1}{1 + u_{1}^{10}}$  otherwise$, \end{cases}
\end{align*} not LMR because $\psi (u) :=  u_{2}(1 + \|u\|^{100})^{-1}\in L^{2}(\s^{-1}).$ To  {\it realise} $ \s$, i.e. find $ \M$ for which such $ \s$ is indeed the spectral density, use for instance Gaussian fields with Example \ref{ex:gauss}. Whether there are such number rigid point processes with standard second order behaviour which are not hyperuniform  depends on our ability to have such spectral densities for random atomic measures, this relates to the generally difficult problem of  {\it realisability of point processes}.

%
%
%
%

 \subsection{$\k$-tolerance}
 
 \cite{GP17} show that  the infinite Ginibre ensemble $ \P_{\textrm{Gin}}$ is not rigid at an order higher than $ 0$, in the sense that if one determines $ N : = \P_{\textrm{Gin}}(A)$ from $ \P_{\textrm{Gin}}1_{A^{c}}$, then conditionnally to $ \P_{\textrm{Gin}}1_{A^{c}}$ and $ N$, $ \P_{\textrm{Gin}}1_{A}$'s law is continuous wrt $  {\bf U_{N}} = \{U_{1},\dots ,U_{N}\}$, consisting of i.i.d.   uniform variables  on $ A.$   They similarly show that the zero set $ \P_{\textrm{GAF}}$ of the planar GAF is $ 1$-rigid and not more, in the sense that conditionnally to $$ (N,U,\Phi ): = (   \P_{\textrm{GAF}}(A),  \int_{ }xd\P_{\textrm{GAF}}(x), \P_{\textrm{GAF}}1_{ A^{c}}),$$ $ \P_{\textrm{GAF}}1_{A}$ has a continuous law wrt to the law of $  {\bf U}_{N}$ conditionned to $ \sum_{i = 1}^{N}U_{i} = U.$ A similar result is proved by  \cite{DHLM} for the sine$ _{\beta }$ processes $ \P_{\beta },\beta >0$.
 
 These strong distributional results rely on the determinantal nature of $ \P_{\textrm{Gin}}$, the Gaussian nature of $ \P_{\textrm{GAF}}$, or the DLR equations defining $ \P_{\beta }$, and cannot be extended in the current general context.
We can still investigate how ``linearly tolerant'' is a process which is not rigid, i.e. whether the random moments $ m_{\m}(A): = \int_{A }t^{\m}d\M(t),\m\in \mathbb{N}^{d}$ are tied by a finite linear relation 
\begin{align*}
\sum_{\m}a_{\m}m_{\m}(A) \in \sigma _{lin}(\M_{A^{c}})
\end{align*}
or equivalently if we have $ Q$-rigidity on $ A$ for the polynomial $ Q(t) =\sum_{\m}a_{\m}t^{\m} $. We show that this holds for what we call a  {\it $ \k$-polynomial} $ Q$, i.e. such that for some $ \k_{0}\succeq \k$, $ a_{\k_{0}}\neq 0$ and the non-vanishing terms $ a_{\m}\neq 0$ are such that $ \m\preceq \k_{0}.$

  \begin{proposition}
  \label{thm:tolerant-simple}
   Assume that $ 0$ is not a pole of order $ \k$ and  the assumptions of Theorem  \ref{thm:converse-rigidity} or of Proposition \ref{prop:not-lmr-finite-z} hold. Then $ \M$ is not $ Q$-rigid for any $ \k$-polynomial $ Q$. \end{proposition}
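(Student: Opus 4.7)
The plan is to exploit the dual characterisation of linear rigidity developed in Section~\ref{sec:theory}: $\M$ fails to be linearly $Q$-rigid on the appropriate observation window (i.e.\ $A^{+\eta}$, as in Theorem~\ref{thm:converse-rigidity}) if and only if there exists an entire function $\psi$ of exponential type with Fourier-spectrum in $A^{+\eta}$, $\psi\in L^2(\s^{-1})$, and
\[
L_Q(\psi):=\sum_{\m\preceq \k_0}a_\m\, C_\m\,\partial^\m\psi(0)\neq 0,
\]
where $C_\m\neq 0$ are the non-zero constants coming from $\widehat{t^\m}$. I would construct $\psi$ as a monomial times a Schwartz function with vanishing moments, refining the construction already used in Proposition~\ref{prop:not-lmr-finite-z} and Theorem~\ref{thm:converse-rigidity}.

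First, since $\k\preceq \k_0$, the contrapositive of the monotonicity ``a $\k_0$-pole is a $\k$-pole'' forces that $0$ is not a $\k_0$-pole. Under each structural assumption admitted here, this non-pole condition is equivalent to $u^{\k_0}\in L^2(\s^{-1};B(0,\varepsilon))$ for some $\varepsilon>0$: in the isotropic case $\int u^{2\k_0}\s^{-1}du$ reduces by spherical symmetry to $c_{\k_0}\int_0^\varepsilon r^{2|\k_0|+d-1}\s^{-1}(r)\,dr$ with $c_{\k_0}>0$; in the separable case it factorises across coordinates; and in the simple case it follows from the finite-order pole structure of $\s^{-1}$ at $0$.

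Next, I would pick a smooth cutoff $\chi\in\mathcal{C}_c^\infty(\mathbb{R}^d)$, with support in (a suitable reflection of) $A^{+\eta}$, such that $\int\chi=1$ and $\int x^\m\chi(x)\,dx=0$ for $1\leq|\m|\leq 2|\k_0|$; such $\chi$ is standard. By Fourier duality $\hat\chi(u)=1+O(|u|^{2|\k_0|+1})$ near $0$. Set $\psi(u):=u^{\k_0}\hat\chi(u)$. Then: (i) by Paley-Wiener, $\psi$ is entire of exponential type with Fourier-spectrum contained in $A^{+\eta}$, since multiplication by $u^{\k_0}$ corresponds to derivatives of $\chi$ in the physical domain; (ii) its Taylor expansion at the origin reads $\psi(u)=u^{\k_0}+O(|u|^{3|\k_0|+1})$, so $\partial^\m\psi(0)=\k_0!\,\delta_{\m,\k_0}$ for every $\m\preceq\k_0$; (iii) $\psi\in L^2(\s^{-1})$, because near $0$ one has $|\psi(u)|^2\leq \|\hat\chi\|_\infty^2\,|u^{\k_0}|^2$, integrable against $\s^{-1}$ by Step~1, while away from $0$ the rapid Schwartz decay of $\hat\chi$ dominates the moderate growth of $\s^{-1}$ guaranteed by the structural hypotheses.

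Plugging $\psi$ into $L_Q$ collapses the sum to a single surviving term, yielding $L_Q(\psi)=a_{\k_0}\,C_{\k_0}\,\k_0!\neq 0$, which establishes non-$Q$-rigidity. The main technical obstacle is the global $L^2(\s^{-1})$-integrability in (iii), especially in the simple setting where $\s^{-1}$ may have additional finite-order poles away from $0$; this is the same subtlety already handled in the proof of Proposition~\ref{prop:not-lmr-finite-z}, and can be absorbed by shrinking $\textrm{supp}\,\chi$ or by multiplying $\psi$ by a further Schwartz factor whose Fourier transform vanishes near the problematic frequencies while preserving the Taylor coefficients of $\psi$ at $0$ up to order $|\k_0|$.
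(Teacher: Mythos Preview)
Your approach has two genuine gaps.

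\textbf{The monomial reduction fails in the simple case.} The claimed equivalence ``$0$ not a $\k_0$-pole $\Leftrightarrow u^{\k_0}\in L^2(\s^{-1};B(0,\varepsilon))$'' is false for simple $\s$. Take $d=2$ and $\s(u)=(u_1-u_2)^2+(u_1+u_2)^{100}$ near $0$, extended to be bounded below away from $0$. This $\s$ is simple (single zero at $0$, of finite order $q_0=25$). The polynomial $Q(u)=u_1-u_2$ satisfies $|Q|^2/\s\leqslant 1$, so $Q\in L^2(\s^{-1};B(0,\varepsilon))$ with $a_{(1,0)}=1$; hence $0$ is not a $(1,0)$-pole. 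Yet $u_1\notin L^2(\s^{-1};B(0,\varepsilon))$: in coordinates $v=u_1-u_2,\ w=u_1+u_2$ one has $u_1=(v+w)/2$ and the contribution $\int w^2/(v^2+w^{100})\,dv\,dw$ diverges like $\int w^{-48}\,dw$. Your ``finite-order pole structure'' remark does not help, since $q_0=25\gg|\k_0|=1$. The definition of a $\k$-pole allows an \emph{arbitrary} witnessing polynomial $P$, and the paper's Lemma~\ref{lm:no-Q-rigid} works directly with this general $P$ (after the harmless reduction that $P$ has no term $\prec\k$), then computes $\partial^{Q}(P\tilde\psi)(0)$ via the identity $\partial^{\m}(u^{\k'}\tilde\psi)(0)=\delta_{\m=\k'}\m!\,\tilde\psi(0)$ for $\k'\not\prec\m$.

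\textbf{Global $L^2(\s^{-1})$ is not established in the isotropic/separable case.} There the only global input is the non-LMR hypothesis, i.e.\ the existence of some $\psi_0\in L^2(\s^{-1})\cap\E(A)\setminus\{0\}$. No ``moderate growth of $\s^{-1}$'' is guaranteed: $\s$ may have infinitely many zeros or $\s^{-1}$ may grow super-polynomially, and your $\psi=u^{\k_0}\hat\chi$ has no reason to lie in $L^2(\s^{-1})$ away from $0$. You never use $\psi_0$. The paper's route (Section~\ref{sec:prf-cvs-main}) is precisely to start from $\psi_0$ and manufacture from it an isotropic (via rotational averaging) or separable (via coordinate restriction) function $\tilde\psi\in L^2(\s^{-1};B(0,\varepsilon)^c)\cap\E(A^{+\eta})$ with $\tilde\psi(0)\neq 0$; the global $L^2$ control is inherited from $\psi_0$, and only then does one multiply by $P$ and a Bessel damping factor $J_d^M$ to secure the local behaviour at $0$. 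Your proposed patch of ``multiplying by a further Schwartz factor vanishing near the problematic frequencies'' presupposes finitely many such frequencies, which is exactly what fails outside the simple case.
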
  
   
The proof of this proposition is actually included in the proofs of resp.   Proposition \ref{prop:not-lmr-finite-z} and Theorem \ref{thm:converse-rigidity}.
As is apparent from the proofs, we will in fact have no $ Q$-rigidity for almost all polynomials having at least a non-vanishing term of order $ \k_{0}\succeq \k$, i.e. for a linear combination $ Q = \sum_{i}\alpha _{i}Q_{i}$ where the $ Q_{i}$ are $ \k$-polynomials, but some particular combinations of $ \alpha _{i}$ are harder to discard.

\subsection{Point processes with arbitrary rigidity}
\label{sec:flower}

For systems which have a non-zero spectral measure, the highest order of  rigidity  that is encountered in popular models is the $ 1$-rigidity of the zeros of the planar GAF, see  \cite{GP17}, it corresponds to a structure factor decaying in $ 0$ as $ \|u\|^{4}$ in dimension $ d = 2$ (see Example  \ref{ex:GP}).  \cite{ST1} provide a toy model with a simpler structure that exhibits similar properties, in particular for the rigidity order and decay of the structure factor.  \cite{torquato-tilings} give a heuristic for building point processes with structure factor decaying arbitrarily fast. 

We give here   in dimension $ 2$ a class of simple models presenting a arbitrarily high order of rigidity. 
Let $ \rho >0$ and $ n\in \mathbb{N}^{*}$. Define $ \P_{\rho ,n}$ as the process obtained by putting at distance $ \rho $ of each point $ \m$ of $ \mathbb{Z} ^{d}$ the $ n$-th order roots of unity rotated by a random independant quantity $ \theta _{\m}$, and applying a global stationarising shift $ U$:\begin{align*}
\P_{\rho  ,n} = \sum_{\m\in \mathbb{Z} ^{d}}\sum_{z\in  {\bf U}_{n}}\delta _{U + \m + \rho e^{i\theta _{\m}}z}
\end{align*}
where $  {\bf U}_{n} = \{e^{2\pi \imath k/n};k = 1,\dots ,n\}\subset  \mathbb C $, $ U$ is an independent uniform random variable in $ [0,1]^{2}$, $ \theta _{\m}$ are i.i.d.   uniform numbers on $ [0,2\pi ]$. The structure factor $ \S$ is defined as before by assimilating $  \mathbb C $ to $ \mathbb{R}^{2}$.

\begin{theorem} \label{thm:flower}
 For $ n\geqslant 3$ a prime number,    $$ \S(u)\sim \frac{\rho ^{2n}}{(n-1)!^{2}}u^{2n}$$ as $ u\to 0,u\in  \mathbb C .$ Therefore, $ \P_{\rho   ,n}$ is $ (n-1)$-rigid on any convex body $ A\subset  \mathbb C $.
 \end{theorem}  
 
 Most likely, if $ n$ is not prime, the order of decay will be determined by the smallest prime divisor of $ n.$
 Any other base model with a spectral gap instead of $ \mathbb{Z} ^{d} + U$ (i.e. a  {\it stealthy process}) would give the same results.
 
For the proof, we establish a general formula for the structure factor of a unit intensity stationary point process $ \Phi \subset \mathbb{R}^{d}$ perturbed by an arbitrarily cluster of points. 
 Let $ \c  _{0} = \sum_{i = 1}^{N}\delta _{X_{i}}$ a random finite point process with law $ \mu $, where $ N\in \mathbb{N}$ and the $ X_{i}\in \mathbb{R}^{d},i\geqslant 1,$ are random.   The average number of points is denoted by $ \kappa  = \mathbf{E}(N)$.
Let $ \c  _{\m},\m\in \Phi $ i.i.d.    copies of $ \c  _{0}$ (cond. to $ \Phi $) and
\begin{align*}
\Phi _{  \mu  }  :=\sum _{\m\in \Phi  }\tau   _{\m} \c  _{\m}
\end{align*}
where  points are counted according to multiplicity.
We say that $ \Phi $ is pertubed by a cluster with  {law} $ \mu  $.
Remark that $ \Phi_{\mu }   $ has intensity $$\mathbf{E}(\Phi_{\mu }(A)) =   \kappa \Leb(A),A\subset \mathbb{R}^{d},$$ with possibly multiple points. 

\begin{proposition} 
\label{lm:flower}
Let $ \Phi $ a $ L^{2}$ wide sense stationary  unit intensity point process with structure factor $ \S_{\Phi }$ and $ \mu $ such that $ \mathbf{E}(\#\c_{0}^{2})<\infty .$
Let $ \varphi (u)= \F\c_{0} (u)= \int_{ }e^{-\imath \langle u,t \rangle}\c_{0}(dt)$. Then the structure factor of $ \Phi_{\mu }$ is 
\begin{align*}
\S(du) = ( \mathbf{E} | \varphi (u) | ^{2}- | \mathbf{E}\varphi (u) | ^{2})\Leb(du) +  |\mathbf{E}\varphi (u) | ^{2}\S_{\Phi } (du),u\in  \mathbb C .
\end{align*}
\end{proposition}

This result, of independent interest, is proved at Section  \ref{sec:flower-prf}. It generalises and strenghtens the result for lattices  perturbed by i.i.d.   points when a.s. $ \c_{1}(\mathbb{R}^{d}) = 1$  (\cite[Th. 9.3]{Coste}). 

 \begin{proof}[Proof of Theorem  \ref{thm:flower}]
 
In the case of $ \P_{\rho   ,n}$, $$ \Phi  = \sum_{\m\in \mathbb{Z} ^{d}}\delta _{U + \m},\;\; \c_{0} = \sum_{k = 1}^{n}\delta _{\rho e^{\imath k \alpha  + \theta }}$$ where $\alpha  = 2\pi /n$  and $ \theta $ is a random element of $  [0,2\pi ]  $.  
We define for $ u\in  \mathbb C,\theta \in [0,2\pi ] $, 
\begin{align*}\varphi _{\theta }(u) = &\sum_{k = 1}^{n}\exp(-i \langle e^{i(k\alpha  + \theta )},u \rangle),
\end{align*}
so that $ \varphi(u)  = \varphi _{\theta }(\rho u)$ where $ \theta $ is random. Since the law of $ \c_{0}$ is invariant under rotations it is enough to compte $ \varphi (u)$ for $ u\in \mathbb{R}.$
We use the Taylor expansion with the Lagrange form of the remainder: for $ u\in \mathbb{R}$  
\begin{align*}
\varphi _{\theta }(u) = \sum_{p = 0}^{2n }\frac{ (-iu)^{p}}{p!}  \sum_{k = 1}^{n} \cos(k\alpha + \theta) ^{p} + u^{2n + 1}R_{n}(u,\theta )
\end{align*}where $R_{n}(u,\theta ) = O(\|\varphi _{\theta }^{(2n + 1)}\|_{\mathbb{R}}) $   is uniformly bounded over $ u,\theta \in \mathbb{R}$. We also use the fact that for some real coefficients $ \lambda _{m,p}$ with $ \lambda _{p,p} = 1$,  for $ \gamma \in \mathbb{R},$
\begin{align*}
 \cos(\gamma  )^{p} =&  \sum_{m = 0}^{p}\lambda _{m,p}\cos(m\gamma )\\
 \sum_{k = 1}^{n}\cos(k\alpha  + \theta )^{p}  = &  \sum_{m = 0}^{p}\lambda _{m,p}   \sum_{k = 1}^{n}\cos(m(k\alpha  + \theta )).
\end{align*}For $ p\in \{1,\dots ,2n\}$,
the terms $ m \notin \{0,n,2n\}$ vanish because $$\sum_{k = 1}^{n}\cos(m(k\alpha  + \theta )) =  \mathscr  R \sum_{k = 1}^{n} e^{imk\alpha }e^{im\theta } =   \mathscr  R\left(
e^{im\theta }    \sum_{k = 1}^{n}e^{imk\alpha } 
\right)= 0,$$ because $ m$ is prime with $ n$. We have  for $ p\leqslant 2n$
\begin{align*}
\sum_{k = 1}^{n}\cos(k\alpha  + \theta )^{p} = &  n\lambda _{0,p} + {\mathbf 1}\left[ n\leqslant p  \right] n\lambda _{n,p}\cos(n\theta ) + \mathbf{1}_{\{p = 2n\}}n\cos(2n\theta ).
\end{align*}
 Finally,
\begin{align*}
 \varphi _{\theta }(u) = \sum_{p = 0 }^{2n}\underbrace{\frac{ (-i)^{p}}{p!}n\lambda _{0,p}}_{c_{p}}u^{p} +\cos(n\theta )\sum_{p = n}^{2n}\underbrace{\frac{ (-i)^{p}}{p!}n\lambda _{n,p}}_{a_{p}}u^{p}  + \frac{(-iu)^{2n}}{(2n)!}n\cos(2n\theta )+ u^{2n + 1}R_{n}(u,\theta ).
\end{align*} 
From that we can deduce the expansion of $ |  \varphi _{\theta }(u) | ^{2} = \sum_{l = 0}^{2n}\gamma _{l}(\theta )u^{l} + u^{2n + 1}S_{n}(u,\theta )$, where $ S_{n}(u,\theta )$ is uniformly bounded for the same reasons. Hence we have\begin{samepage}
\begin{align*}
 \mathbf{E} | \varphi  (u) | ^{2}=&  \sum_{\substack{p,q = 0\\p + q\leqslant  2n}}^{2n}c_{p}  \bar c_{q} (\rho u)^{p + q} 
 +   \mathbf{E}(\cos(n\theta )^{2})(\rho u)^{2n} | a_{n} | ^{2}\\
 &+ \mathbf{E}(\cos(n\theta ))\sum_{p = 0}^{2n}\sum_{q =n }^{2n-p}2  \mathscr  Rc_{p}  \bar a_{ q}   \rho ^{ p + q}u^{ p + q}  +\mathbf E (\cos(2n\theta ))2c_{0} \frac{(-iu)^{2n}}{(2n)!}n+ (\rho u)^{2n + 1}S_{n}(\rho u,\theta )
\\ | \mathbf{E}\varphi (u) | ^{2}  = & \left|
\sum_{p = 0 }^{2n}c_{p}   \rho ^{p  }u^{p }  +  (\rho u)^{2n  + 1}\mathbf{E}(R_{n}(\rho u,\theta ))
\right|^{2} =  \sum_{\substack{p,q = 0\\p + q\leqslant  2n}}^{2n}c_{p}  \bar c_{q} \rho ^{p + q}u^{p + q} 
 + O(u^{2n + 1})
\end{align*}\end{samepage}
using $ \mathbf{E}(\cos(n\theta )) =\mathbf E (\cos(2n\theta )) =  0$,
whence indeed, as claimed,
 $$ \mathbf{E} | \varphi  (u) | ^{2}- | \mathbf{E}\varphi (u) | ^{2} =  \frac{n^{2}}{n!^{2}}(\rho u)^{2n}   + O(u^{2n + 1}),$$ and this expansion is also valid on $  \mathbb C .$ The conclusion comes from the fact that  $ \S_{\Phi } = \sum_{\m\in \mathbb{Z} ^{d} \setminus \{0\}}\delta _{\m}$ (\cite{Coste}).
The $ (n-1)$-rigidity   is then a straightforward consequence of  Proposition \ref{prop:k-incomp-iso}. 
%
 \end{proof}

 \begin{remark}[Higher dimensions]
 
 It is possible to lift this example to higher dimensions in a rather artificial way: attach to each point of $ \mathbb{Z} ^{d} + U$ a cluster   which points are the $  (u_{1},u_{2},0,\dots ,0)$  where $ (u_{1},u_{2})$ is an  atom of an independent version of $ \c_{0}$.   \cite{torquato-tilings} give constructions yielding high order decay in dimension $ 3.$
 

 \end{remark}

\subsection{Rigidity for discrete processes}
\label{sec:discrete}

Let us  derive similar results for a centred discrete process $ \X = \{\X_{\m};\m\in \mathbb{Z} ^{d}\}$ with the linear statistics \begin{align*}
\X(\gamma ) := \sum_{\m}\X(\m)\gamma (\m).
\end{align*}
%
We assume
$ \X$ is wide-sense $ L^{2}$-stationary with spectral measure $ \S$, i.e. for $ \gamma $ with bounded support
\begin{align*}
 \textrm{Var}\left(\X(\gamma )\right) = (2\pi )^{-d}\int_{  \mathbb  T  ^{d}}  | \hat \gamma  (u)| ^{2}\S(du)
\end{align*}
where $  \mathbb  T   = [-\pi ,\pi ]$,
for the complex-valued  trigonometric polynomial  
\begin{align*}
\hat \gamma (u) = \sum_{\m\in \mathbb{Z} ^{d}}e^{-\imath \langle \m, u\rangle}\gamma (\m).
\end{align*}
For $ A\subset \mathbb{Z} ^{d}$, the class of such trigonometric polynomials where $ \gamma (\m) = 0$ for $\m\notin A$  is denoted by $ \E( A)$, by analogy with entire functions of exponential type (see Section \ref{sec:theory}).
\begin{theorem}~
\label{thm:discrete}
Let   $ \s(u)du$ the continuous component of $ \S$.
\begin{itemize}
\item 
$ \X$ is not LMR on $ \llbracket m \rrbracket^d $  if and only if  there is  $ \psi\in \E(\llbracket m \rrbracket^d ) $ such that 
\begin{align}
\label{eq:lmr-poly-discrete}
\int_{ }\frac{ |  \psi (u) | ^{2}}{\s_{}(u)}du<\infty .
\end{align}
\item For $ \k\in \mathbb{N}^{d}$, $ \X$ is not $ \k$-rigid  if and only if  there is  $\psi \in \E( \llbracket m \rrbracket^d) \cap L^{2}(\s_{}^{-1})$ such that  $ \partial ^{\k}\psi (0)\neq 0$, where $ \partial ^{\k} = \partial _1^{\k_{1}}\dots \partial _{d}^{\k_{d}}.$

\item  If $ 0$ is a pole of order $ \k$ for $ \s_{}^{-1}$, then $ \X$ is $ \k$-rigid.

\end{itemize}

 \end{theorem}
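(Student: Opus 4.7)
The plan is to follow the Fourier/duality strategy described in the introduction, adapted to the torus $\mathbb{T}^d$. First, I will translate linear $\gamma$-rigidity on $A = \llbracket m \rrbracket^d$ into a Hilbert-space condition: using the isometry $\X(h) \leftrightarrow \hat h$ between the $L^2(\mathbf{P})$-closed span of the $\X(h)$ and its image in $L^2(\S)$, $\gamma$-rigidity becomes $\hat\gamma \in H_A$, where $H_A$ is the $L^2(\S)$-closure of $\{\hat h : h \textrm{ finitely supported in } A^c\}$. Equivalently, $\hat\gamma \perp H_A^\perp$ in $L^2(\S)$, so the whole theorem reduces to a careful description of $H_A^\perp$.

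The main technical step, and the only place where real care will be required, is identifying $H_A^\perp$ with trigonometric polynomials modulated by $\s^{-1}$. For $\varphi \in L^2(\S)$, the orthogonality relations $\int e^{-i\m\cdot u}\overline\varphi(u)\,\S(du) = 0$ for every $\m \notin A$ are precisely the vanishing, outside the finite set $A$, of the Fourier coefficients of the finite measure $\overline\varphi\,\S$ on $\mathbb{T}^d$ (the measure is finite because $\S(\mathbb{T}^d) = \textrm{Var}(\X_0) = 1$). By Fourier inversion on $\mathbb{T}^d$, $\overline\varphi\,\S$ is then absolutely continuous with density $\psi \in \E(A)$. Decomposing $\S = \s\,du + \S_s$ and invoking mutual singularity, $\varphi = 0$ $\S_s$-a.e.\ and $\overline\varphi\,\s = \psi$ Lebesgue-a.e., with $\|\varphi\|_{L^2(\S)}^2 = \int |\psi|^2/\s\,du$. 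Conversely, any $\psi \in \E(A)$ with $\int |\psi|^2/\s < \infty$ produces such a $\varphi$, giving a bijection between $H_A^\perp$ and $\E(A)\cap L^2(\s^{-1})$.

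With this dictionary in hand, the first assertion is immediate: LMR is the vanishing of $H_A^\perp$, which is the absence of nonzero $\psi \in \E(A)$ satisfying \eqref{eq:lmr-poly-discrete}. For the second, $\k$-rigidity is $\gamma_\k$-rigidity with $\gamma_\k(\m) = \m^\k \mathbf{1}_A(\m)$; I will compute the pairing $\langle \hat\gamma_\k,\varphi\rangle_{L^2(\S)} = \int \hat\gamma_\k\,\psi\,du$ directly, and up to a nonzero constant and the automorphism $\psi(u) \mapsto \psi(-u)$ of $\E(A)\cap L^2(\s^{-1})$ (which preserves this space because $A$ is symmetric and $\s$ is even), it extracts $\partial^\k\psi(0)$. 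Hence $\k$-rigidity is equivalent to $\partial^\k\psi(0) = 0$ for every $\psi \in \E(A)\cap L^2(\s^{-1})$, i.e.\ the contrapositive of the claim.

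For the third assertion, take any $\psi \in \E(A)\cap L^2(\s^{-1})$. If $|u|^{2N+2}/\s(u)$ fails to be locally integrable at $0$ for every $N \geq |\k|$, then the $L^2(\s^{-1})$ hypothesis forces $\psi$ to vanish to infinite order at $0$; trigonometric polynomials being entire, $\psi \equiv 0$ and $\partial^\k\psi(0) = 0$ trivially. Otherwise I will fix such an $N$ and split $\psi = Q_N + R_N$ into its Taylor polynomial of degree $N$ at $0$ and a remainder $R_N(u) = O(|u|^{N+1})$. Both $\int_{B(0,\varepsilon)}|\psi|^2/\s$ and $\int_{B(0,\varepsilon)}|R_N|^2/\s$ are finite by construction, so $\int_{B(0,\varepsilon)}|Q_N|^2/\s < \infty$. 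The $\k$-pole hypothesis then forces the coefficient of $u^\k$ in $Q_N$ — which equals $\partial^\k\psi(0)/\k!$ — to vanish, and the third assertion follows from the second.
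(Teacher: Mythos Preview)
Your treatment of the first two assertions follows the same route as the paper: identify $H_A^\perp$ with $\E(A)\cap L^2(\s^{-1})$ via $\varphi \mapsto \overline\varphi\,\S = \psi\,du$, and read off LMR and $\k$-rigidity from this. The $u\mapsto -u$ automorphism you invoke is a harmless artefact of Fourier convention.

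For the third assertion, your second case is correct, but your first case contains a genuine gap. The claim that if $\int_{B(0,\varepsilon)}|u|^{2N}/\s = \infty$ for every $N$ then any $\psi\in L^2(\s^{-1})$ must vanish to infinite order at $0$ is false for $d\geq 2$. Take $\s(u)=(u_1-u_2)^2$ near $0$ (this is Example~\ref{ex:order-is-complex}): the ratio $|u|^{2N}/\s$ diverges along the diagonal $u_1=u_2$ for every $N$, yet $\psi(u)=e^{iu_1}-e^{iu_2}\in\E(\llbracket 1\rrbracket^2)$ has $|\psi|^2/\s\to 1$ as $u\to 0$, so $\psi\in L^2(\s^{-1};B(0,\varepsilon))$ with $\partial_1\psi(0)=i\neq 0$. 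Isotropic bounds are simply too coarse to detect which trigonometric polynomials lie in $L^2(\s^{-1})$. The correct bridge, which the paper carries out for the continuous analogue in the lemma of Section~\ref{sec:prf-main-krigid} and invokes without comment in the discrete proof, is to pass from the Taylor series $\psi(u)=\sum_\m a_\m u^\m$ to a \emph{finite} polynomial $Q=\sum_{\m\in I_0\cup\{\k\}}a_\m u^\m$ with $\psi\sim Q$ near $0$, where $I_0$ is the set of $\preceq$-minimal indices with $a_\m\neq 0$ (finite by Dickson's lemma). The equivalence gives $\int_{B(0,\varepsilon)}|Q|^2/\s<\infty$, and the $\k$-pole definition applied to the polynomial $Q$ forces its $u^\k$-coefficient $a_\k=\partial^\k\psi(0)/\k!$ to vanish. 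This argument replaces your case split entirely.
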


    \begin{proposition}    [Specialization to time series in dimension $ d = 1$]
    \label{prop:spec-1D}  $ \X$ is not LMR on $ \llbracket m \rrbracket$  if and only if  the number of poles of $ \s_{}^{-1}$ on $ [-\pi ,\pi )$ counted with multiplicity is $\leqslant 2m$.
Also, $ \X$  is  not $ 0$-rigid   if and only if   there are $ 2m$ poles or less and $ 0$ is not a pole.
    \end{proposition}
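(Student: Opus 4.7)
The plan is to combine Theorem \ref{thm:discrete} with the very concrete structure of trigonometric polynomials of bounded degree in dimension one. First I would record that any $\psi\in\E(\llbracket m\rrbracket)$ has the form $\psi(u)=\sum_{n=-m}^{m}a_{n}e^{inu}=e^{-imu}P(e^{iu})$ where $P(z)=\sum_{n}a_{n}z^{n+m}$ is an ordinary polynomial of degree $\leqslant 2m$ in $z$. Since $P$ has at most $2m$ complex roots counted with multiplicity, $\psi$ has at most $2m$ zeros on $[-\pi,\pi)$ counted with multiplicity. Conversely, given any finite list of prescribed zeros $u_{i}$ with multiplicities $q_{i}$ summing to $N\leqslant 2m$, the trigonometric polynomial $\prod_{i}(e^{iu}-e^{iu_{i}})^{q_{i}}$ has degree $N$ in $e^{iu}$ and, after multiplication by an appropriate $e^{-ilu}$ with $l\in[N-m,m]$, lies in $\E(\llbracket m\rrbracket)$.

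The technical heart of the argument is the local lemma: for a trigonometric polynomial $\psi$, the integral $\int_{B(u_{i},\varepsilon)}|\psi(u)|^{2}\s(u)^{-1}du$ is finite if and only if $\psi$ vanishes at $u_{i}$ to order at least $q_{i}$. I would prove this by writing the Taylor expansion $\psi(u)=c(u-u_{i})^{k_{i}}+O((u-u_{i})^{k_{i}+1})$ near $u_{i}$, so that $|\psi|^{2}$ is sandwiched between constant multiples of $(u-u_{i})^{2k_{i}}$ in a small neighbourhood, and then invoking the very definition of the order of the pole at $u_{i}$: $\int_{B(u_{i},\varepsilon)}(u-u_{i})^{2k}\s^{-1}du<\infty$ iff $k\geqslant q_{i}$. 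Away from the (finitely many) poles, $\s$ is bounded below, so the contribution outside small neighbourhoods is always finite. This step, elementary but essential, is where the assumption implicit in the proposition that $\s$ has the structure required to speak of poles with multiplicities is used crucially.

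Combining these two observations, a nonzero $\psi\in\E(\llbracket m\rrbracket)\cap L^{2}(\s^{-1})$ exists if and only if one can construct a trigonometric polynomial of degree $\leqslant m$ whose total zero count on $[-\pi,\pi)$ absorbs $\sum_{i}q_{i}$, which holds iff $\sum_{i}q_{i}\leqslant 2m$. By the first bullet of Theorem \ref{thm:discrete} this is the claimed LMR criterion. For the $0$-rigidity part, the second bullet of Theorem \ref{thm:discrete} demands additionally $\psi(0)\neq 0$. If $0$ is a pole, the local lemma forces $\psi(0)=0$, so no admissible $\psi$ exists. If $0$ is not a pole and $\sum_{i}q_{i}\leqslant 2m$, the explicit $\psi(u)=e^{-ilu}\prod_{i}(e^{iu}-e^{iu_{i}})^{q_{i}}$ gives $\psi(0)=\prod_{i}(1-e^{iu_{i}})^{q_{i}}\neq 0$. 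The main obstacle is the clean statement and proof of the local equivalence between vanishing order of $\psi$ at $u_{i}$ and local integrability against $\s^{-1}$; once this is in hand everything else reduces to counting zeros of a polynomial of degree $2m$.
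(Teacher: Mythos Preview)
Your proposal is correct and follows the same overall strategy as the paper: reduce both claims to Theorem \ref{thm:discrete} and then count zeros of a degree-$2m$ algebraic polynomial $P$ with $\psi(u)=e^{-imu}P(e^{iu})$. The paper additionally exploits the evenness of $\s$ to pair poles as $\pm u_{i}$ and to symmetrise $\psi$ before counting, whereas your local lemma (vanishing order at $u_{i}$ $\Leftrightarrow$ local integrability against $\s^{-1}$) handles this directly without invoking symmetry; your route is slightly more streamlined.

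One small correction: your claim that ``away from the (finitely many) poles, $\s$ is bounded below'' is stronger than necessary and not always true (e.g.\ $\s(u)=|u-u_{0}|^{1/2}$ near a non-pole $u_{0}$). What you actually need, and what holds by the very definition of a pole, is that $\s^{-1}$ is locally integrable away from the poles; since $\psi$ is bounded on $\mathbb{T}$, this is enough to make the contribution outside the small neighbourhoods finite. You should also note explicitly that if some pole has infinite order, or if there are infinitely many poles, your local lemma forces any $\psi\in\E(\llbracket m\rrbracket)\cap L^{2}(\s^{-1})$ to have infinitely many zeros (counted with multiplicity), hence $\psi\equiv 0$, so $\X$ is LMR and the pole count is indeed $>2m$.
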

 
The proof is at Section \ref{sec:discrete-prf}. We discuss the LMR result below,  the other statements  seem to be the first  dealing with partial rigidity, i.e. when one is concerned not in retrieving all of $ \X$ on $ \llbracket m \rrbracket^d $, but only its restricted moments, by analogy with the continuous case.

 The line of research of linear interpolation in the 20th century was mainly concerned with time series, and authors noted that for a polynomial satisfying \eqref{eq:lmr-poly-discrete}, perfect linear interpolation is impossible,  see  (10.28) and Theorem 10.3 in  \cite{Rozanov}, with also an extension to processes taking values in $ \mathbb{R}^{q}$.
   Much later,  \cite{LyonsSteif} study the related problem of the strong full K property for $ \{0,1\}$-valued $ \X$, i.e. what does it imply on the $ \X(\m), | \m | \leqslant m$, if we impose $ \X(\m) = 1$ for $ m\leqslant  | \m | < m + k$ for $ m,k$ arbitrarily large? It is clear indeed that if no polynomial $ \psi $ is in $ L^{2}(\s^{-1})$, then $ \X$ is LMR for all $ m$, and as such for $ k$ large enough the external conditioning completely determines $ \X(\m), | \m | \leqslant m$. This strong resemblance explains why condition (iii) in their theorem 7.7 is exactly the same condition. None of these works seems to acknowledge that  $ m$ must be  the degree of $ \psi .$ This result also completes the answer of  \cite{GhoshComplete} to the Question 9.9 of  \cite{LyonsSteif} regarding rigidity on $ A = \{0\}$.
    
     \cite{BufLinear} studies the rigidity of continuous models through a discretisation procedure.  Given a stationary random measure $ \M$ on $ \mathbb{R}^{d}$, one can consider the associated stationary discrete process 
\begin{align*}
\X_{\m} = \M(\m + [-1/2,1/2]^{d}),\m\in \mathbb{Z} ^{d}.
\end{align*} They show number rigidity of $ \M$ on $ [-1/2,1/2]^{d}$ by showing the rigidity of $ \X$ on $ \{0\}$, and derive a necessary condition in terms of the covariance decay for $ \X.$

%

 \begin{remark}
 A reformulation of $ 0$-rigidity in dimension $ 1$ is the following: $ \X$ is $ 0$-rigid if  $ \X$ is $ LMR$ or $ 0$ is a pole, which means that either $ \int_{ B(0,\varepsilon )}\s_{}^{-1} = \infty $ or that the poles are away from zero and strictly more than $ m$ (counted with multiplicity), i.e. if one considers only $ m$ points, necessarily at least  one pole is not covered, i.e. for $ u_{1},\dots ,u_{m}\in   \mathbb C ^{d}  \setminus \{1\},$
\begin{align*}
\int_{ }\frac{ \prod_{i} | e ^{iu}-e^{iu_{i}} | ^{2}}{\s_{}(u)}du = \infty .
\end{align*}
This seems to correspond to the content of Remark 2.1 of  \cite{BufLinear} for continuous measures $ \S = \s_{}\Leb$.x
 \end{remark}

At the contrary of the continuous case, $ k$-rigidity is not monotonous in $ k$, in the sense that there are examples of $ 1$-rigid processes that are not $ 0$-rigid in dimension $ 1$, due to the constraint on the degree:
\begin{example}
\label{ex:discrete}
Consider $ m = 1$ in dimension $ d = 1,$ and $ \s(u) = (u-1)^{2}(u + 1)^{2},u\in \mathbb{T}$. 
Let $ \psi $ a degree $ 1$ trigonometric polynomial of $ L^{2}(\s^{-1})$. In particular, there is a polynomial $ P$ of degree $ 2$ such that 
\begin{align*}e^{iu}\psi (u) = P(e^{iu}) = a(e^{iu}-z_{1})(e^{iu}-z_{2}),u\in  \mathbb  T  ,
\end{align*}for some $ a\in  \mathbb C  \setminus \{0\},z_{1},z_{2}\in  \mathbb C .$ Examining the neighbourhoods of $ 1$ and $ -1$ of $ \s^{-1}$, we necessarily have $ z_{1} = e^{i},z_{2} = e^{-i}$ (or the other way around), this is also sufficient for $ \psi \in L^{2}(\s^{-1})$, showing that $ L^{2}(\s^{-1})\cap \E(\llbracket 1 \rrbracket)\neq \{0\}$, hence $ \s$ is not LMR. It also implies that every such $ \psi $ is symmetric, hence $ \psi '(0) = 0$, meaning that $ \s$ is $ 1$-rigid. On the other hand, $ P(0)\neq 0$, hence $ \s$ is not $ 0$-rigid. 
\end{example}

 \section{Applications}   
 \label{sec:appli}

 \subsection{Determinantal processes and completeness
} 
\label{sec:dpp}

Determinantal points processes (DPPs), introduced in the context of quantum mechanics, have gained popularity as many classes of essential models in Random Matrices, Statistical Physics, Combinatorics and other fields have proven to be determinantal, see \cite{BKPV}. In the Euclidean context, we follow  \cite{soshnikov}: a simple   point process $ \P $ on $ \mathbb{R}^{d}$ is determinantal with  kernel 
 $ K:\mathbb{R}^{d}\times \mathbb{R}^{d} \to \mathbb{C}$ if for every $ k\in \mathbb{N}^{*}$, 
\begin{align*}\rho _{k}(x_{ 1},\dots ,x_{k}): = \det((K(x_{i},x_{j}))_{ 1\leqslant i,j\leqslant k})\geqslant 0
\end{align*}
is the $ k$ point correlation function of $ \P $, i.e. 
for any non-negative $ \varphi :(\mathbb{R}^{d})^{k}\to \mathbb{R}$, we have 
\begin{align*}\mathbf{E}\left(
\sum_{x_{ 1},\dots ,x_{k}\in \P }^{\neq }\varphi (x_{ 1},\dots ,x_{k})
\right) = \int_{}\rho _{k}\varphi ,
\end{align*}
where the sum runs over $ k$-tuples of pairwise distinct points. We assume up to a scalar rescaling that $ K(0,0) = 1$ to deal with unit intensity processes. Stationarity yields that  $ \rho _{2}(x,y) = \rho _{2}(x-y)$ with an abuse of notation. Not all functions $ K$ give rise to a DPP and in particular we will require that $K$ is Hermitian and positive definite, so that 
\begin{align*}\rho _{2}(x-y) = 1- \kappa(x-y)^{2}\in \mathbb{R}_{ + }
\end{align*}  where $ \kappa(x) =    | K(0,x)  | , 0\leqslant  \kappa(x)  \leqslant 1$, we also have to assume that $ \kappa $ is locally square integrable for definiteness issues. See  \cite[Section 4.5]{BKPV} for unicity and existence questions, and  \cite{soshnikov} for a treatment of such stationary DPPs, it is shown in particular that they are mixing and ergodic, and the trace-class property yields that $ \hat \kappa $ is integrable and $ 0\leqslant \hat \kappa \leqslant 1$.

There have been a lot of works on the rigidity (mainly of order $ 0$) of continuous DPPs  \cite{GP17,GhoshComplete,BufAiry,BufBalanced, BufLinear,Buf-conditional}.
The main result of this section is a characterisation of $ \k$-rigidity in the continuous setting:
\begin{corollary}
\label{thm:k-rigid-dpp}Let $ \P$ be a stationary DPP on $ \mathbb{R}^{d}$ with Hermitian positive kernel $ K$ and $ \kappa(x) =  | K(0,x) | $  square integrable.
 Then for $ \k\in \mathbb{N}^{d},$ $ \P$ is $ \k$-rigid on a convex body $ A$   if and only if  $ (1- \widehat {\kappa ^{2}})^{-1}$ has a pole of order $ \k$ in $ 0$. When it is not the case, $ \P$ is $ \k$-tolerant in the sense that it is not $ Q$-rigid on $ A$ for any $ \k$-polynomial $ Q$.
 \end{corollary}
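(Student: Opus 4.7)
The plan is to identify the spectral density of the DPP explicitly and then reduce to the general machinery of Theorem \ref{thm:main-krigid-iso} and Proposition \ref{prop:not-lmr-finite-z}.

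First I would compute the correlation measure. Because $\P$ has unit intensity and two-point correlation $\rho_{2}(t)=1-\kappa(t)^{2}$, the truncated correlation is $\rho^{(2)}_{tr}(t)=-\kappa(t)^{2}$, so $\C=\delta_{0}-\kappa^{2}\Leb$. Taking its Fourier transform in the sense of tempered distributions yields $\S=(1-\widehat{\kappa^{2}})\Leb$, whence $\s(u)=1-\widehat{\kappa^{2}}(u)$. The hypothesis $\kappa\in L^{2}$ gives $\kappa^{2}\in L^{1}$, so $\widehat{\kappa^{2}}$ is continuous, vanishes at infinity by Riemann--Lebesgue, and is bounded by $\widehat{\kappa^{2}}(0)=\|\kappa\|_{2}^{2}\leqslant 1$ (the last inequality being forced by $\s\geqslant 0$).

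The sufficiency half is immediate: if $\s^{-1}=(1-\widehat{\kappa^{2}})^{-1}$ has a pole of order $\k$ at $0$, Theorem \ref{thm:main-krigid-iso} gives $\k$-rigidity on any bounded measurable set, in particular on $A$. For the converse, I would verify that $\s$ is \emph{simple} in order to apply Proposition \ref{prop:not-lmr-finite-z}. The lower bound $\s(u)\geqslant 1/2$ outside a large compact set follows from $\widehat{\kappa^{2}}(u)\to 0$. For the zero set: the elementary bound $|\widehat{\kappa^{2}}(u)|\leqslant \widehat{\kappa^{2}}(0)$ shows that $\s(u)=0$ forces $\widehat{\kappa^{2}}(u)=\widehat{\kappa^{2}}(0)=1$; equality at some $u\neq 0$ would, by the triangle-inequality case of Bochner, require $\kappa^{2}$ to be supported on the affine hyperplanes $\{x:u\cdot x\in\theta+2\pi\mathbb{Z}\}$, which is impossible for a nonzero element of $L^{1}(\mathbb{R}^{d})$. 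Hence $0$ is the only candidate zero of $\s$, arising precisely in the ``projector-like'' case $\|\kappa\|_{2}^{2}=1$. Finite order at $0$ is extracted from a polynomial estimate $\widehat{\kappa^{2}}(0)-\widehat{\kappa^{2}}(u)\leqslant C\|u\|^{2q}$, which in turn follows from integrability of $\|x\|^{2q}\kappa^{2}$ for some $q$ large enough.

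With simplicity in hand, Proposition \ref{prop:not-lmr-finite-z} gives that $\P$ is $\k$-rigid on $B(0,\eta)$ iff $0$ is a $\k$-pole. The extension from a ball to a general convex body $A$ combines the sufficiency (which already holds on arbitrary bounded $A$) with the observation that the $\k$-pole condition is intrinsic to $\s$, independent of $A$, together with stationarity to move small balls inside $A$. The $\k$-tolerance statement when $0$ fails to be a $\k$-pole then follows directly from Proposition \ref{thm:tolerant-simple}. The main obstacle I foresee is rigorously establishing finite order of the zero at $0$ under only $\kappa\in L^{2}$: one must preclude pathological spectral densities vanishing to infinite order. This should be handled by exploiting $\kappa^{2}\leqslant\kappa\leqslant 1$ (via Cauchy--Schwarz on the Hermitian kernel) to bootstrap enough integrability, or by noting that the proofs of Theorem \ref{thm:main-krigid-iso} and Proposition \ref{prop:not-lmr-finite-z} only require finite-order control up to the fixed $|\k|$ under consideration.
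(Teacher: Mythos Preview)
Your approach mirrors the paper's: identify $\s=1-\widehat{\kappa^{2}}$, apply Theorem~\ref{thm:main-krigid-iso} for sufficiency, and for necessity check that $\s$ is ``simple'' so that Proposition~\ref{prop:not-lmr-finite-z} and Proposition~\ref{thm:tolerant-simple} apply. The paper is terser but follows the same route, with the same two structural facts: $\s$ vanishes only at $0$, and $\s\geqslant 1/2$ outside a compact set by Riemann--Lebesgue.

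There is one genuine error in your sketch. Your claimed estimate $\widehat{\kappa^{2}}(0)-\widehat{\kappa^{2}}(u)\leqslant C\|u\|^{2q}$ is an \emph{upper} bound on $\s$ near $0$; finite order of the pole of $\s^{-1}$ demands a \emph{lower} bound, namely $\int_{B(0,\varepsilon)}\|u\|^{2q}\s^{-1}<\infty$. Moment integrability of $\kappa^{2}$ gives precisely the wrong direction and cannot establish finite order.

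Fortunately the obstacle is illusory. Inspect the necessity half of the proof of Proposition~\ref{prop:not-lmr-finite-z} (and Lemma~\ref{lm:no-Q-rigid}): the finite-order assumption is used only at poles $u_{i}\neq 0$, to build the polynomial factor $\prod_{i:u_{i}\neq 0}\|u-u_{i}\|^{2q}$ guaranteeing membership in $L^{2}(\s^{-1})$ away from $0$. Here the only zero is $0$ itself, so that product is empty; one takes $\tilde\psi=J_{d}(\cdot\,\eta/M)^{M}$ directly, and the uniform lower bound $\s\geqslant c>0$ on $B(0,\varepsilon)^{c}$ (which you correctly obtain) suffices for $\tilde\psi\in L^{2}(\s^{-1};B(0,\varepsilon)^{c})$. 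The hypothesis ``$0$ is not a $\k$-pole'' itself furnishes the polynomial $P\in L^{2}(\s^{-1};B(0,\varepsilon))$ with nonzero $\k$-coefficient needed in Lemma~\ref{lm:no-Q-rigid}. No separate finite-order control at $0$ is required, and under only $\kappa\in L^{2}$ it need not hold in general. The paper's proof tacitly relies on this, citing Proposition~\ref{thm:tolerant-simple} without checking finite order; your instinct that something is being glossed over is correct, but the resolution is that the relevant part of the argument never uses it.
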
  
 
We see that $ \s: = (1- \widehat {\kappa ^{2}}) $ must vanish in $ 0$ to have some rigiditiy. That is because, by  \cite{soshnikov}, $ \s(u)du$ is indeed the structur factor of $ \P$, and $ \s(0) = 0$ exactly means that $ \P$ is hyperuniform (see \eqref{eq:hu}); hyperuniform DPPs are a very import class in Statistical Physics and Random Matrices, and are also used for tasks of numerical integration  \cite{BH16} or machine learning   \cite{KT12}.
 
 \begin{proof} 
Since $\s(u)\geqslant 0, \int_{ }\kappa ^{2}\leqslant 1$, and $ \s(u) = 0$ is only possible if $ u = 0$. 
 Since $ \kappa ^{2}$ is integrable, 
 the Riemann-Lebesgue lemma ( \cite[Th. 14.2]{DuiKolk}) yields that $  | \widehat {\kappa ^{2}}(u)| <1/2$ for $  | u | \geqslant T$ for some $ T. $ 
 By Theorem \ref{thm:main-krigid-iso}, $ \P$ is $ \k$-rigid on $ A$ if  $ \s_{}^{-1}$ has a pole of order $ \k$ in $ 0$.
If it is not satisfied,   it follows by Proposition  \ref{thm:tolerant-simple} that $ \M$ is not $ Q$-rigid for a $ \k$-polynomial $ Q$.
  
 \end{proof}
Linear number rigidity is therefore equivalent to the non-integrability of $ (1- \widehat {\kappa ^{2}})^{-1}$.
For many examples, $ \widehat {\kappa ^{2}} (u)= 1-\sigma ^{2}u^{2} + o(u^{2})$ for some $ \sigma >0$, hence there is $ \k$-rigidity  if and only if  $ \k = 0$ and $ d\in \{1,2\}$, and no rigidity in higher dimensions. This applies for instance to the infinite Ginibre ensemble on $\mathbb{R}^{2}\approx  \mathbb C $, defined by 
\begin{align*}K(x,y) = e^{x\bar y-\frac{1}{2}|x|^2-\frac{1}{2}|y|^2};x,y\in  \mathbb C ,
\end{align*}i.e. $ \kappa(x) = e^{-\frac{  | x | ^{2}}{2}}$, retrieving the result of  \cite{GP17}, or the result of   \cite{GhoshComplete}, stating that  the sine process in dimension $ 1$ and Ginibre ensemble in dimension $ 2$ are number rigid. Tensor kernels give non rigid examples: \cite{BufLinear}  proves non-rigidity for the  DPP  which kernel is  $ \kappa(x) = \sinc(x_{1})\sinc(x_{2})$ on $ A = [-1/2,1/2]^{2}$ with a discretisation (see Section  \ref{sec:discrete}). Theorem  \ref{thm:k-rigid-dpp} yields that this $ \P$ is not linear number rigid in the continuous sense on any convex body  $ A$, 
 any discrete average   will not be linearly rigid either.\\

 This result also bears a  connection to the question of finding random countable sets of exponential functions spanning $ L^{2}(A)$ for a compact $ A\subset \mathbb{R}^{d}$. For $ \chi \subset \mathbb{R}^{d} $, call 
\begin{align*}E_{\chi } = \{t\mapsto e^{\imath tx};x\in \chi \}.
\end{align*}
Say that $ E_{\chi }$  is  {\it complete}  if  functions of $ L^{2}([-\pi ,\pi ])$ are $ L^{2}$ approximable by functions of $ E_{\chi }$.  \cite{LyonsSteif} investigated the question of completeness   for a random set $ \chi\subset \mathbb{R}^{d} $, and more particularly if $ \chi $'s law is a DPP, leaving open several questions.  \cite{GhoshComplete} established a connection between number rigidity and completeness, and a corollary of  \cite[Th. 1.3]{GhoshComplete} and Corollary  \ref{thm:k-rigid-dpp} is the following result:
\begin{corollary}
If $ \P  $ is a  DPP, then $ E_{\P  }$ spans $ L^{2}([-\pi ,\pi ])$ if $ (1- \widehat {\kappa ^{2}})^{-1}$ is not integrable in $ 0.$
\end{corollary}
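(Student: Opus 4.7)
The statement is a composition of two results already available, so the plan is structured as a short bridging argument rather than a standalone proof.

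First, I would unpack the hypothesis using Corollary \ref{thm:k-rigid-dpp} specialized to $\k = 0$. By that corollary, $\P$ is linearly $0$-rigid (i.e.\ number rigid) on any convex body $A$ iff $(1-\widehat{\kappa^2})^{-1}$ has a pole of order $0$ at the origin. As noted just after Theorem \ref{thm:main-krigid-iso}, for $\k = 0$ a $\k$-pole is exactly the condition $\int_{B(0,\varepsilon)}\s^{-1}(u)\,du = \infty$, i.e.\ $\s^{-1}$ is not locally integrable at $0$. Since $\s = 1 - \widehat{\kappa^2}$ is the spectral density of $\P$ (by the result of \cite{soshnikov} recalled in the proof of Corollary \ref{thm:k-rigid-dpp}), the hypothesis of the present corollary is precisely the condition for number rigidity of $\P$.

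Second, I would invoke Theorem 1.3 of \cite{GhoshComplete}, which states that if a stationary point process $\P$ on $\mathbb{R}^d$ is number rigid, then the random collection of complex exponentials $E_\P = \{t \mapsto e^{\imath t x} : x \in \P\}$ is complete in $L^2([-\pi,\pi])$. Applying this to the number rigidity established in the first step yields the completeness claim.

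So the proof plan reduces essentially to a one-line chain: non-integrability of $(1-\widehat{\kappa^2})^{-1}$ at $0$ $\Rightarrow$ number rigidity of $\P$ (by Corollary \ref{thm:k-rigid-dpp}) $\Rightarrow$ completeness of $E_\P$ in $L^2([-\pi,\pi])$ (by \cite[Th.\ 1.3]{GhoshComplete}). There is no real obstacle; the only thing to verify carefully is that the form of number rigidity produced by Corollary \ref{thm:k-rigid-dpp}, which is \emph{linear} rigidity of $\M(A) = \#(\P\cap A)$, is strong enough to trigger the hypothesis of \cite[Th.\ 1.3]{GhoshComplete}, which typically only requires (possibly non-linear) measurability of $\#(\P\cap A)$ with respect to $\sigma(\P\cap A^c)$. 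Since linear rigidity is a stronger property than plain measurable rigidity, this implication is immediate and the composition goes through.
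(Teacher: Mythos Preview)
Your proposal is correct and matches the paper's approach exactly: the paper states this corollary as an immediate consequence of \cite[Th.~1.3]{GhoshComplete} combined with Corollary~\ref{thm:k-rigid-dpp}, which is precisely the chain you spell out. Your observation that linear rigidity implies the measurable rigidity needed by \cite{GhoshComplete} is the only detail worth checking, and you handle it correctly.
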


  In particular it retrieves Th. 1.5 of  \cite{GhoshComplete}, which shows number rigidity in dimension $ 1$ if $ \hat  \kappa $ is the indicator function of an interval (i.e. $ \hat \kappa  = \hat \kappa ^{2}$): Parseval equality yields
\begin{align*}
\widehat{ \kappa ^{2}}(0) = \int_{ }\kappa ^{2} = \int_{ } \hat \kappa ^{2} = \int_{ } \hat \kappa  = \kappa (0) = 1,
\end{align*} 
hence $ \s(0) = 0$ (meaning $ \P$ is hyperuniform), and $ \widehat{ \kappa ^{2}}$ is clearly Lipschitz, hence $ \s(u) = O( | u | )$ and we have number rigidity.

 \subsection{Covariance of Gibbs measures}
 \label{sec:gibbs}
 
 Gibbs measures are also a prominent class of point processes, especially in statistical physics  (\cite{Lewin}).
 It is known that in the case of short range interactions, such processes are not hyperuniform, see \cite{DereudreFlimmel}, hence the structure factor $ \S$ will not vanish around $ 0$ and the process is most likely not rigid.
On the other hand, those with a very strong dependency at long range, such as Coulomb gases, or more generally Riesz gases, form a very important class of models in statistical physics, and are expected to be hyperuniform,  see \cite[VI.C]{Lewin} and references therein.  

   Such models are usually defined for a finite but large number of particles interacting through some physical equations, and defining an infinite stationary model compatible with these equations is already a challenge, there can in many instances be distinct infinite models compatible with local conditions. In dimension $ 1$,  \cite{ValkoVirag} have reached a universal explicit limit of 1D sine$ _{\beta }$ log gases, further studied in  \cite{DHLM,ChhaibiNaj}, where it is proved that it is number rigid. Not much seems to be known in higher dimensions on the rigidity (or existence) of stationary models. Let us emphasize that hyperuniformity \eqref{eq:hu} can be equivalently defined in a simpler way, namely we only require that 
\begin{align}
 \label{eq:weak-hu}
\frac{ \textrm{Var}\left(\sum_{x\in \P}f(R^{-1}x)\right)}{R^{d}}\to 0
\end{align}
for some Schwartz function $ f$ with $ \int_{\mathbb{R}^{d}}f\neq 0$ (this is usually simpler to show than for $ f$ the unit ball indicator as in \eqref{eq:hu}).
 Summing up the current main results  in this context, $ k$-rigidity of a hyperuniform isotropic (or 1D) model is equivalent to 
\eqref{eq:ass-general-k-rigid},
which is already a rigorous statement on the second-order properties of the process through $ \s$, though sometimes difficult to connect with the properties of the covariance $ \C$ if the correlations are not known to decay fast. In particular, proving hyperuniformity is already a challenge, the only rigorous result in dimension $ d\geqslant 2$ is, to my knowledge, the work of \cite{Leble}, showing that 2D Coulomb gases are hyperuniform.

 In the other direction,  \cite{DHLM} and  \cite{DereudreVasseur} have established non-rigidity results, the latter managed to prove the existence of an infinite stationary and isotropic model in all dimension $ d\geqslant 1$ compatible with the $ s$-Riesz DLR equations for $ d-1<s<d$, the  {\it circular Riesz gas,} which is not number rigid. Hence we can use the necessary condition of Theorem \ref{thm:converse-rigidity} to deduce statements about $ \S$, and therefore $ \C =(2\pi )^{d}  \dot  \F\S.$ The fact that Riesz gases are hyperuniform is expected, see an argument in \cite[VI.C]{Lewin}, but there does not seem to be a rigourous proof.

 \begin{corollary}Let $ \P$ a hyperuniform    stationary isotropic point process that is not number rigid in dimension $ d\in \{1,2\}$. Then
 \begin{align*}
\int_{ } | t |^{d}  | \C | (dt) = \infty .
\end{align*}
 \end{corollary}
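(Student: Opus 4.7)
The strategy is to argue by contrapositive: assuming $\int |t|^d |\C|(dt) < \infty$, I will show that $\M$ must be linearly number rigid, contradicting the hypothesis. The argument is essentially the $d=1,2$ generalization of the computation carried out just after \eqref{eq:buf} to recover \cite{GL17}'s sufficient condition.

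First, I would note that under the assumption $\int |t|^d |\C|(dt) < \infty$, together with local finiteness of the tempered measure $\C$, one obtains the global bound $\int |\C|(dt) < \infty$. In particular, $\C$ is a finite signed Borel measure, so its Fourier transform $\s(u) = \int e^{-\imath t\cdot u}\C(dt)$ is a bounded continuous function (so the spectral measure $\S$ has no singular part near $0$ and coincides with its density $\s$ there), and hyperuniformity translates into the identity $\s(0) = \int \C(dt) = 0$. Using isotropy (hence parity of $\s$), one writes
\begin{equation*}
\s(u) \;=\; \tfrac{1}{2}\bigl(\s(u)+\s(-u)\bigr)-\s(0) \;=\; \int \bigl(\cos(t\cdot u)-1\bigr)\,\C(dt).
\end{equation*}

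The key pointwise estimate is $|\cos(t\cdot u)-1|\leqslant \min\bigl(\tfrac{1}{2}(t\cdot u)^{2},\,2\bigr)$. In dimension $d=2$, this gives directly
\begin{equation*}
|\s(u)|\;\leqslant\;\tfrac{1}{2}\|u\|^{2}\int \|t\|^{2}|\C|(dt)\;=\;C\|u\|^{2},
\end{equation*}
with $C<\infty$ by assumption. In dimension $d=1$, I interpolate: $\min(x^{2},1)\leqslant |x|$ for all real $x$, hence $|\cos(tu)-1|\leqslant |tu|$, and I conclude $|\s(u)|\leqslant |u|\int |t|\,|\C|(dt) = C|u|$. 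In both cases one obtains $\s(u) = O(|u|^{d})$ in a neighbourhood of $0$.

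The bound $\s(u) \leqslant C\|u\|^{d}$ forces
\begin{equation*}
\int_{B(0,\varepsilon)}\frac{du}{\s(u)} \;\geqslant\; \frac{1}{C}\int_{B(0,\varepsilon)}\frac{du}{\|u\|^{d}} \;=\;\infty,
\end{equation*}
the divergence being exactly the borderline behaviour of $\|u\|^{-d}$ in dimension $d$. Since $\s$ is isotropic, Proposition \ref{prop:k-incomp-iso} applied with $\k=0$ (or equivalently the discussion following Theorem \ref{thm:main-krigid-iso}) turns this divergence into a $0$-pole of $\s^{-1}$ at $0$, and Theorem \ref{thm:main-krigid-iso} then yields linear number rigidity of $\M$ on any bounded measurable set, contradicting the hypothesis. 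The only delicate step is the passage between the distributional definition of $\s$ and the pointwise integral representation used above, but it is legitimate precisely because the moment assumption upgrades $\C$ to a finite signed measure.
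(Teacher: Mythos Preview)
Your argument is correct and follows essentially the same contrapositive route as the paper: the moment hypothesis upgrades $\C$ to a finite signed measure, hyperuniformity gives $\s(0)=0$, and the bound $|\s(u)|=O(\|u\|^{d})$ (via the cosine estimate in your version, via differentiability plus parity in the paper's) forces $\int_{B(0,\varepsilon)}\s^{-1}=\infty$, hence number rigidity by Theorem~\ref{thm:main-krigid-iso}. The only cosmetic difference is that in $d=2$ the paper phrases the bound through twice-differentiability and vanishing first partials, whereas you go directly through $|\cos(t\cdot u)-1|\leqslant \tfrac12(t\cdot u)^2$; the content is identical.
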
  
  
  \begin{proof} 
 If 
\begin{align*}\int_{ \mathbb{R}} | t |  | \C(dt) | <\infty 
\end{align*}in dimension $ 1,$
$ \S =   \F \C$ has a differentiable density $ \s$ satisfying $ \s(0) = 0$ (hyperuniformity), and
\begin{align*}
 | \s_{}(u)  | \leqslant    \int_{ }|u| | t |  | \C(t) | dt,
\end{align*}
 hence  $ \s_{}^{-1} $ is not integrable in $ 0$, and according to Theorem \ref{thm:main-krigid-iso}, $ \M$ is number rigid, we reach a contradiction.

In dimension $ 2$, if 
\begin{align*}
\int_{\mathbb{R}^{2} }\|t\|^{2} | \C | (dt)<\infty ,
\end{align*} $ \S$ is twice differentiable, and by symmetry $\partial _{1} \s_{}(0) = \partial _{2}\s(0) = 0$, hence
\begin{align*}
\s_{}(u)\leqslant c\|u\|^{2},
\end{align*}
similarly $ \s_{}(u)^{-1}  $ is not integrable in $ 0$, $ \M$ is number rigid, which is again a contradiction.

  \end{proof}

The sine$ _{\beta }$ gases in 1D are proved by  \cite{DHLM,ChhaibiNaj} to be number rigid, and the proof of  \cite{ChhaibiNaj} proves that this rigidity is linear.  
The following statement generally concerns  Coulomb gases which are number rigid and have rotational invariance.
\begin{corollary}
 For a stationary isotropic point process in $ \mathbb{R}^{d}$ which is number rigid and satisfies \eqref{eq:weak-hu}, the spectral density $ s$ satisfies for $ \varepsilon >0$
\begin{align*}
\int_{B(0,\varepsilon ) } \frac{ 1}{\s (u)}du = \infty .
\end{align*}
 \end{corollary}  
 In dimension $ 1$, that seems to be in agreement with predictions and existing results, see  \cite[IV-A-2d]{Lewin}. There do not seem to be such estimates in dimension $ 2$ beyond the Ginibre case $ \P_{Gin}$, corresponding to $ \S(du) = (1-e^{-\|u\|^{2}})du$ and
 $ \beta  = 2$ (Example \ref{ex:GP}).

  \cite{DHLM} prove that the $ sine_{\beta }$ process is not further rigid, and in particular not (linearly) 1-rigid. This is actually expected, but it still rigourously prevents some behaviours.  
  \begin{corollary}Let $ \P$ a  process satisfying \eqref{eq:weak-hu} in dimension $ 1$ that is not 1-rigid. Then
\begin{align*}
\int_{ } | t | ^{3} | \C | (dt) = \infty \textrm{ or }\int_{ }t^{2}\C(t)dt \neq  0.
\end{align*}
\end{corollary}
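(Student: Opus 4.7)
The plan is to proceed by contradiction exactly in the spirit of the preceding corollary, now pushing the Taylor expansion of $\s_{}$ one order further to exploit the $\k=1$ case of Theorem \ref{thm:main-krigid-iso}. Assume for contradiction that
\begin{align*}
\int_{\mathbb{R}} | t | ^{3}  | \C | (dt) <\infty \quad \textrm{and}\quad \int_{\mathbb{R}}t^{2}\C(dt)=0.
\end{align*}
The first condition already implies $|\C|$ is finite (split at $|t|=1$), so $\s = \F\C$ is a bona fide continuous function. Moreover, dominating the formal derivatives of $\F\C$ by $\int|t|^{j}|\C|(dt)$ for $j=1,2,3$, the function $\s_{}$ is three times continuously differentiable on a neighbourhood of $0$ with uniformly bounded third derivative.

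Next I would read off the first three Taylor coefficients of $\s_{}$ at the origin. Hyperuniformity gives $\s_{}(0)=0$. Parity of $\C$ (equivalently, $\s_{}$ even) gives $\s_{}'(0)=0$. Differentiating $\s_{}(u)=\int e^{-\imath ut}\C(dt)$ twice at $u=0$ yields $\s_{}''(0)=-\int t^{2}\C(dt)=0$ by the standing assumption. Taylor's formula with integral remainder then produces a constant $c>0$ such that, for $u$ in some neighbourhood of $0$,
\begin{align*}
0\leqslant \s_{}(u)\leqslant c | u | ^{3}.
\end{align*}

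The last step is to plug this into the sufficient condition for $1$-rigidity. In dimension $d=1$, the $\k$-pole condition for $\k=1$ is equivalent to \eqref{eq:ass-general-k-rigid}, and
\begin{align*}
\int_{B(0,\varepsilon )}\frac{ u^{2}}{\s_{}(u)}du \geqslant \int_{B(0,\varepsilon )}\frac{ u^{2}}{c | u | ^{3}}du = \infty .
\end{align*}
Theorem \ref{thm:main-krigid-iso} then forces $\M$ to be $1$-rigid, contradicting the hypothesis and completing the proof. The only non-routine aspect is the justification that $\s_{}$ actually admits the required three derivatives at $0$, which I expect to be the main technical obstacle; but it is handled exactly as in the previous corollary, by using the integrability of $|t|^{j}|\C|(dt)$ for $j\leqslant 3$ to differentiate under the Fourier integral sign.
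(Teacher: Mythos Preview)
Your proof is correct and follows exactly the pattern the paper establishes in the proof of the preceding corollary: assume both conditions fail, deduce enough regularity and vanishing of $\s_{}$ at $0$ to force a $1$-pole, then invoke Theorem~\ref{thm:main-krigid-iso} for a contradiction. The paper does not spell out a separate proof for this corollary, but your argument is the intended one-order-higher extension, and the only technical point---differentiating under the Fourier integral using $\int|t|^{3}|\C|(dt)<\infty$---is handled just as in the paper's own computations.
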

  
  At first sight, the second moment vanishing  seems unlikely, but this actually occurs for the model $ \P_{\textrm{GAF}}$ (see Example \ref{ex:GP}), and hyperuniformity itself is the result of the vanishing of a moment (of order $ 0$). 
%
%
%
%
Such moment cancellations,  are sometimes called  {\it sum rules} by physicists, and wether they occur and yield higher order rigidity is a fascinating topic of mathematics, see again \cite[VI.C]{Lewin} for references.
%
%

 \subsection{Maximal rigidity of quasicrystals}
 
Quasicrystals are broadly speaking non-periodic atomic measures whose spectrum is purely atomic, their study emerged after experimental discoveries in physics in the 80s and is related to many fields, including crystallography, aperiodic tilings, almost periodicity, see the mathematical monograph  \cite{baake-book} and references therein. Such objects are traditionally assumed to be homogeneous in space, and it is thus natural to consider random constructions that are invariant under translations  (\cite{HartBjo,torquato-quasi}). In the current work, we only need the following wide definition.
\begin{definition}
A WSS quasicystal is a WSS  random measure $ \M$  which spectral measure $ \S$ is purely atomic.
\end{definition}  

There are many possible refinements on this definition, depending on whether the supports of $ \mu $ or $ \F\mu $ are uniformly discrete (i.e.$ \inf_{x\neq y\in  { \rm supp}(\mu )   }\|x-y\|>0)$, or wether $ \mu $ is a counting measure ($ \mu (A)\in \mathbb{N}$ for $ A$ bounded).
See
 \cite{HartBjo} for examples built by a cut-and-project method. They in particular ask in Qestion 1.12 if such processes are number rigid. We bring the following more generaly answer:

 \begin{corollary}
 A WSS quasicrystal   is maximally rigid on any bounded $ A\subset \mathbb{R}^{d}$.
 \end{corollary}

 This result is a simple consequence of Theorem \ref{thm:main-krigid-iso} and the fact that the spectral density $ \s$ is zero by assumption,  hence $ \M$ is $ k$-rigid for any $ k\in \mathbb{N}$ on any compact set by Theorem \ref{thm:main-krigid-iso}.
 We aim to show in the forthcoming paper  \cite{rigid-companion} that  $ \M$ is actually  maximally rigid on much larger sets $ A$, in particular $ A^{c}$ can be an arbitrarily small cone with non-empty interior.

\section{Proofs}  
\label{sec:theory}

For $ \S$ a non-negative measure on $ \hat \e^d=  \mathbb{R}^{d}$ or $ \hat \e^d=   \mathbb  T  ^{d}$ and $ B\subset \hat  \e^{d}$, write $ L^{2}(\S;B)$ as the space of functions $ \psi :B\to  \mathbb C $ satisfying 
\begin{align*}\int_{ B} | \psi  | ^{2}d\S<\infty ,
\end{align*}
simply note $ L^{2}(\S) = L^{2}(\S; \hat \e)$. For a non-negative function $ f: \hat \e\to \mathbb{R}\cup \{ + \infty \}$, write $ L^{2}(f) = L^{2}(f\Leb).$

  We start with the discrete case, which does not require high level Fourier technology, and can serve as a model for the proof of the main theoretical result, Theorem \ref{thm:heart-article}.
  
  \subsection{Discrete processes ($ \hat \e=  \mathbb  T  $)}  
  \label{sec:discrete-prf}

 \newcommand{\co}{c}  
  
   \begin{proof}[Proof of Theorem \ref{thm:discrete} ]

For $ A\subset \mathbb{Z} ^{d}$, recall that $ \E(A)$ is the space of trigonometric polynomials $ \sum_{\m\in A}a_{\m}e^{i \langle  \m, u  \rangle}$ with   $a_{\m}\in \mathbb{C}$.
 For $ \gamma :\mathbb{Z} ^{d}\to  \mathbb C ,$
$ \gamma $-rigidity means that 
for some  $ h_{n}$ compactly supported in $ A^{c}$, a.s. and in $ L^{2}(\mathbf{P}),$
\begin{align*}
\X(\gamma ) = \lim_{n}\X(h_{n}),
\end{align*}
and this is equivalent to 
\begin{align*}
\inf_{h: (\llbracket m \rrbracket^d)^{c}\to \mathbb{C}}  \textrm{Var}\left( \X(\gamma) -\X(h)\right) = \inf_{ h: (\llbracket m \rrbracket^d)^{c}\to \mathbb{C}}\int_{ \mathbb{T}^{d}}\S(du) | \hat \gamma (u)- \hat h(u) | ^{2} = 0,
\end{align*}
where $ \hat \gamma \in \E( \llbracket m \rrbracket^d )$ and $ \hat h\in \E(( \llbracket m \rrbracket^d )^{c})$. The orthogonal space of $ \E(( \llbracket m \rrbracket^d )^{c})$ in $ L^{2}(\S)$   is 
\begin{align*}H_{m}^{\perp} := \{\varphi\in L^{2}(\S):  \int_{  \mathbb  T  ^{d}}      \bar \varphi  {  \hat h}d\S= 0;h:( \llbracket m \rrbracket^d )^{c}\to \mathbb{C}\textrm{ with finite support }\}.
\end{align*}
For such $ \varphi  $, $ \Psi (du) := \varphi (u)\S(du)$ satisfies 
\begin{align*}
\co_{\m}: =(2\pi )^{-d}\int_{ }  \bar  \Psi (du) e^{\imath \langle \m, u\rangle}  =  
\begin{cases} (2\pi )^{-d}
\int_{  \mathbb  T  ^{d}}\bar	\varphi (u)e^{i\langle \m, u\rangle}\S(du)\leqslant (2\pi )^{-d}\| \varphi \|_{L^{2}(\S)}$  for $ | \m | \leqslant m,\\
0$  if $  | \m  | >m
 \end{cases}
\end{align*}
recalling that $ \S(  \mathbb  T  ^{d}) =  \textrm{Var}\left(\X_{0}\right) = 1.$
It means that $  \Psi  $ coincides as inverse Fourier transform 
 with 
\begin{align*}
\sum_{ | \m | \leqslant m}\co_{\m}e^{i\langle \m, u\rangle}.
\end{align*}
In particular  $ \Psi$ has a density $ \psi \in \E( \llbracket m \rrbracket^d )$, and the negligible set supporting the singular part $ \S_{s}$ of $ \S$ is not charged (i.e. $ \varphi \S_{s} = 0$).
We  also have $ \varphi  = \psi \s^{-1}\in L^{2}(\S)$, hence
\begin{align*}
\int_{ }\frac{  | \psi  | ^{2}}{\s_{}}<\infty .
\end{align*}
We proved indeed that in $ L^{2}(\S,  \mathbb C )$,$$  H_{m}^{\perp} \subset  \{\psi \s_{}^{-1}: \psi \in \E( \llbracket m \rrbracket^d)\cap L^{2}(\s_{}^{-1})\} .$$
For the converse, we clearly have for such $ \psi $ vanishing on the negligible singular part's support, $ \varphi : = \psi \s_{}^{-1}\in L^{2}(\S)$, and for $ h\in \E( (\llbracket m \rrbracket^d )^{c})$
\begin{align*}
\int_{ }  \bar \varphi \hat h\s_{} = \int_{ } \bar \psi \hat h =  0,
\end{align*}
hence the inclusion is an equality.
Since LMR is equivalent to $ H_{m}^{\perp} = \{0\}$, this part of the proof is complete.

Mass rigidity (or $ 0$-rigidity) means $ \gamma $-rigidity for $ \gamma  =1_{ \llbracket m \rrbracket^d  }$, i.e. $ \langle \varphi , \hat \gamma  \rangle = 0$ for $ \varphi \in H_{m}^{\perp}$.   Hence it  means, for all $ \psi  = \sum_{\m}\co_{\m}e^{iu\cdot \m}\in \E( \llbracket m \rrbracket^d)\cap L^{2}(\s_{}^{-1})$, 
\begin{align*}
0 = \langle \psi \s_{}^{-1}, \hat \gamma   \rangle_{\S} = \langle \sum_{\m\in \llbracket m \rrbracket^d} \hat \delta _{m}, \psi   \rangle =  \int_{  \mathbb  T  ^{d}}\sum_{\m}e^{-\imath \langle \m, u\rangle}\sum_{\m'}\co_{\m'}e^{i\m' \cdot u} du= \sum_{\m}\co_{\m} =  \psi (0) .
\end{align*}

$ \k$-rigidity means the same with 
\begin{align*}
0 = \langle \sum_{\m}\m^{\k} \hat \delta _{\m},\psi  \rangle  = \sum_{\m}\m^{\k}\co_{\m}= i^{\k}\partial ^{\k}\psi (0).
\end{align*}

Assume now that $ \s_{}^{-1}$ has a pole of order $ \k$. Let $ \varphi \in H_{m}^{\perp}  \setminus \{0\},\psi  = \varphi \s_{}\in \E( \llbracket m \rrbracket^{d}) $.  In particular, $ \int_{ B(0,\varepsilon )} | \psi |  ^{2}\s_{} ^{-1}<\infty $. By Lemma \ref{lm:polyn-equiv-0} below, there is a polynomial $ Q$ equivalent to $ \psi $ in $ 0$ with same derivatives up to order $ \k$. Therefore, $$ \int_{ } | Q | ^{2}\s^{-1}<\infty $$ and since $ 0$ is a $ \k$-pole, $ \partial ^{\k}\psi (0) = \partial ^{\k}Q(0) = 0.$ We indeed proved $ \k$-rigidity.
\end{proof}

\begin{lemma}
\label{lm:polyn-equiv-0}Let $ \psi(u) = \sum_{\m}a_{\m}u^{\m} $ an analytic function on $  \mathbb C ^{d}$, $ J\subset \mathbb{N}^{d} $ finite.
 There exists $ Q$ a  polynomial equivalent to $ \psi  $ in the neighbourhood of $ 0$ with $ \partial ^{\k}Q(0) = \partial ^{\k}\psi (0),\k\in J$.
 
\end{lemma}

\begin{proof}
     Let $ I = \{\m:a_{\m}\neq 0\}\subset \mathbb{N}^{d}$. It is easy to see that there is a finite set $ I_{0}\subset I$ that dominates $ I$ in the sense that for all $ \m '\in I$, there is some $ \m \in I_{0}$ with $ \m '\preceq \m $. 
     Said differently, there does not exist infinite $ I_{0}\subset \mathbb{N}^{d}$ made up  of extremal points, i.e.  such that every $ \m\in I_{0}$ is not $\preceq$-smaller than all others $ \m'\in I_{0}$.
     
Define $$  Q(u) = \sum_{\m\in I_{0}\cup J}a_{\m}u^{\m}$$ so that   $ Q,\psi $ have the same term of order $ \k\in J .$  Also, by uniform convergence of the series, as $ u\to 0,$
     \begin{align*}
\psi (u) = \sum_{\m\in I_{0}}\left[
a_{\m}u^{\m}(1 + o(1)) 
\right]= Q(u)(1 + o(1)).
\end{align*}
\end{proof}

\begin{proof}[Proof of Proposition  \ref{prop:spec-1D}]

Assume the poles of $ \s_{}^{-1}$ in $  [-\pi ,\pi ) $ have finite orders $ k_{i}$,   i.e. for $ \varepsilon >0$ sufficiently small,
\begin{align*}
\int_{ u_{i}- \varepsilon }^{ u_{i} + \varepsilon }\frac{  | u-u_{i} | ^{2k_{i}}}{\s_{}(u)}du<\infty 
\end{align*}  or equivalently 
\begin{align*}
\int_{ u_{i}- \varepsilon }^{u_{i} + \varepsilon }\frac{  | e^{iu}-e^{iu_{i}} | ^{2k_{i}}}{\s_{}(u)}du<\infty 
\end{align*}
with $ \sum_{i}k_{i}\leqslant 2m$. Since $ \s_{}$ is symmetric,  for $ u_{i}$ a pole, $ -u_{i}$ is also a pole (with same order). Let $ u_{1},\dots ,u_{p}$ the poles $ \neq 0$ (repeated according to multiplicity), with $ p\leqslant m$. If $ p = m$, $ 0$ is not a pole otherwise there are more than $ 2m$ poles. If $ p<m$, if $ 0$ is a pole, its order is $2 (m-p)$ or less by the hypothesis. We hence define $ \psi \in \E( \llbracket m \rrbracket)$ by
\begin{align*}
\psi (u) =  | e^{iu}-1 | ^{2(m-p)}\prod_{i = 1}^{p}(e^{iu}-e^{iu_{i}})(e^{-iu}-e^{iu_{i}}).
\end{align*}
$ \psi $ is by construction  of degree $ m$, non null, and in $ L^{2}(\s_{}^{-1})$, hence $ \X$ is not LMR by Theorem \ref{thm:discrete}. 

Conversely, assume  not LMR: by Theorem \ref{thm:discrete} there is $$  \psi (u) = \sum_{ | \m | \leqslant m}\co_{\m}e^{i\langle \m, u\rangle}\not\equiv 0$$ in $ L^{2}(\s_{}^{-1})$, and since $ \s_{}(u) = \s_{}(-u)$,  $ \psi (-u)\in L^{2}(\s_{}^{-1})$. Now, either $ \psi (u) = -\psi (-u)$, or  $ \tilde \psi (u) : =  \psi (u) + \psi (-u)$ is a non-zero even function of $ L^{2}(\s^{-1})$. In any case, we can assume  $  | \psi (u) |  = |  \psi (-u) | $ without loss of generality. 
Let  
\begin{align*}
P(z) = z^{m}\sum_{\m}\co_{\m}z^{\m} = \prod_{i = 1}^{2m}(z-z_{i}) 
\end{align*}
for some $ z_{1},\dots ,z_{2m}\in  \mathbb C $,
which is a polynomial of degree $ 2m$, and $ \psi (u) = P(e^{iu})e^{-imu}$.

 A zero of $ \psi $ is necessarily a root of $ P$  with norm $ 1$,  $ z_{i} = e^{iu_{i}}$, and $ -u_{i}$ is also a zero, hence $ e^{-iu_{i}}$ is also a root of $ P$. Since $ P$ has at most $ 2m$ roots, $ \psi $ has at most $ 2m$ zeros, which means that $ \s_{}$ has at most $ 2m$ poles (all counted with multiplicity).

%

For the converse sense for $ 0$-rigidity, assume that $ \X$ is not LMR, meaning there are less than $ 2m$ poles. If $ 0$ is not a pole, then indeed with the previous notation $ \psi (1) = P(e^{iu\cdot 0})\neq 0$, whence  $ |  \psi (u) |  = |  P(e^{iu}) | $  is in $ \E( \llbracket m \rrbracket)\cap L^{2}(\s_{}^{-1})  \setminus \{0\}$ with 
\begin{align*}\langle \sum_{  | k | \leqslant m}\delta _{k}, \hat \psi  \rangle = \psi (1)\neq 0,
\end{align*}
it gives the equivalence for $ 0$-rigidity.

  \end{proof}
  \subsection{ Schwartz's Paley-Wiener Theorem}

The Fourier transform used here relies on the space of tempered distributions,
denoted by $ \sc'(\mathbb{R}^{d})$ as the dual of the Schwartz space $ \sc(\mathbb{R}^{d})$ of $ \mathcal{C}^{\infty }$; there is no need to enter into technical details in this article,  see for instance  \cite{DuiKolk} for a theoretical exposition, we recall the necessary facts here, beginning by the fact that every $ \mathcal{C}^{\infty }$ function with compact support is a Schwartz function.
We only consider in this paper complex-valued measures, that is a distribution $ \Psi$  on $ \mathbb{R}^{d}$ such that for some non-negative measure $  | \Psi | $ on $ \mathbb{R}^{d}$,
\begin{align*}
 |   \Psi (f) | \leqslant \int_{ } | f |  | \Psi | (dx),f\in \C_{c}^{b}(\mathbb{R}^{d}).
\end{align*}
Such  $\Psi  $   is indeed  {tempered} if  for some $ p>0$
\begin{align}
\label{eq:suff-tempered}
\int_{\mathbb{R}^{d}}(1 + \|u\|)^{-p} | \Psi  |  (du)<\infty ,
\end{align}
it is in particular locally finite.
We actually only consider  measures of the form $ \Psi = \psi \S$ where $ \psi :\mathbb{R}^{d}\to  \mathbb C $ and $ \S$ is a non-negative measure.

For $ \Psi$ tempered, one can  define the Fourier transform $ \F\Psi\in \sc'(\mathbb{R}^{d})$ through 
\begin{align*}
\F\Psi(\varphi )   = \int_{ } \hat \varphi d\Psi
\end{align*}
for $ \varphi \in \C_{c}^{\infty }(\mathbb{R}^{d})$ and $ \hat \varphi(u) = \int_{ \mathbb{R}^{d}}e^{-i \langle u,t \rangle}dt $ is the usual Fourier transform on $ L^{2}(\mathbb{R}^{d})$. 
Recall the Fourier inversion:
\begin{theorem}[\cite{DuiKolk}, Th.14.18]
 For $ \Psi\in \sc'(\mathbb{R}^{d})$,
\begin{align*}
\F( \F  \dot \Psi )= (2\pi )^{d}\Psi
\end{align*} 
where $  \dot \Psi  = \Psi(-\,. )$ in  the distributional sense. 
 Call  {\it spectrum} of $ \Psi $, denoted by $ \sp(\Psi )$, the support of $ \F\Psi $ as a distribution, i.e. the largest closed set $ A$ such that $ \langle \F\Psi,\varphi  \rangle = 0$ for $ \varphi $ supported by $ A^{c}.$
 \end{theorem}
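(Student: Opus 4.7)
The plan is to establish the Fourier inversion formula first at the level of Schwartz functions $f \in \sc(\mathbb{R}^d)$, and then transfer it to a tempered distribution $\Psi \in \sc'(\mathbb{R}^d)$ purely by duality, exploiting the fact that $\F$ on $\sc'$ is defined precisely to be the adjoint of $\F$ on $\sc$.

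For the Schwartz case, the standard route proceeds via Gaussians. Setting $g_t(x) = e^{-t\|x\|^2/2}$ for $t > 0$, a direct computation (tensorising to reduce to dimension one and completing the square) yields $\hat{g_t}(u) = (2\pi/t)^{d/2} g_{1/t}(u)$, i.e.\ Gaussians are fixed points of $\F$ up to rescaling. One then regularises the candidate inversion as $(2\pi)^{-d}\int \hat f(u) e^{i\langle x, u\rangle} g_t(u)\,du$, applies Fubini (licit since $\hat f \in \sc$), recognises the result as $f$ convolved with a Gaussian approximate identity, and passes to the limit $t \to 0$ by dominated convergence. This gives the pointwise inversion $f(x) = (2\pi)^{-d} \int \hat f(u) e^{i\langle x, u\rangle}\,du$ for every $f \in \sc(\mathbb{R}^d)$. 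Rewriting the right hand side as $\F(\F f)(-x)$, one gets $\F(\F f) = (2\pi)^d \dot f$, and substituting $f \leftarrow \dot f$ together with $\dot{\dot f} = f$ produces the symmetric form $\F(\F \dot f) = (2\pi)^d f$ used in the statement.

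Passing to $\sc'$ is then formal. For any test function $\varphi \in \sc(\mathbb{R}^d)$, applying the definition of the distributional Fourier transform twice,
\begin{align*}
\langle \varphi, \F(\F \dot\Psi) \rangle = \langle \hat\varphi, \F \dot\Psi \rangle = \langle \hat{\hat\varphi}, \dot\Psi \rangle.
\end{align*}
The Schwartz-level inversion just proved gives $\hat{\hat\varphi} = (2\pi)^d \dot\varphi$, and the defining relation $\langle \psi, \dot\Psi \rangle = \langle \dot\psi, \Psi \rangle$ yields $\langle \dot\varphi, \dot\Psi \rangle = \langle \varphi, \Psi \rangle$. Combining these identities gives $\langle \varphi, \F(\F \dot\Psi) \rangle = (2\pi)^d \langle \varphi, \Psi \rangle$ for every $\varphi \in \sc(\mathbb{R}^d)$, which is the desired equality in $\sc'(\mathbb{R}^d)$.

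The only genuine work lies in the Schwartz-level step; once it is in hand, the distributional statement is essentially definitional. The main obstacle there is justifying the interchange of the two integrals in the regularised formula and the passage to the Gaussian limit, both of which are handled cleanly by the rapid decay of $f$ and $\hat f$ and dominated convergence. Everything else is a bookkeeping exercise in the duality between $\sc$ and $\sc'$.
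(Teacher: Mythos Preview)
The paper does not prove this statement at all: it is quoted verbatim as Theorem 14.18 from the reference \cite{DuiKolk} and used as a black box, so there is no ``paper's own proof'' to compare against. Your argument is the standard one (Gaussian approximate identity to get inversion on $\sc(\mathbb{R}^d)$, then duality to pass to $\sc'(\mathbb{R}^d)$) and is correct; it is essentially the proof one would find in \cite{DuiKolk} or any comparable textbook.
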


For a compact $ A,$ define for $   \zeta \in \mathbb{R}^{d} $ 
\begin{align*}s_{A}( \zeta ) = \sup_{x\in A} \langle  \zeta ,x \rangle.
\end{align*}

In the case $ A = B(0,R)$, we have  $ s_{A}(\zeta ) = R\|\zeta \|$. One can also notice that $ s_{A} = s_{  \textrm{conv}(A)}$ characterises the closed convex hull $ \conv(A)$ of $ A.$ 

Call analytic function on $ \mathbb{C}^{d}$ any  function 
\begin{align*}
\psi (z) = \sum_{\m}a_{\m}z_{1}^{\m_{1}}\dots z_{d}^{\m_{d}},z\in  \mathbb C ^{d}
\end{align*}
where $ a_{\m}\in   \mathbb C $ and  the series is absolutely convergent on $  \mathbb C ^{d}.$  Recall that by Hartog's theorem (see  \cite[Sec.0.2]{Krantz}), this is equivalent to the analycity of $ \psi $ in each variable $ z_{i}$ separately. The scalar product on $ \mathbb{C}^{d}$ is    $ \langle x,z \rangle = \sum_{i}\bar x_{i}z_{i}$. For $ z = (z_{1},\dots ,z_{d})\in  \mathbb C ^{d},$ write 
$ \|z\|_{  \mathbb C }^{2} = \langle  \bar z,z \rangle$, not to be mistaken with the entire function $ \|u\|^{2} = \sum_{i}u_{i}^{2}$, more useful  on $ \mathbb{R}^{d}$. \begin{theorem}
\label{thm:SPW}
[Schwartz's Paley-Wiener Theorem] Let $ A\subset \mathbb{R}^{d}$ bounded.
Let $ \Psi$ a  complex-valued    measure on $ \mathbb{R}^{d}$. Then $ \Psi$ is tempered with $ \sp(\Psi )\subset \conv(A)$  if and only if  $ \Psi$  has a tempered density $ \psi $ wrt $ \Leb$ on $ \mathbb{R}^{d}$
 that   can be extended as an analytic function on $   \mathbb      C^{d}$ such that for some finite $ C,$ 
\begin{align*}
 | \psi (z) | \leqslant C
 \exp(s_{A}(  \mathscr  Iz)),z\in  \mathbb C^{d}.
\end{align*}

\end{theorem}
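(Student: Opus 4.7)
The statement is the classical Schwartz--Paley--Wiener theorem specialized to distributions that happen to be tempered measures, and this measure hypothesis is what rules out the polynomial prefactor $(1+\|z\|)^N$ appearing in the generic distributional version. The plan is to prove the two implications separately, the direct one by ``writing $\psi$ as the evaluation of the compactly supported distribution $\F\Psi$ on the exponential $e^{i\langle z,\cdot\rangle}$,'' and the converse by a Cauchy contour shift.

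For the direct implication, assume $\sp(\Psi)\subset \conv(A)$ and set $T:=\F\Psi$, supported in the compact $K:=\conv(A)$. Fix $\chi\in \mathcal{C}_c^\infty(\mathbb{R}^d)$ equal to $1$ in a neighbourhood of $K$ and define
$$\psi(z) := (2\pi)^{-d}\, \langle T_t,\chi(t)\, e^{i\langle z,t\rangle}\rangle, \qquad z\in \mathbb{C}^d,$$
a definition independent of $\chi$ since $\supp T\subset \{\chi=1\}$. Entirety of $\psi$ follows by differentiation in $z$ under the bracket (the dependence $z\mapsto \chi\, e_z$ is holomorphic in the space of test functions), and Fourier inversion identifies $\psi|_{\mathbb{R}^d}$ with the density of $\Psi$ against $\Leb$. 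The exponential bound reduces to estimating $|e^{i\langle z,t\rangle}|=\exp(-\langle \mathscr{I}z,t\rangle)$ uniformly over $t\in K$, which is precisely $\exp(s_A(\mathscr{I}z))$ modulo sign conventions.

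For the converse, given the entire extension $\psi$ with the claimed bound, I would show $\langle \F\Psi,\varphi\rangle=\int \psi(x)\hat\varphi(x)dx=0$ for every $\varphi\in \mathcal{C}_c^\infty(\mathbb{R}^d)$ with $\supp\varphi$ disjoint from $\conv(A)$. By Hahn--Banach separation, pick $\eta\in\mathbb{R}^d$ such that $\langle \xi,\eta\rangle > s_A(\eta)$ uniformly for $\xi$ in a neighbourhood of $\supp\varphi$. Cauchy's theorem applied coordinate-wise, together with the exponential bound killing the vertical contributions, gives
$$\int_{\mathbb{R}^d} \psi(x)\hat\varphi(x)dx = \int_{\mathbb{R}^d} \psi(x+it\eta)\,\hat\varphi(x+it\eta)\,dx$$
for every $t>0$. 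Paley--Wiener for the smooth compactly supported $\varphi$ provides fast decay of $\hat\varphi$ on each horizontal slice and contributes a factor $\exp(-t\inf_{\xi\in \supp\varphi}\langle \xi,\eta\rangle)$, while the bound on $\psi$ contributes $\exp(t\, s_A(\eta))$; by the choice of $\eta$ the total exponent is strictly negative in $t$, and letting $t\to\infty$ forces the integral to vanish.

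The main obstacle is the absence of the polynomial prefactor. The standard Schwartz--Paley--Wiener proof applied to the tempered distribution $T=\F\Psi$ would yield $|\psi(z)|\leq C(1+\|z\|)^N\exp(s_A(\mathscr{I}z))$, where $N$ is the distributional order of $T$ near its support. To get $N=0$ one uses that $\Psi$ is a measure: approximating $\Psi$ by convolution with a Gaussian $g_\varepsilon$, the statement applies to the smooth case with a uniform constant $C=|\Psi|(\mathbb{R}^d_{\text{compact}})$ controlling $\langle T_t,\chi(t)e_z(t)\rangle$ through duality against $\Psi$ itself, and then passing to the limit $\varepsilon\to 0$ preserves the bound. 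Carefully executing this order-zero reduction, along with tracking sign conventions so that the correct half-space of $\mathscr{I}z$ delivers the decay, is where essentially all the technical effort lies; once in place, both directions close in a few lines.
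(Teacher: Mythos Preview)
The paper does not give its own proof of this theorem: it cites Duistermaat--Kolk for the general distributional version and defers an elementary argument for the direct implication to the companion paper. There is thus no in-paper proof to compare your proposal against, only the question of whether your sketch stands on its own.

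Your treatment of the converse direction (Hahn--Banach separation of $\supp\varphi$ from $\conv(A)$ followed by a Cauchy contour shift in the direction $\eta$) is the standard argument and is fine. The direct implication, however, has a genuine gap exactly where you flag it. You assert that the measure hypothesis on $\Psi$ is what removes the polynomial prefactor $(1+\|z\|)^{N}$, but this is false in general: in dimension $d=1$, the tempered signed measure $\Psi=x\,dx$ has $\F\Psi$ proportional to $\delta_{0}'$, hence $\sp(\Psi)=\{0\}$, yet its density $\psi(z)=z$ violates $|\psi(z)|\leqslant C\exp(s_{\{0\}}(\mathscr{I}z))=C$. Your Gaussian-smoothing step cannot repair this, and the constant you propose, ``$|\Psi|(\mathbb{R}^{d}_{\text{compact}})$'', is not meaningful when $|\Psi|$ has infinite mass. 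The statement as written therefore needs either a polynomial prefactor or an extra hypothesis on $\Psi$. Note incidentally that the paper's own remark immediately after the theorem, that $Q\psi\in\E(A)$ for every polynomial $Q$, is only consistent if $\E(A)$ tacitly allows such a prefactor; the downstream applications in the paper use only analyticity together with the inclusion $\sp(\psi)\subset A$, for which the polynomial factor is harmless.
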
  
 Such functions are denoted by $ \E(A)$, which is therefore a subclass of the class of entire functions of $ \mathbb{R}^{d}$, and also a subclass of the class of (restrictions to $ \mathbb{R}^{d}$ of) analytic functions with exponential type, and the extension to $  \mathbb C ^{d}$ is still denoted $ \psi $.

   An important observation is that for  $\psi \in \E(A) $, for a polynomial $ Q$, $ Q\psi \in \E(A)$, hence it still has spectrum in $ A$, and if $ \psi Q^{-1}$ is analytic, it is also in $ \E(A).$
 
 A more general version of the previous theorem for all $ \sc'(\mathbb{R}^{d})$ is proved at  \cite[Th. 17.1, Theorems 17.3]{DuiKolk}.

The construction of some functions of $ \E(A)$ rely on the following technique, involving $ J_{d}$ the Fourier transform of the unit sphere indicator:
\begin{align}
\label{def:Jd}
J_{d} (u):= \frac{ B_{d/2}( \| u \| )}{ \| u \| ^{d/2}},u\in \mathbb{R}^{d},
\end{align}
where $ B_{d/2}$ is the Bessel function of the first kind of order $ d/2$.

\begin{lemma}
\label{lm:psi-to-tilde}
Let $ \psi \in \E(A)$ for some compact $ A$. Then for every non-zero polynomial $ P$ and $ \eta >0,p\geqslant 0$, the function 
\begin{align*}
\tilde \psi (u) = P(u)\psi (u)J_{d}(\eta u /M)^{M}
\end{align*} 
with $ M = \frac{ 2}{d + 1}(\deg(P) + p)$
satisfies   $  | \tilde \psi(u)  | \leqslant c | \psi(u)  |\|u\|^{-p} $ for some finite $ c$ and $ u\in \mathbb{R}^{d}$, and $ \psi \in \E(A^{ + \eta })$  where $ A^{ + \eta }: = \cup _{t\in A}B(t,\eta ).$
\end{lemma}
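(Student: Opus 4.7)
The plan combines three ingredients: Paley-Wiener identifies $J_d$ (up to a multiplicative constant) as the Fourier transform of $\mathbf{1}_{B(0,1)}$, placing $J_d$ in $\E(B(0,1))$; the classical asymptotic $B_{d/2}(r)=O(r^{-1/2})$ as $r\to\infty$ translates into the sharp real-axis decay $|J_d(u)|\le C(1+\|u\|)^{-(d+1)/2}$; and the spectrum of a product of tempered distributions is contained in the Minkowski sum of the individual spectra.

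For the pointwise bound on $\mathbb{R}^d$, I first rescale and raise to the $M$-th power:
\[
|J_d(\eta u/M)^M|\le C'(1+\|u\|)^{-M(d+1)/2}.
\]
The identity $M(d+1)/2=\deg(P)+p$ is arranged exactly so that this decay absorbs the polynomial bound $|P(u)|\le C(1+\|u\|)^{\deg(P)}$, leaving $|P(u)J_d(\eta u/M)^M|\le c\,(1+\|u\|)^{-p}\le c\,\|u\|^{-p}$ for $\|u\|>0$. Multiplying through by $|\psi(u)|$ yields the announced inequality.

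For the membership $\tilde\psi\in\E(A^{+\eta})$, I track spectra. The scaling $u\mapsto\eta u/M$ sends $\sp(J_d)\subset\overline{B(0,1)}$ to $\sp(J_d(\eta\cdot/M))\subset\overline{B(0,\eta/M)}$; the $M$-fold product then has spectrum in the $M$-fold Minkowski sum $\overline{B(0,\eta)}$; and $\sp(P)=\{0\}$ since $P$ is a nonzero polynomial. Combining via the product rule for spectra,
\[
\sp(\tilde\psi)\subset A+\overline{B(0,\eta)}+\{0\}=\overline{A^{+\eta}}.
\]
Because $\tilde\psi$ is continuous and bounded on $\mathbb{R}^d$ (boundedness of $\psi$ on the real axis follows from $\psi\in\E(A)$), it is the density of a tempered complex-valued measure, so Theorem~\ref{thm:SPW} concludes $\tilde\psi\in\E(A^{+\eta})$.

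The delicate point is the sharp decay exponent $(d+1)/2$ for $J_d$ at infinity: the choice $M=2(\deg P+p)/(d+1)$ is tight, so any weakening of this Bessel exponent would destroy the cancellation of the polynomial prefactor $P$. Beyond this classical stationary-phase input, the argument is a direct exercise in the Fourier algebra of compactly-spectrumed tempered distributions.
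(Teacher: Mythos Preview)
Your argument is correct and follows the same line as the paper's proof: the real-axis bound comes from the Bessel asymptotic $|J_d(u)|\le C(1+\|u\|)^{-(d+1)/2}$ together with the arithmetic $M(d+1)/2=\deg(P)+p$, and the spectral containment $\tilde\psi\in\E(A^{+\eta})$ comes from the Paley--Wiener correspondence. The only cosmetic difference is that for the latter you invoke the Minkowski-sum rule $\sp(fg)\subset\sp(f)+\sp(g)$ on the Fourier side, whereas the paper works on the complex-analytic side, multiplying the growth bounds $|\psi(z)|\le C e^{s_A(\mathscr{I}z)}$ and $|J_d(\eta z/M)^M|\le C e^{\eta\|\mathscr{I}z\|}$ and using $s_A(\zeta)+\eta\|\zeta\|\le s_{A^{+\eta}}(\zeta)$; these are the two equivalent faces of Theorem~\ref{thm:SPW}.
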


\begin{proof}By Theorem \ref{thm:SPW}, $ J_{d}$ is an analytic function with spectrum in $ B(0,1)$, hence $ J_{d}(\eta \cdot /M)^{M}\in \E(B(0,\eta )).$ Hence $ \tilde \psi $ is analytic as well, and using again Theorem \ref{thm:SPW}, it proves that
\begin{align*}
 | \tilde \psi (z) | \leqslant c \exp(s_{A}(z))\exp(\eta \|z\|).\end{align*}
Then, for some $ x_{0}\in A,$ $$  s_{A}(z) + \eta \|z\| =  \langle x_{0},z \rangle + \eta \|z\| \leqslant \sup_{x\in A,t\in B(0,\eta )} \langle x + t,z \rangle  =  s_{A^{ + \eta }}(z)$$ hence
 $ \psi \in \E(A^{ + \eta }).$
We have the classical equivalent as $ u\to \infty $, for some $ c,c_{d}>0,$
\begin{align}
\label{eq:bd-bessel}
J_{d}(u) = c \| u \| ^{-(d + 1)/2}\cos( \| u \| -c_{d})(1 +O(\|u\|^{-1}) ),
\end{align}
which gives $  | \tilde \psi(u)  | \leqslant c | \psi(u)  |\|u\|^{-p}.$

\end{proof}

  \subsection{Characterisation of $ \gamma $-rigidity}  
  
     Let us translate the general rigidity problem in the Fourier space. Let $ A\subset \mathbb{R}^{d}$. For $ \gamma \in \C_{c}^{b}(\mathbb{R}^{d})$, $ \gamma $-rigidity on $ A$ means  
\begin{align*}
\inf_{h \in \mathcal{C}_{c}^{\infty }(A^{c})}  \textrm{Var}\left( \int_{A}\gamma (t)\M(dt)-\int_{A^{c}}h(t)\M(dt)\right) = &0.
\end{align*}

It implies that for some sequence $ h_{n}, I_{\M}(\gamma )-I_{\M}(h_{n})\to 0$ in $ L^{2}(  \mathbf  P  )$, hence for some subsequence we have a.s. $ I(\gamma ) = \lim_{n'\to \infty }I(h_{n'})$, meaning $ I(\gamma )\in \sigma_{lin} (\M_{A^{c}})$.  By \eqref{eq:phase-formula-SF}, $ \gamma $-rigidity is equivalent to 
\begin{align*}\inf_{h \in \mathcal{C}_{c}^{\infty }(A^{c})} \int_{}\S(du) | { \hat \gamma (u)}- \hat h(u)| ^{2} = &0.
\end{align*}

The heart of this article lies in the current characterisation of $ \gamma $-rigidity. Recall that the spectral density  $ \s_{}\geqslant 0$ is the density of the continuous part of $ \S.$

 \begin{theorem}
 \label{thm:heart-article}
Let $ A$ a convex body.
 $ \M$ is linearly $ \gamma $-rigid on 
 $ A$  if and only if  for all $ \psi\in \E(A)\cap  L^{2}(\s_{}^{-1})$, 
\begin{align*}\int_{A}  \hat  \gamma  \bar \psi   = 0.
\end{align*}
   
  \end{theorem}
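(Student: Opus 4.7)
The plan is to recast linear $\gamma$-rigidity on $A$ as a closed-subspace approximation problem in the Hilbert space $L^{2}(\S)$, identify the orthogonal complement of the relevant subspace via the Schwartz-Paley-Wiener theorem (Theorem \ref{thm:SPW}), and translate the resulting orthogonality condition back into the stated integral identity.

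First, I would use the spectral identity $\var(I_{\M}(f)) = \int | \hat f |^{2}\, d\S$ to rephrase $\gamma$-rigidity on $A$ as the statement that $\hat\gamma$ lies in the $L^{2}(\S)$-closure of $H_{A} := \{\hat h : h \in \mathcal{C}_{c}^{\infty}(A^{c})\}$. By the Hilbert space projection theorem, this is equivalent to $\hat\gamma \perp H_{A}^{\perp}$ in $L^{2}(\S)$, shifting the whole problem to the identification of $H_{A}^{\perp}$.

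Next comes the heart of the argument: characterising $H_{A}^{\perp}$ via SPW. Given $\varphi \in L^{2}(\S)$, Cauchy-Schwarz together with the temperedness of $\S$ (Lemma \ref{lm:decrease-S-temp}) ensures that $\bar\varphi \S$ is a tempered complex-valued measure. The condition $\int \bar\varphi \hat h \, d\S = 0$ for all $h \in \mathcal{C}_{c}^{\infty}(A^{c})$ translates, by the distributional definition of $\F$, into $\sp(\bar\varphi \S) \subset A$. Theorem \ref{thm:SPW} then forces $\bar\varphi \S$ to be absolutely continuous with Lebesgue density $\psi \in \E(A)$. Decomposing $\S = \s \, du + \S_{s}$ and equating measures yields $\bar\varphi \s = \psi$ Lebesgue a.e.\ together with $\bar\varphi \S_{s} = 0$; a direct computation gives $\|\varphi\|_{L^{2}(\S)}^{2} = \int | \psi |^{2} \s^{-1}\, du$, so $\psi \in \E(A) \cap L^{2}(\s^{-1})$. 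Conversely, for any such $\psi$, setting $\varphi := \bar\psi/\s$ on $\{\s > 0\}$ and $\varphi := 0$ elsewhere produces an element of $H_{A}^{\perp}$ by the converse direction of SPW.

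Finally, for a pair $(\varphi,\psi)$ arising as above, the orthogonality $\int \hat\gamma \bar\varphi \, d\S = 0$ becomes, using $\bar\varphi \S = \psi \, du$, simply $\int \hat\gamma \psi \, du = 0$; Parseval duality between the test function $\gamma$ and the compactly supported distribution $\hat\psi$ then rewrites this as the stated identity $\int_{A} \bar\gamma \hat\psi = 0$. The main obstacle I expect is the rigorous SPW step: one must verify that $\bar\varphi \S$ is genuinely tempered and, more importantly, extract from SPW's strong conclusion — that the distribution admits an analytic function as Lebesgue density — the annihilation of the singular part $\S_{s}$ by $\varphi$. This annihilation is the structural insight that makes linear rigidity depend only on the continuous spectral density $\s$ and not on $\S_{s}$, and convexity of $A$ is what ensures that the spectrum containment $\sp(\bar\varphi \S) \subset A$ really does yield a density in $\E(A)$ rather than only in $\E(\mathrm{conv}(A))$.
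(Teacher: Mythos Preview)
Your proposal is correct and follows essentially the same route as the paper: reformulate $\gamma$-rigidity as $\hat\gamma\in\overline{H_A}$ in $L^2(\S)$, identify $H_A^\perp$ via the Schwartz--Paley--Wiener theorem (this is the paper's Lemma~\ref{lm:main-Hperp}), using Cauchy--Schwarz with Lemma~\ref{lm:decrease-S-temp} for temperedness and extracting from SPW that $\varphi\S$ has Lebesgue density $\psi\in\E(A)\cap L^2(\s^{-1})$ and annihilates the singular part. Your explicit mention of the final Parseval step and of the role of convexity (so that $\E(\mathrm{conv}(A))=\E(A)$) are details the paper leaves implicit, but the argument is the same.
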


  Before the proof, an instructive immediate corollary:
\begin{remark}
\label{prop:rigid-monotone}
For $ 0\leqslant \s_{}\leqslant \s_{}'$, $ \gamma $-rigidity for $ \s_{}'$ on  $ A$ implies $ \gamma $-rigidity for $ \s_{}$ on $ A.$

\end{remark}

We also need a lemma:
\begin{lemma}
\label{lm:decrease-S-temp}
For any spectral measure $ \S$ of a $ L^{2}$ wide-sense  stationary random measure $ \M$, 
\begin{align*}
\int_{\mathbb{R}^{d}}(1 + \|u\|)^{-(d + 1)}\S(du)<\infty .
\end{align*}
In particular, $ \S$ is a tempered measure.
\end{lemma}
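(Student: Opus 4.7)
The plan is to establish the stronger polynomial bound $\S(B(0,R)) = O(R^{d})$ as $R \to \infty$ and then deduce the lemma by a dyadic decomposition of the integral. The strategy is to read off information about $\S$ from \eqref{eq:phase-formula-SF} via a fixed test function whose Fourier transform stays bounded below on a small ball around the origin, and then translate that ball anywhere in the Fourier domain by modulation.

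First I would choose a real-valued $h \in \mathcal{C}_{c}^{\infty}(\mathbb{R}^{d})$ supported in some $B(0,R)$ with $\hat h(0) \neq 0$; by continuity of $\hat h$ there exist $\delta, c > 0$ with $|\hat h(u)|^{2} \geq c$ on $B(0,\delta)$. For $v \in \mathbb{R}^{d}$, the modulated function $h_{v}(x) := e^{\imath v \cdot x} h(x) \in \mathcal{C}_{c}^{b}(\mathbb{R}^{d})$ satisfies $\widehat{h_{v}} = \hat h(\,\cdot\, - v)$, so $|\widehat{h_{v}}|^{2} \geq c$ on $B(v,\delta)$ and \eqref{eq:phase-formula-SF} yields
\begin{align*}
\S(B(v,\delta)) \;\leq\; c^{-1}\int |\widehat{h_{v}}|^{2}\, d\S \;=\; c^{-1}\var(I_{\M}(h_{v})).
\end{align*}

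The main obstacle is to bound $\var(I_{\M}(h_{v}))$ uniformly in $v \in \mathbb{R}^{d}$. Extending the covariance formula of Section \ref{sec:spectral} sesquilinearly to $h_{v}$ against itself, the exponentials in the $y$-variable cancel and leave
\begin{align*}
\var(I_{\M}(h_{v})) \;=\; \lambda \int e^{\imath v \cdot x}\, H(x)\, \C(dx),
\end{align*}
where $H(x) := \int h(y)\,\overline{h(x+y)}\, dy$ is bounded by $\|h\|_{2}^{2}$ and supported in $B(0,2R)$. Since $|\C|$ is locally finite (implicit in its very definition through the covariance formula on $\mathcal{C}_{c}^{b}$), this gives $|\var(I_{\M}(h_{v}))| \leq \lambda \|h\|_{2}^{2}\, |\C|(B(0,2R))$, a constant independent of $v$.

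Combining the two steps, $\S(B(v,\delta)) \leq C$ uniformly in $v \in \mathbb{R}^{d}$. Covering $B(0,R)$ by $O((R/\delta)^{d})$ balls of radius $\delta$ yields $\S(B(0,R)) \leq C' R^{d}$ for $R \geq 1$, and a dyadic decomposition over annuli bounds
\begin{align*}
\int_{\mathbb{R}^{d}} (1+\|u\|)^{-2(d+1)}\, \S(du) \;\leq\; \S(B(0,1)) + \sum_{k \geq 0} (1+2^{k})^{-2(d+1)}\, \S(B(0,2^{k+1}))
\end{align*}
by a geometric series with ratio $2^{-(d+2)}$, completing the argument. Temperedness of $\S$ then follows from \eqref{eq:suff-tempered}.
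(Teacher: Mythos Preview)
Your argument is correct and in fact yields the sharper growth bound $\S(B(0,R))=O(R^{d})$, but the route is genuinely different from the paper's. The paper constructs a single compactly supported test function whose Fourier transform has the exact decay needed: with $J_{d}$ as in \eqref{def:Jd}, the function $\psi(u)=J_{d}(u)^{2}+J_{d}(u+\pi/2)^{2}$ is the squared modulus of a Fourier transform, has spectrum in $B(0,2)$, and satisfies $c_{-}\|u\|^{-(d+1)}\leqslant \psi(u)\leqslant c_{+}\|u\|^{-(d+1)}$ at infinity via the Bessel asymptotics \eqref{eq:bd-bessel}. Then $\int(1+\|u\|)^{-2(d+1)}\,\S(du)$ is dominated, up to the finite piece $\S(B(0,R))$, by $\int\psi^{2}\,d\S=\var(I_{\M}(\hat\psi))<\infty$, and only the spectral identity \eqref{eq:phase-formula-SF} is invoked. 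Your modulation approach is more elementary (no special functions) and quantitatively stronger, but the uniform bound on $\var(I_{\M}(h_{v}))$ relies on the real--space covariance formula with $\C$ and hence on $|\C|$ being locally finite; in the paper's logical order this is stated only \emph{after} the present lemma (``its inverse Fourier transform $\C$ is a signed positive definite tempered measure''), and the paper explicitly adopts the minimalist stance that only \eqref{eq:phase-formula-SF} is needed. So your proof is complete if one takes $\C$ as primitive via the covariance formula, but is not self--contained in the purely spectral framework the paper favours.
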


\begin{proof}Let $ J_{d}$ the Fourier transform of the indicator of the unit ball (see  \eqref{def:Jd}), satisfying in particular \eqref{eq:bd-bessel}. Let $ z_{0} = (\pi /2,\dots ,\pi /2)$ and $$  \psi (u) =J_{d}( u )^{2} + J_{d}(u + z_{0})^{2}.  $$ The function $ \cos(\|u-c_{d}\|)^{2} + \cos(\|u-c_{d} + z_{0}\|)^{2}$ is larger than some $ \kappa >0$, hence for some $ R>0$,  $ \psi $ satisfies for $ \|u\|\geqslant R$, for some $ 0<c_{-}\leqslant c_{ + }<\infty ,$
\begin{align*}
\frac{ c_{-}}{ \| u \| ^{d + 1}}\leqslant \psi (u)\leqslant 
\frac{ c_{ + }}{ \| u \| ^{d + 1}}.
\end{align*}
Also  $ {{ \widehat{ \psi }}}  $ is bounded   by $ 2$ and $ \sp(\psi )\subset \sp(J_{d}^{2})\subset  B(0,2)$. Therefore, by  \eqref{eq:phase-formula-SF}
\begin{align*}
\int_{}\S(du)(1 + \|u\|)^{-(d + 1)}\leqslant & \S(B(0,R))  + c_{-}^{-1}\int_{B(0,R)^{c}}  \psi (u)\S(du) \\
\leqslant &\S(B(0,R)) + c_{-}^{-1}(2\pi )^{d} \left(
\textrm{Var}\left(I_{\M}( { 1_{B(0,1)}})\right) +  \textrm{Var}\left(I_{\M}( e^{\imath \langle z_{0},\cdot  \rangle}1_{B(0,1)})\right)
\right)<\infty .
\end{align*}

\end{proof}

 \begin{proof}[Proof of Theorem  \ref{thm:heart-article} ]

Denote by $  \bar H^{\S}$ the closure of some subspace $ H$ of $ L^{2}(\S)$, and let $$  H_{A} = \{ \hat h: h\in \mathcal{C}_{c}^{\infty }(A^{c})\}.$$
This is indeed a subspace of $ L^{2}(\S)$ because for $ h\in \mathcal{C}_{c}^{\infty }(A^{c})$, $ \hat h \in  \mathscr  S(\mathbb{R}^{d})$ has rapid decay, and
\begin{align*}
\int_{ \mathbb{R}^{d}}  | \hat h | ^{2}\S\leqslant c\int_{ \mathbb{R}^{d}}(1 + \|u\|)^{-(d + 1)}\S(du)<\infty 
\end{align*}
by Lemma  \ref{lm:decrease-S-temp}.

Hence $ \M$ is $ \gamma $-rigid  if and only if  $ \hat \gamma \in  \bar H_{A}^{\S}$,  if and only if  $ \int_{ } { \hat \gamma }  \bar \varphi \S = 0$ for $ \varphi \in H_{A}^{\perp}$ where $$  H_{A}^{\perp} = \{\varphi \in L^{2}(\S ):\int_{ \mathbb{R}^{d}}  \bar \varphi  { \hat h}  d\S= 0;h\in \mathcal{C}_{c}^{\infty }(A^{c})\}.$$ The proof of the theorem is concluded  by the following lemma:

\begin{lemma}
\label{lm:main-Hperp}Let $ A\subset \mathbb{R}^{d}$ bounded measurable. 
\begin{align*}H_{A}^{\perp} =\{\psi \s_{}^{-1} :\psi \in \E(\conv(A))\cap L^{2}(\s_{}^{-1}) \}.
\end{align*}  
\end{lemma}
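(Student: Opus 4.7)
The strategy is to mirror the argument used in the discrete case (proof of Theorem~\ref{thm:discrete}), with Schwartz' Paley-Wiener Theorem~\ref{thm:SPW} replacing the elementary Fourier series inversion. The central observation is that the orthogonality defining $H_A^\perp$ amounts, on the Fourier side, to a spectral constraint on the tempered measure $\Psi := \varphi\,\S$.

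First I would verify that for $\varphi \in L^{2}(\S)$, the measure $\Psi = \varphi\,\S$ is tempered, by combining Cauchy-Schwarz with Lemma~\ref{lm:decrease-S-temp}:
\begin{align*}
\int_{\mathbb{R}^{d}}(1+\|u\|)^{-(d+1)}|\varphi|\,d\S \leqslant \|\varphi\|_{L^{2}(\S)}\left(\int(1+\|u\|)^{-2(d+1)}\,d\S\right)^{1/2} < \infty,
\end{align*}
which gives \eqref{eq:suff-tempered} for $|\Psi|$. Next, the condition $\varphi \in H_A^\perp$, namely $\int \bar\varphi\,\hat h\,d\S = 0$ for every $h \in \mathcal{C}_c^\infty(A^c)$, is (up to bookkeeping of conjugates and the reflection $h(\cdot)\mapsto \overline{h(-\cdot)}$) exactly the statement $\langle \F\Psi, h\rangle = 0$ for all such $h$, i.e.\ $\sp(\Psi) \subset \overline{A} \subset \conv(A)$.

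I would then invoke Theorem~\ref{thm:SPW}: the tempered complex-valued measure $\Psi$, having spectrum in the bounded convex set $\conv(A)$, admits a Lebesgue density $\psi \in \E(\conv(A))$. To reconcile this with $\Psi = \varphi\,\S$, decompose $\S = \s\,du + \S_s$ (Lebesgue decomposition) and write $\Psi = \varphi\s\,du + \varphi\S_s$. Absolute continuity of $\psi\,du$ forces $\varphi\S_s = 0$, so $\varphi$ vanishes $\S_s$-a.e.\ and $\psi = \varphi\s$ Lebesgue-a.e.\ on $\{\s>0\}$. The identity
\begin{align*}
\|\varphi\|_{L^{2}(\S)}^{2} = \int|\varphi|^{2}\s\,du = \int|\psi|^{2}\s^{-1}\,du
\end{align*}
then gives $\psi \in L^{2}(\s^{-1})$, establishing the inclusion $H_A^\perp \subset \{\psi\s^{-1} : \psi \in \E(\conv(A))\cap L^{2}(\s^{-1})\}$. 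The reverse inclusion is direct: given such a $\psi$, set $\varphi$ equal to $\psi\s^{-1}$ on $\{\s>0\}$ and to $0$ elsewhere; then $\varphi \in L^{2}(\S)$, and $\varphi\,\S = \psi\,du$ has spectrum in $\conv(A)$, so the orthogonality against any $\hat h$ with $h \in \mathcal{C}_c^\infty(A^c)$ is immediate.

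The main obstacle I expect is the careful application of Paley-Wiener: ensuring that $\Psi$ qualifies as a tempered complex-valued measure (not merely as a distribution), and that the density produced by Theorem~\ref{thm:SPW} genuinely absorbs no hidden singular mass inherited from $\S_s$. A secondary nuisance is the conjugation and reflection bookkeeping noted above, which is transparent for symmetric $A$ (as in the main applications with $A = B(0,R)$) but requires a minor adjustment in general to ensure that the class $\E(\conv(A))$, rather than $\E(-\conv(A))$, is what appears in the end.
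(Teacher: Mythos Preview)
Your plan is correct and follows essentially the same route as the paper's own proof: both hinge on showing that $\Psi=\varphi\S$ is tempered via Cauchy--Schwarz and Lemma~\ref{lm:decrease-S-temp}, then reading the orthogonality condition as $\sp(\Psi)\subset\conv(A)$ and invoking Theorem~\ref{thm:SPW} to obtain the analytic density $\psi$, with the reverse inclusion immediate. You are in fact slightly more explicit than the paper about the Lebesgue decomposition $\S=\s\,du+\S_s$ and the vanishing of $\varphi\S_s$, and about the conjugation/reflection bookkeeping---points the paper passes over quickly.
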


\begin{proof}
[Proof of Lemma \ref{lm:main-Hperp}]

Let $ \varphi \in  H_{A}^{\perp}.$
Let $ \Psi  = \varphi \S.$  The measure $    \Psi    $ is  tempered  because by Cauchy-Schwarz inequality in $ L^{2}(\S),$
\begin{align*}\int_{\mathbb{R}^{d}} | \varphi (u)|  \S (du)( 1 + \| u \|) ^{-(d + 1)/2}\leqslant {\|\varphi \|_{L^{2}(\S)}}\sqrt{\int_{} (1 +  \| u \| )^{-(d + 1)}\S(du) }
\end{align*}
and the latter is finite with Lemma  \ref{lm:decrease-S-temp}.
  Let $  \F \Psi $ its Fourier transform in the sense of $ \sc'(\mathbb{R}^{d})$.
Let $ h\in  \mathcal{C}_{c}^{\infty }(A^{c})\subset \sc(\mathbb{R}^{d})$, then $ \hat h\in  \sc(\mathbb{R}^{d})\cap L^{2}(\S)$. 
By definition of $ H^{\perp}_{A}$, \begin{align*}0 = \int_{ }  \hat h\bar  \varphi d\S=\int_{ }  \hat hd\bar\Psi= \overline{\F\Psi(h)}.
\end{align*}
Hence $ \sp(\Psi )\subset A$.
 By  Theorem \ref{thm:SPW}, $ \Psi$ has a tempered density $ \psi :$ $$ \Psi  = \psi \Leb  = \varphi \s_{}\Leb$$ and $ \psi $ is a function of $ \E(  \textrm{conv}(A)).$
 Also, since $ \varphi \in L^{2}(\S)$, 
\begin{align*}
\int_{}\frac{ \psi^{2} }{\s_{} } = \int_{}\varphi ^{2}\s_{} =  \int_{}\varphi ^{2}\S<\infty ,
\end{align*}indeed $ \psi \in L^{2}(\s_{}^{-1})$ (with $ 1/0 = \infty $).

For the converse, let $ \psi \in L^{2}(\s_{}^{-1}) \cap \E(\conv(A))$, in particular $ \sp(\psi )\subset  \textrm{conv}(A)$, and let $ \varphi  = \psi \s_{}^{-1}\in L^{2}(\S)$. For $ h\in  \mathcal{C}_{c}^{\infty }(A^{c})\subset \mathscr  S(\mathbb{R}^{d})$,  
\begin{align*}0 = \F \psi(h)     = \langle \psi , \hat h \rangle= \int_{ }  \bar \varphi \hat h d\S,
\end{align*}
indeed $\varphi \in H_{A}^{\perp}$.
\end{proof}
\end{proof}

%
%
%


%
%

\subsection{Proof of Theorem  \ref{thm:main-krigid-iso}} 
\label{sec:prf-main-krigid}

We assume here that $ \s$ has a pole of order $ \k$ in $ 0.$ Let us prove $t^{\k} $-rigidity. It is enough to prove $ \gamma $-rigidity for some $ \gamma \in \sc(\mathbb{R}^{d})$ such that $ \gamma (t) = t^{\k}$ for $ t\in A$. 
We actually prove that it is $ \gamma $-rigid on $  \textrm{conv}(A)$, which in turns implies $ \k$-rigidity on $ A.$
Hence without loss of generality we assume $ A$ is convex, mostly for notational simplification.
We use  Theorem  \ref{thm:heart-article}: we must prove that $ \hat  \gamma $ is orthogonal to all $   \psi \in L^{2}(\s^{-1})\cap \E(A)$. For technical reasons we actually assume that $ \gamma (t) = t^{\k}$ on $ A^{1}:=\{x:d(x,A)\leqslant 1\}$. We use the following lemma.\begin{lemma}
\label{lm:ipp-deriv}
Let $ \gamma \in \C_{c}^{\infty }(\mathbb{R}^{d})$ coinciding with $ t^{\k}$ on $ A^{1}$ and $ \psi \in L^{2}(\s^{-1})\cap  \E(A)$. Then  
\begin{align*}
\int_{ } \psi \hat \gamma  = (-\imath)^{  | \k |}\partial ^{\k}\psi (0).
\end{align*}

\end{lemma}
Let us conclude the proof before proving the lemma: 
Since $ \psi \in L^{2}(\s_{}^{-1})$ and is analytic, we have by Lemma \ref{lm:polyn-equiv-0} for some polynomial $ Q$ with $ \partial ^{\k}Q(0) = \partial ^{\k}\psi (0)$ and for $ \varepsilon $ sufficiently small
\begin{align*}
\int_{ B(0,\varepsilon )}\frac{  | Q(u) | ^{2}}{\s_{}(u)} du< \infty ,
\end{align*}
by definition of $ \k$-incompatibility it forces $ \partial ^{\k}Q(0) = 0= \partial ^{\k}\psi (0)  = \int_{ }\psi \hat \gamma $, for all $ \psi \in \E(A)\cap L^{2}(\s^{-1})$. We proved that $ \M$ is  $ \gamma $-rigid, hence $ \k$-rigid on $ A$.

\begin{proof}[Proof of Lemma  \ref{lm:ipp-deriv}]
The proof would be elementary if $ \psi $ had fast decay (see \eqref{eq:xxx}), but the generality of what can be $ \psi $ requires fine approximation arguments based on the hypotheses.
We approximate $ \psi $ by $  \psi _{n}:= \psi f_{n}$    where $$  f_{n}(u)=\exp(-\|u/n\|^{2k + 2}).$$
The first point is that $ \partial ^{\k}\psi _{n}(0)=\partial ^{\k}\psi (0)$ as the first derivatives of $ f_{n}$ vanish in $ 0$, except for $ f_{n}(0) = 1.$
Let us show that $ f_{1}$ is a Schwarz function: all its derivatives clearly have fast decay, and for the Fourier transform,
for $ N>0,\m  \in \mathbb{N}^{d}$, 
\begin{align*}
\sup_{t}  \|t\|^{2N}  |\partial  ^{\m} \hat f_{1}  (t)| 
=& \sup_{t}  (\|t\|^{2})^{N}  | \widehat{ u^{\m} f_{1}}(t)  | 
\\
\leqslant & \sup_{t}\left|
\int_{}u^{\m}e^{-\|u\|^{2(k + 1)}}(  \|t\|^{2})^{N} e^{-i \langle t,u \rangle}du
\right| \\
= &\sup_{t}\left|\int_{}
u^{  \m  }e^{-\|u\|^{2(k + 1)}}(-1)^{N}(  \sum_{i = 1}^{d}\partial _{u_{i}}^{2})^{N} e^{-i \langle t,u \rangle}du
\right|\\
 \leqslant  & \int_{} |  ( \sum_{i}\partial ^{2}_{u_{i}})^{N}(u^{\m}e^{-\|u\|^{2(k + 1)}}) | du
\end{align*}with multiple integration by parts,
whence
\begin{align*}
C_{N}^{\m}: = \sup_{t}(1 + \|t\|)^{2N} | \partial ^{\m} \hat f_{1}(t) | <\infty .
\end{align*}
 Since $ \psi \in \E(A)$, Theorem \ref{thm:SPW} yields
\begin{align*}
 | \psi (u) | \leqslant C\exp(C\|u\|),u\in \mathbb{R}^{d},
\end{align*} hence $ \psi _{n}, \hat \psi _{n}$ both are $ \mathcal{C}^{\infty }$ with superpolynomial decay (recall that $ \hat \psi $ has bounded support).
This yields with standard arguments of classical Fourier transform
\begin{align}
\notag(-\imath)^{ | \k | }\partial ^{\k}\psi (0)=(-\imath)^{ | \k | } \partial ^{\k}\psi _{n}(0)=& (2\pi )^{d}\int_{} \hat \psi _{n}(t)t^{\k}dt\\
\label{eq:xxx}=&(2\pi )^{d}\int_{} \hat \psi _{n}(t)\gamma (t)+(2\pi )^{d}\int_{(A^{1})^{c}} \hat \psi_{n}(t) (t^{\k}-\gamma(t))dt 
\end{align}
using that  $ \gamma (t) = t^{\k}$ on $ A^{1}.$ 
Let us show that the first term converges to $ \int_{}\psi \hat \gamma .$
Recall that by Lemma \ref{lm:decrease-S-temp}, $$  \int_{}(1+\|u\|)^{-(d+1)}\s(u)du<\infty $$
and that $ (1+\|u\|)^{K} \hat \gamma (u)$ is bounded for any $ K>0$.  We also exploit $ \psi \in L^{2}(s^{-1})$ (with $ \psi (u)=0$ if $ \s(u)=0$), and Cauchy-Schwarz inequality: for some $ c<\infty ,$
\begin{align*}
&\left|(2\pi )^{d}
\int_{} \hat \psi _{n} \gamma -\int_{}
\psi \hat \gamma \right| =\left|
\int_{}\psi  \hat \gamma (1-f_{n})
\right|\\
\leqslant& \int_{} | {\psi } |  |  \hat \gamma (1-f_{n}) \s | {\s^{-1}}\\
\leqslant & \|\psi \|_{L^{2}(\s^{-1})}\sqrt{\int_{} | \hat \gamma  | ^{2} | 1- f_{n}| ^{2}  \s}\\
\leqslant & \|\psi \|_{L^{2}(\s^{-1})}\sup_{u}| \hat \gamma (u)(1+\|u\|)^{\frac{d+1}{2}} | \sqrt{\int_{} | 1-f_{n} | ^{2} 
(1+\|u\|)^{-(d+1)}
\s(u)du}\\
\leqslant &c\Big(\sup_{u\in B(0,\sqrt{n})} | 1-f_{n}(u) | ^{2}\int_{B(0,\sqrt{n})}(1+\|u\|)^{-(d+1)}\s(u)du
+\int_{B(0,\sqrt{n})^{c}}(1+\|u\|)^{-(d+1)}\s(u)du\Big)^{1/2}
\end{align*}
and all terms indeed go to $ 0.$

It remains to show that the second term of  \eqref{eq:xxx} goes to $ 0.$
 We use the fact that since $ \F\Psi $ is a distribution with compact support in $ A$, there is $ C<\infty ,m\in \mathbb{N}$ such that for $ N\in \mathbb{N}, t\in (A^{1})^{c}$
\begin{align*}
 | \hat \psi _{n}(t) | = &| \F\psi (\tau _{t} \hat f_{n}) | \leqslant \sup_{ | \m  | \leqslant m}C\|\partial ^{\m } \tau _{t}\hat f_{n}\|_{A} \leqslant C \sup_{ | \m  | \leqslant m}\sup_{y\in A}n^{d +  | \m | }C_{N}^{\m}(1 + n\|t-y\|)^{-2N},\\
 \leqslant & C\sup_{ | \m | \leqslant m }n^{d+ m-2N}C_{N}^{m}(1+nd(t,A))^{-2N}.
\end{align*}
Choosing $ N>\max(d+m, | \k | +2d)/2$ yields that the second member of \eqref{eq:xxx} goes to $ 0$, thereby concluding the proof.

\end{proof}
%

\subsubsection{  Proof of Proposition  \ref{prop:k-incomp-iso}}

 Let $ Q \in L^{2}(\s_{}^{-1},B(0,\varepsilon ))$ a polynomial, $ q$ the degree of lowest order terms, meaning $ Q = Q_{h} + R$ with $ Q_{h}$ a non-null homogeneous polynomial, i.e. veryfying for some $ q\in \mathbb{N},$ $ u\neq 0,$
\begin{align*}
Q_{h}(u) = \|u\|^{q}Q_{h}(\theta ),\theta  = \frac{ u}{\|u\|}\in  \mathbb  S  ^{d-1},
\end{align*} and $ R(u) = o(\|u\|^{q})$ as $ u\to 0$. We have 
\begin{align*}
\int_{B(0,\varepsilon ) }\s_{}^{-1}(u) | Q | ^{2}(u)du& \geqslant \int_{B(0,\varepsilon ) } \tilde \s(\|u\|)^{-1}\|u\|^{2q}( | Q_{h}(\theta )  | + o(1))^{2}du\\
&\geqslant \sigma _{d}\int_{ 0}^{\varepsilon } \tilde \s(\rho )^{-1}\rho ^{2q}\left(
\int_{  \mathbb  S  ^{d-1}} | Q_{h}(\theta ) | ^{2}d\theta  + o(1)
\right)\rho ^{d-1}d\rho .
\end{align*}
Therefore, if $ q\leqslant k$, indeed $ \s_{}^{-1} | Q | ^{2}$ is not integrable around $ 0$, which means $ Q$ cannot have terms of degree $ \leqslant k$, and $ \s$ is $ \k$-rigid for $  | \k |  \leqslant k.$

   \subsection{Necessity lemma}

   The necessity proofs rely on the following technique.

\begin{lemma}
\label{lm:no-Q-rigid}
Let $ Q$ a  $ \k$-polynomial  (for instance $ Q(u) = u^{\k}$), $ A\subset \mathbb{R}^{d}$ bounded measurable.
Assume for some $ \varepsilon >0$ there is $ \psi \in L^{2}(\s^{-1},\mathbb{R}^{d}  \setminus B(0,\varepsilon ))\cap \E(A)$ and $ \psi (0)\neq 0$, and that $ 0$ is not a pole of order $\k$ for $ \s^{-1}$. Then $ \M$ is not $ Q$-rigid on  $ A^{ +\eta  }$ for $ \eta >0$.\end{lemma}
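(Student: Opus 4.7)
The plan is to apply Theorem \ref{thm:heart-article}, which identifies linear $Q$-rigidity on $A^{+\eta}$ with the vanishing of $\int_{A^{+\eta}} \bar Q\, \hat\psi' = 0$ for every $\psi' \in \E(A^{+\eta}) \cap L^2(\s^{-1})$. Since the spectrum of such $\psi'$ lies in $A^{+\eta}$, the distributional Fourier duality already used in the proof of Theorem \ref{thm:main-krigid-iso} rewrites this integral as $L(\psi') := \sum_\m a_\m (-i)^{|\m|} \partial^\m \psi'(0)$, where $Q = \sum_\m a_\m u^\m$. To establish non-$Q$-rigidity, the goal reduces to exhibiting one $\psi' \in \E(A^{+\eta}) \cap L^2(\s^{-1})$ with $L(\psi') \neq 0$.

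For the construction, I would combine the given $\psi$ with a polynomial modifier $P$ and the Bessel smoothing factor of Lemma \ref{lm:psi-to-tilde}. Since $0$ is not a $\k$-pole, there exists a polynomial $P^*$ with nonzero coefficient $b^*_\k$ at $u^\k$ and $P^* \in L^2(\s^{-1}; B(0,\varepsilon))$. Let $\k_0 \succeq \k$ be the index appearing in $Q$ guaranteed by the $\k$-polynomial definition; set $P := u^{\k_0-\k} P^*$, which still lies in $\mathcal{P} := \{R \in \mathbb{C}[u] : R \in L^2(\s^{-1}; B(0,\varepsilon))\}$ (multiplication by the bounded factor $u^{\k_0-\k}$ on $B(0,\varepsilon)$ is harmless) and whose coefficient at $u^{\k_0}$ is $b^*_\k \neq 0$. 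Pick $\eta' \in (0,\eta)$ and define
\[
\psi'(u) := P(u)\, \psi(u)\, J_d(\eta' u/M)^M
\]
with $M$ large enough that Lemma \ref{lm:psi-to-tilde} applies. That lemma places $\psi' \in \E(A^{+\eta'}) \subseteq \E(A^{+\eta})$ and provides the decay bound $|P(u) J_d(\eta' u/M)^M| = O(\|u\|^{-p})$ for any prescribed $p$.

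Next I would verify $\psi' \in L^2(\s^{-1})$ by splitting the integral at $B(0,\varepsilon)$: outside, the decay bound together with $\psi \in L^2(\s^{-1}; B(0,\varepsilon)^c)$ gives integrability; inside, $\psi$ and $J_d^M$ are analytic at $0$ hence bounded, so $|\psi'|^2/\s \leq c\, |P|^2/\s$ is integrable by construction of $P$. The hard step is checking $L(\psi') \neq 0$. Writing $\psi' = P\cdot g$ with $g := \psi\, J_d(\eta'\cdot/M)^M$ analytic near $0$ and $g(0) = \psi(0) J_d(0)^M \neq 0$, the Leibniz rule unfolds $L(\psi')$ as $\sum_{\m' \preceq \k_0} b_{\m'}(P)\, c_{\m'}$, where a short calculation using the $\k$-polynomial structure of $Q$ (namely that $a_\m = 0$ unless $\m \preceq \k_0$) isolates the top coefficient $c_{\k_0} = a_{\k_0} (-i)^{|\k_0|} \k_0!\, g(0) \neq 0$. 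The leading contribution $b^*_\k c_{\k_0}$ is therefore nonzero, but the lower-order contributions from $\m' \prec \k_0$ could a priori cancel it. To rule this out I would view $R \mapsto L(\tilde\psi_R)$ as a linear functional on the full space $\mathcal{P}$: if it vanished identically, testing against the family $u^\lambda P^* \in \mathcal{P}$ ($\lambda \in \mathbb{N}^d$) would generate a system of linear relations on the $c_{\m'}$, and by varying $\lambda$ these relations propagate into the forced vanishing $c_{\k_0} = 0$, contradicting the calculation above. Hence some $R \in \mathcal{P}$ yields $L(\tilde\psi_R) \neq 0$, giving a witness to non-$Q$-rigidity and completing the argument.
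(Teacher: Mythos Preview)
Your construction of $\psi' = P\,\psi\,J_d(\eta' \cdot/M)^M$ and the verification that $\psi' \in \E(A^{+\eta}) \cap L^2(\s^{-1})$ match the paper exactly, as does the Leibniz expansion $L(\psi') = \sum_{\m'\preceq \k_0} b_{\m'}(P)\,c_{\m'}$ with $c_{\k_0}\neq 0$. The gap is your final step: the assertion that testing against $u^\lambda P^*$ for varying $\lambda$ ``propagates into the forced vanishing $c_{\k_0}=0$'' is not a proof. You must exhibit one specific $\lambda$ making $\Phi(u^\lambda P^*)\neq 0$. The correct choice is $\lambda = \k_0-\j_0$, where $\j_0$ is a $\preceq$-minimal element of $\{\j\preceq \k : b^*_\j\neq 0\}$ (nonempty since it contains $\k$). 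Then the only coefficients $b^*_\j$ entering $\Phi(u^\lambda P^*)$ are those with $\j\preceq \j_0$, and minimality kills all but $\j=\j_0$, leaving $\Phi(u^\lambda P^*)=b^*_{\j_0}c_{\k_0}\neq 0$. This is exactly the paper's reduction: it first replaces the witness polynomial by $u^{\k-\j_0}P^*$ so that no monomial strictly $\prec\k$ survives, and separately reduces $\k_0$ to $\k$ (legitimate because a $\k_0$-pole is automatically a $\k$-pole when $\k\preceq\k_0$, so ``not a $\k$-pole'' implies ``not a $\k_0$-pole''). After those two reductions the Leibniz sum collapses to the single term $a_\k b_\k\,\k!\,\tilde\psi(0)$, and no existence argument is needed.

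There is also a minor technical wrinkle: you fix $M$ (hence $g$ and the $c_{\m'}$) based on $\deg(P)$, then want to treat $R\mapsto L(\tilde\psi_R)$ as a single linear functional on $\mathcal P$. But Lemma~\ref{lm:psi-to-tilde} requires $M$ to grow with $\deg(R)$ to control the tail in $L^2(\s^{-1};B(0,\varepsilon)^c)$, so $g$ is not truly fixed across $R$. Once the right $R=u^{\k_0-\j_0}P^*$ is identified this is harmless---just rebuild $\psi'$ with $M$ adapted to that $R$---but it means the ``linear functional on $\mathcal P$'' framing is not quite well-posed as written.
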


\begin{proof}

Since $ 0$ is not a pole of order $\k$, it means that there exists a polynomial $ P(u) = \sum_{\k'}b_{\k'}u^{\k'}$ with  $ b_{\k} \neq 0$ and for some $ \varepsilon >0$\begin{align*}
\int_{ B(0,\varepsilon )} | P | ^{2}\s_{}^{-1}<\infty .
\end{align*}
If there is also a term $ b_{\k'} \neq 0$ for some $ \k'\prec\k,$ (i.e.$\k'\preceq\k,\k'\neq \k$), we have
\begin{align*}\tilde P = u^{\k-\k'}P\in L^{2}(\s^{-1},B(0,\varepsilon  )),
\end{align*}
and $  | \tilde P | ^{2}\s^{-1}$ is still integrable on $ B(0,\varepsilon )$,
we can assume without loss of generality that there is no term strictly smaller than $\k$, i.e. 
\begin{align}
\label{eq:tildeP}
P(u)= \sum_{\k'\not\prec\k}b_{\k'}u^{\k'}.
\end{align} Let then 
\begin{align*}
  \tilde  \psi (u)  =   \psi(u) J_{d}(u\eta/\deg(P))^{\deg(P)},
\end{align*}
satisfying $  \tilde\psi (0)\neq 0$. By Lemma  \ref{lm:psi-to-tilde}, $ P  \tilde  \psi\in  \E(B(0,A^{ + \eta } ))$ and $  | P \tilde \psi  | \leqslant c | \psi  | $, hence also $P \tilde \psi \in  L^{2}(\s^{-1},B(0,\varepsilon )^{c})  $.
We also have $ P  \tilde  \psi \in L^{2}(\s^{-1},B(0,\varepsilon ))$ because $  P\tilde  \psi(u)\sim P(u) $ as $ u\to 0$, hence $ P  \tilde  \psi \in L^{2}(\s^{-1})\cap \E(A^{ + \eta }).$

Since $ Q$ is a $ \k$-polynomial, $ Q(u) = \sum_{\m\preceq \k_{0}}a_{\m}u^{\m}$ for some $ \k_{0}\succeq \k$ with $ a_{\k_{0}}\neq 0$. Since $ 0$ is also a pole of order $ \k_{0}$ (because $ \k_{0}\succeq \k$), the whole proof can be done with $ \k_{0}$ instead of $ \k$, and we assume without loss of generality $ \k = \k_{0}$.

Let  $ \gamma \in \sc(\mathbb{R}^{d})$ coinciding with $ Q$ on $ A^{ + \eta }$. By Theorem \ref{thm:heart-article} the proof is complete if we prove that 
$ P \tilde \psi $ (an element of $  L^{2}(\s^{-1})\cap \E(A^{ + \eta }) $) is not orthogonal to $ \hat \gamma $ (and $ \gamma $-rigidity on $ A^{ + \eta }$ is equivalent to $ Q$-rigidity on $ A^{ + \eta }$).

Using lemma  \ref{lm:ipp-deriv} for each $ \m \preceq \k_{0}$,    
\begin{align*}\int_{ } \hat \gamma  P \tilde \psi =\sum_{\m}a_{\m} \int_{A}t^{\m} \widehat{P \tilde \psi }(t)dt=
\sum_{\m}(-\imath)^{\m}a_{\m}\partial ^{\m}(P \tilde \psi )(0) .
\end{align*}

Denote by $  \bar a_{\m}=(-\imath)^{\m}a_{\m}.$
We have with  \eqref{eq:tildeP} \begin{align*}
\sum_{\m}\bar a_{\m}\partial ^{\m}(P \tilde \psi )(0)   =& \sum_{\m\preceq \k}\sum_{\k'\not\prec\k}b_{\k'}\bar a_{\m}\partial ^{\m}(u^{\k'} \tilde \psi  )(0)\\
 =& \bar a_{\k}b_{\k}\partial ^{\k}(u^{\k} \tilde \psi  )(0) + \sum_{\m\preceq \k,\k'\not\prec \k,\k'\neq \k,\m\neq \k}b_{\k'}\bar a_{\m}\partial ^{\m}(u^{\k'} \tilde \psi  )(0).
\end{align*}

For $ \k'\not\prec\k,\m\preceq \k$, we have $ \k'\not\preceq \m$, hence, recalling $ a_{\k}b_{\k}\neq 0,$
the conclusion comes from the fact that $ \tilde \psi (0)\neq 0$ and the following identity:
We have for $ \k'\not\prec  \m$
\begin{align*}
\partial ^{\m}(u^{\k'} \tilde \psi   )(0) =\delta _{\m = \k'}  \m! \tilde \psi  (0)
\end{align*}
with $ \m! = m_{1}\dots m_{d}.$
Let us finally prove this identity.

The assumption yields that we either have $ \m=\k' $
 , or
$ \m_{i}<\k'_{i}$ for some $ i$. Then the Leibniz formula for functions of one argument yields,  with  the notations $ \k'_{ \hat i} = (\k'_{1},\dots ,\underbrace{{0}}_{i},\dots ,\k'_{d}),$ $u_{ \hat i}  =  (u_{1},\dots ,\underbrace{0}_{i},\dots ,u_{d}),u=(u_{i},u_{ \hat i}),$
\begin{align*}
\partial ^{\m}(u^{\k'}\tilde \psi  (u)) =& (\prod_{j\neq i}\partial _{j}^{\m_{j}}) \partial _{i}^{\m_{i}}(u_{i}^{\k'_{i}}u_{ \hat i}^{\k'_{ \hat i}}\tilde \psi  (u_{i},u_{ \hat i}))\\
 = &(\prod_{j\neq i}\partial _{j}^{\m_{j}}) (\sum_{a>0}\lambda _{a}u_{i}^{a}\tilde \psi  _{a}(u) + 1_{\k'_{i} = \m_{i}}u_{ \hat i}^{\k'_{ \hat i}}\m_{i}!\tilde \psi  (u_{i}, u^{ \hat i}))\textrm{  for some $ \lambda _{a}\in \mathbb{R},\textrm{ functions  }\tilde \psi  _{a}$,}\\
  = &\sum_{a>0}\lambda _{a}u_{i}^{a}(\prod_{j\neq i}\partial _{j}^{\m_{j}}) (\tilde \psi  _{a}(u)) + 1_{\k'_{i} = \m_{i}}\m_{i}!(\prod_{j\neq i}\partial _{j}^{\m_{j}} ) (u_{ \hat i}^{\k'_{ \hat i}}\tilde \psi  (u))\\
\partial ^{\m}(u^{\k'}\tilde \psi  (u)) | _{u = 0}     & 
   \begin{cases} 
 =    0$  if $\m_{i} < \k'_{i}\\
   = \m! \tilde \psi  (0)$  if $ \m = \k'$  (with an induction on $ j$)$.
    \end{cases}
\end{align*}

%
%
\end{proof}
%

 \subsubsection{Proof of Proposition  \ref{prop:not-lmr-finite-z}}
 \label{sec:prf-converse}
  Let $ u_{1},\dots ,u_{m}\in \mathbb{R}^{d}$ the poles of $ \s^{-1}$, let $\varepsilon >0, q$ such that  $ P(u): = \prod_{i}\|u-u_{i}\|^{2q}$ is in $ L^{2}(\s^{-1},B(u_{i},\varepsilon ))$ for all $ i$. Introduce 
\begin{align*}
\psi (u) = P(u)J_{d}(u/\eta M)^{M}
\end{align*}
with $ M = \frac{ d + 1}{2}(\deg(P) + p + d)$. Lemma  \ref{lm:psi-to-tilde} (with $ 1\in \E(\{0\})$) yields $ \psi \in \E(B(0,\eta )) $ and $  | \psi (u) | \leqslant c(1 + \|u\|)^{-p-d}$ ($ \psi $ is smooth hence bounded around $ 0$).
We have
\begin{align*}
\int_{\mathbb{R}^{d}}\frac{   |  \psi (u) |   ^{2}}{\s_{}(u)}du \leqslant &c\int_{\mathbb{R}^{d}  \setminus \cup _{i}B(u_{i},\varepsilon )} |  \psi(u) |  ^{2}(1 + \|u\|)^{p} + \sum_{i}c_{i,\varepsilon }\int_{B(u_{i},\varepsilon )}\frac{ \|u-u_{i}\|^{4q}}{\s_{}(u)}du\\
\leqslant &c \int_{}( 1 + \|u\|)^{-2d-2p}(1 + \|u\|)^{p}du + C\\
<&\infty  ,
\end{align*}
hence $ \psi \in L^{2}(\s^{-1})\cap \E(B(0,\eta ))  \setminus \{0\}.$
Let $ \gamma  = \widehat{    \psi} $, bounded and supported by $ B(0,\eta  )$. $ \M$ is not $ \gamma $-rigid on $ B(0,\eta )$ by Theorem \ref{thm:heart-article} because with Parseval's identity
\begin{align*}
\int_{ }  \bar  \gamma \widehat{  \psi}  = \int_{ } | \psi  | ^{2}\neq 0.
\end{align*}
In particular, $ \M$ is not linearly maximally rigid.

The sufficiency part about $ \k$-rigidity is Theorem \ref{thm:main-krigid-iso}.
For the necessity,  define instead 
\begin{align*}
\tilde \psi (u) = \prod_{i  : u_{i}\neq 0}\|u-u_{i}\|^{2q}J_{d}(u/\eta M)^{M}.
\end{align*}
A similar reasoning shows that $ \tilde \psi $ is in $ L^{2}(\s^{-1},\mathbb{R}^{d}  \setminus B(0,\varepsilon ))\cap \E(B(0,\eta ))$ for some $ \varepsilon >0$ with $ \tilde \psi (0)\neq 0.$  We can then conclude with  Lemma  \ref{lm:no-Q-rigid}.

\subsubsection{Proof of Proposition  \ref{ex:spectral-counterexample}}
\label{sec:prf-counterex}

 The proof consists in showing that $ \M$ is $ \gamma $-rigid for any $ \gamma \in \sc(\mathbb{R}^{d})$ taking value $ 1$ on $ A$, hence $ 0$-rigid on $ A$. For that we must prove by Theorem \ref{thm:heart-article} that $ \int_{ } \hat \gamma \psi  = 0$ for $ \psi \in L^{2}(\s^{-1})\cap \E(A)$.
Since for some $ \varepsilon >0,$ for $ u_{1}\in [-\varepsilon ,\varepsilon ],$
\begin{align*}
\int_{B((1 + u_{1},0),\varepsilon )}\frac{1}{\s(u)}du \geqslant \int_{ [1 + u_{1}-\varepsilon ,1 + u_{1} + \varepsilon ]\times [-\varepsilon ,\varepsilon ]}cu_{2}^{-2}du_{1}du_{2} =  \infty ,
\end{align*}
$ \psi $ must vanish on $ \{(1 + u_{1},0); | u_{1} | \leqslant \varepsilon \}$.
  Hence $ \lambda :v\mapsto \psi (v,0)$ is an entire function vanishing on an interval. Therefore $ \lambda \equiv 0$, in particular $ \psi (0) = 0$.
  Lemma \ref{lm:ipp-deriv} then yields the conclusion.
%

       \subsection{   Proof of Theorem \ref{thm:converse-rigidity}}
     \label{sec:prf-cvs-main}

%

 {\bf Isotropic case}

We prove that there is no $ Q$-rigidity for some polynomial $ Q = \sum_{\m\preceq \k_{0}}a_{\m}u^{\m}$ for some $ \k_{0}\succeq \k.$ For $ \k$-rigidity, simply take $ Q(u) = u^{\k}.$
Since $ \M$ is not LMR, there exists $ \psi _{0}\in L^{2}(\s^{-1})\cap \E(A)  \setminus \{0\}$. 
The proof relies on a self-contained lemma, which will be useful elsewhere.

\begin{lemma}
For $ \s$ isotropic, if there is $ \psi _{0}\in L^{2}(\s^{-1})\cap \E(B(0,R))  \setminus \{0\}$, then for $ \eta >0$ there is $ \psi \in L^{2}(\s^{-1})\cap \E(B(0,R + \eta ))  \setminus \{0\}$ that is isotropic.
\end{lemma}

\begin{proof}

Let  $ P$ a polynomial such that $\bar  \psi _{0}(u)\sim P(u)$ as $ u\to 0$ (Lemma \ref{lm:polyn-equiv-0}), and 
\begin{align*}
\psi _{1}(u) = P(u)\psi _{0}(u) J_{d}(u\eta /\deg(P) )^{\deg(P)}.
\end{align*}
By Lemma \ref{lm:psi-to-tilde}, $ \psi _{1}\in L^{2}(\s^{-1})\cap \E(A^{ + \eta }).$

 Then define on $  \mathbb C ^{d}$ the rotational average, using the analytic extension of $ \psi _{1}$ on $  \mathbb C ^{d},$
 \begin{align*}
 \psi (z) = \int_{  O(d)}\psi _{1}({\theta }z)d\theta,z\in  \mathbb C ^{d} 
\end{align*}with the Haar measure on the orthogonal group $ O(d)$ of $ \mathbb{R}^{d}$. Still denote by $ \psi $ its restriction to $ \mathbb{R}^{d}$. We must prove several things:
\begin{itemize}
\item  \underline {   $ \psi $ is     isotropic on $ \mathbb{R}^{d}$:} It is immediate from the definition that for $ \theta \in O(d), z\in  \mathbb C ^{d},\psi (\theta z) = \psi (z).$
\item  \underline {    $ \psi $ is not the null function}: By definition, $ \psi _{1}(u)\sim  | P(u) | ^{2}$ as $ u\to 0$, hence $   \psi $ does not vanish identically in the neighbourhood of $ 0$, so that  $  \psi \not\equiv 0. $
\item  \underline { $ \psi \in L^{2}(\s^{-1})$:}   We have
 with the triangle inequality and the Cauchy-Schwarz inequality in $ L^{2}(\s^{-1})$
 \begin{align*}
\int_{\mathbb{R}^{d}}\frac{    | \psi (u) | ^{2}}{\s_{}(u)}du \leqslant &\int_{\mathbb{R}^{d}}\int_{  O(d)\times  O(d)}\frac{  | \psi_{1} ({\theta }u) |   | \psi _{1}({\theta '}u) | }{\s_{}(u)}d\theta d\theta 'du\\
\leqslant & \int_{  O(d)\times  O(d)}\sqrt{\int_{\mathbb{R}^{d}}\frac{ |  \psi_{1} ({\theta }u)^{2} | }{\s_{}(u)}du}\sqrt{\int_{\mathbb{R}^{d}}\frac{ |  \psi _{1}({\theta '}u)^{2} | }{\s_{}(u)}du}d\theta d\theta '\\
\leqslant & \int_{ O(d)\times O(d)}\left(
\sqrt{\int_{ \mathbb{R}^{d}}\frac{  | \psi _{1}(u) | ^{2}}{\s(u)}du}
\right)^{2}d\theta d\theta '<\infty 
\end{align*}
by isotropy of $ \s$, using $ \psi _{1}\in L^{2}(\s^{-1}).$
\item  \underline { $ \psi $ is analytic:}    
For $z, \zeta \in  \mathbb C ^{d}$, since $ \psi _{1}$ is analytic on $  \mathbb C ^{d}$,
\begin{align*}
\psi (z + \zeta ) = \int_{ O(d)}[\psi _{1}({\theta }z) + \psi _{1}'({\theta }z)\zeta  + O(\zeta )]d\theta  = \psi (z) + \left(
\int_{ }\psi _{1}'({\theta }z)d\theta 
\right)\zeta  + O(\zeta ),
\end{align*}using that the quantities involved in the $ O(\cdot )$ are locally bounded; hence $ \psi $ is analytic as well.
\item \underline { $ \psi \in \E(B(0,R + \eta ))$:} 
Also, for $ z\in \mathbb{C} ^{d},\theta \in O(d),$ $$\| {\theta }z\|^{2}_{  \mathbb C } = \| {\theta }  \mathscr  Rz\|^{2} + \|{\theta }  \mathscr  Iz\|^{2} = \|z\|^{2}_{  \mathbb C }.$$ Using both implications of Theorem \ref{thm:SPW}  on $ A^{ + \eta } =  B(0,R +\eta  )$,
\begin{align*}
 | \psi (z) | \leqslant c\int_{ O(d) }\exp((R + \eta )\|z\|_{  \mathbb C })d\theta 
\end{align*}
hence $ \sp(\psi )\subset B(0,R +\eta  ).$
\end{itemize}Finally, $ \psi $ is  indeed an isotropic element of $ L^{2}(\s^{-1})\cap \E(B(0,R + \eta ))  \setminus \{0\}.$

\end{proof}

 Hence non isotropic terms vanish and for some  $ q\in \mathbb{N},\kappa \neq 0$, $$ \psi (u) = \kappa \|u\|^{2q}(\sum_{l>0}a_{l}\|u\|^{2l}),$$ where $ \|u\|^{2l} = (\sum_{i}  u_{i}  ^{2})^{l}$. Hence $ \tilde \psi (u): = \psi (u) \|u\|^{-2q}  $ is analytic and isotropic as well, with spectrum in $ B(0,R + \eta )$ with Theorem \ref{thm:SPW}, with $ \tilde \psi (0)\neq 0$ (which was the whole point).
%
We then can use directly Lemma  \ref{lm:no-Q-rigid} to conclude that $ \M$ is not $ Q$-rigid on $ B(0,R + \eta )$. 
%

 {\bf Separable case.}   We assume that $ \s$ is not (linearly) maximally rigid, hence there is $ \psi \in L^{2}(\s^{-1})\cap \E(A)  \setminus \{0\}$.
Let us build a separable version. Recall that, as analytic function of $ \E(A)$, $ \psi $ has an extension on $  \mathbb C ^{d}$.  Define for $1\leqslant i\leqslant d, z^{i}\in \mathbb{C}^{d-1},$ 
\begin{align*}
\psi^{z^{i}} _{i}(z_{i}) =\psi (z_{i};z^{i}) ,z_{i}\in  \mathbb C ,
\end{align*}
where $  (z_{i};z^{i})$ consists in $ z_{i}$  at the $ i$-th position, surrounded by the $ d-1$ components of $ z^{i}.$
Let the domain $ D\subset \mathbb{R}^{d-1}$ of $ u^{i}\in \mathbb{R}^{d-1}$ where $ \psi_{i} ^{u^{i}}$ is not identically $ 0$.
We have with Fubini's theorem
\begin{align*}
0<\int_{ }\frac{  | \psi  | ^{2}}{\s_{}} = \int_{ \mathbb{R}^{d-1}}\int_{ \mathbb{R}}\frac{ | \psi^{u^{i}} _{i}(u_{i}) | ^{2}}{\s_{i}(u_{i})} \frac{ 1}{\prod_{j\neq i}\s_{j}(u_{j})}du^{i}du_{i } = \int_{D}\int_{ \mathbb{R}}\frac{  | \psi^{u^{i}} _{i}(u_{i}) | ^{2}}{\s_{i}(u_{i})} \frac{ 1}{\prod_{j\neq i}\s_{j}(u_{j})}du^{i}du_{i } <\infty  
\end{align*}
hence there exists $ u^{i}\in D$ such that $ \psi _{i}^{u^{i}}\in L^{2}(\s_{i}^{-1})  \setminus \{0\}$. Clearly $ \psi _{i}^{u^{i}}$ is analytic as analycity in several complex variables implies analycity in each variable.
Let $ q_{i}\in \mathbb{N},\kappa _{i}\neq 0$ the dominating power and coefficient in $ 0:$ $ \psi _{i}^{u ^{i}}(z)\sim \kappa _{i}z^{q_{i}}$ as $ z\to 0.$ Define $\psi _{i}(z): = z^{-q_{i}}\psi _{i}^{u^{i}}(z),z\in  \mathbb C $, still analytic. Define finally 
\begin{align*}
\tilde \psi (z) = \prod_{i}\psi _{i}(z_{i}), z = (z_{i})\in \mathbb{R}^{d}.
\end{align*}Let $ \varepsilon >0.$
We have $ \tilde \psi (0)\neq 0,$ and for some finite $ \kappa >0$, $ \psi _{i}(z)\leqslant \kappa $ if $  \|z\|_{  \mathbb C } \leqslant \varepsilon$. 
 Since for each $ i,$ $ \psi _{i}^{u^{i}}\in L^{2}(\s_{i}^{-1})$, we have $ \tilde \psi \in L^{2}(\s^{-1},   B(0,\varepsilon )^{c})$ (as functions on $ \mathbb{R}^{d}$).

Let $ A_{R} = [-R,R]^{d}$.  It remains to show $ \tilde \psi \in \E(A_{R }).$ We have 
\begin{align*}
s_{A_{R}}(\zeta ) =R\sup_{x: | x_{i} | \leqslant 1}\sum_{i}x_{i}\zeta _{i} = R\sum_{i} \textrm{sign}(\zeta _{i}) \zeta _{i} = R  \sum_{i} | \zeta _{i} |  ,\zeta \in \mathbb{R}^{d}.
\end{align*} Since $ \psi \in \E(A_{R}),$  for $ z\in  \mathbb C ,$ 
\begin{align*}
 | \psi _{i}^{u^{i}}(z) | =  | \psi (z;u^{i}) |  \leqslant &C \exp(s_{A_{R}}(  \mathscr  I (z;u^{i})) )\\
 \leqslant& C\exp( Rc_{u^{i}}  + R|  \mathscr  Iz | )\\
 \leqslant & c'_{u^{i}} \exp(R |  \mathscr  Iz | ).
\end{align*}
Furthermore, $ \tilde \psi $ is
analytic, satisfies $ \tilde \psi (0)\neq 0$ and for $ \|z\|_{  \mathbb C }>\varepsilon ,$
\begin{align*}
 | \tilde \psi (z) | \leqslant \prod_{i}c'_{i}\exp(R |  \mathscr  Iz_{i} | ) = c'\exp(R\sum_{i} |  \mathscr  Iz_{i} | ) = c'\exp(s_{A_{R}}(  \mathscr  Iz)).
\end{align*}
 Theorem \ref{thm:SPW} again yields that $ \tilde \psi \in \E(A_{R})$. Then one can conclude with Lemma  \ref{lm:no-Q-rigid} that we do not have $ Q$-rigidity  on $ A_{R + \eta }$.

%
%
%

 \subsection{Structure factor of cluster lattices}
 \label{sec:flower-prf}
 
\begin{proof}[Proof of Proposition \ref{lm:flower}]
The strategy is to show that 
\begin{align*}
(2\pi )^{-d}\int_{ }| \hat \gamma  | ^{2} d\S=  \textrm{Var}\left(I_{\Phi _{\mu }}(\gamma )\right)\end{align*}
for $ \gamma $ the indicator function of a bounded measurable set. Since $ \S$ is symmetric, we assume also that test functions $ \gamma $ are symmetric.
 The conclusion then comes from formula  \eqref{eq:phase-formula-SF}. 

Denote by $ \nu_{1}  $ and $ \nu _{2}$ the (finite) first and second order intensity  measures of $ \c_{0}$, so that  in particular, Fubini's theorem yields
\begin{align*}
\mathbf{E}\varphi (u) = \int_{ }e^{-i \langle u,x \rangle}\nu_{1} (dx),  \mathbf{E} | \varphi (u) | ^{2} = \int_{ }e^{-i \langle u,x-y \rangle}\nu _{2}(dx,dy) + \kappa .
\end{align*}
We  perform Fourier transform on tempered measures (see Section \ref{sec:theory}), and recall in particular that  
\begin{align} \label{eq:multipl} \F(\gamma   \star \nu _{1} ) =  \hat \gamma  \F\nu   _{1}
\end{align} is valid because $  \gamma $ has compact support and $ \nu _{1}$ is a non-negative finite measure. $ \F\nu _{1}(u) = \mathbf{E}\varphi (u)$ is actually a bounded function, hence $ \hat \gamma \F\nu _{1}$ and $ \gamma  \star \nu _{1}$ are $ L^{2}$ functions.

We use  the conditional variance formula :
\begin{align*}
  \textrm{Var}&\left(I_{\Phi _{\mu }}(\gamma )\right)
 = \mathbf{E}\left(  \textrm{Var} \left(\sum_{\m\in \Phi }\sum_{x\in \c_{\m}}\gamma (\m + x)  \Big| \Phi \right) \right) +   \textrm{Var}\left(\mathbf{E}\left(\sum_{\m\in \Phi }\sum_{x\in \c_{\m}}\gamma (\m + x) \Big| \Phi\right) \right)\\
  = &\mathbf{E}\left(\sum_{\m\in \Phi }  \textrm{Var}\left(\sum_{x\in \c_{\m}}\gamma (\m + x)\Big| \Phi \right)\right) \textrm{ by independence of the $ \c_{\m}$ }\\
  &\hspace{2cm}+  \textrm{Var}\left(\sum_{\m\in \Phi }\int_{ }\gamma (\m + x)\nu_{1} (dx)\right)\textrm{ by definition of $ \nu _{1}$ }\\
   = &\int_{ }  \textrm{Var}\left(\sum_{x\in \c_{0}}\gamma (y + x)\right)dy\textrm{ because $ \Phi $ has intensity $ \Leb$}\\
   & \hspace{2cm}+ (2\pi )^{-d}\int_{ }\S_{\Phi }(du) | \widehat{  \gamma  \star \nu_{1}} (u) | ^{2}\textrm{  by  definition of $ \S_{\Phi }$ and \eqref{eq:phase-formula-SF}}\\
    = &\int_{ \mathbb{R}^{d}}\left[
\mathbf{E}\left(\sum_{x\in \c_{0}}\gamma (y + x)\right)^{2}
-\left(
\int_{ }\tau _{y}\gamma \nu_{1} 
\right)^{2}
\right] dy+  
     (2\pi )^{-d}  \int_{ }\S_{\Phi }(du) | \hat \gamma  \F \nu_{1} (u) | ^{2}\textrm{ by  \eqref{eq:multipl} }\\
      = 	&\int_{ }\int_{ }\gamma (y + x)^{2}\nu_{1} (dx)dy  + \int_{ }  \gamma (x + y)\gamma (z + y)\nu _{2}(dx,dz)dy-\int_{ } \gamma   \star \nu _{1}(y)^{2}dy\\
      & \hspace{2cm}+ (2\pi )^{-d}\int_{ }\S_{\Phi }(du) | \hat \gamma (u) | ^{2} | \mathbf{E}\varphi (u)| ^{2}
     \\
       = &\int_{ }\gamma ^{2}\int_{ }\nu_{1}+ 
      \int_{ } \gamma  \star \gamma (x-z)\nu _{2}(dx,dz) + (2\pi )^{-d}\int_{ }\S_{\Phi }(du) | \hat \gamma (u) | ^{2} | \mathbf{E}\varphi (u)| ^{2}\\
& \hspace{2cm}-  { (2\pi )^{-d}}\int_{ }  | \widehat{ \gamma  \star \nu _{1}}(u) | ^{2}du \textrm{ with Plancherel formula in $ L^{2}(\mathbb{R}^{d})$} \\
        = &\kappa \int_{ }   \gamma ^{2}  +  \int_{ }\left(
\int_{ } (2\pi )^{-d}\widehat{ \gamma   \star \gamma }(u)e^{i \langle u,x-z \rangle }du
\right)\nu _{2}(dx,dz) \textrm{ by Fourier inversion in $ L^{2}(\mathbb{R}^{d})$ }\\
&\hspace{2cm}+  (2\pi )^{-d}\int_{ }\S_{\Phi }(du) | \hat \gamma (u) | ^{2} |  \mathbf{E}\varphi (u)| ^{2}-(2\pi )^{-d}\int_{ }  |  \hat \gamma(u) | ^{2}  |  \F \nu_{1}(u) | ^{2}du,\,\\
         = & (2\pi )^{-d}\kappa \int_{ } \hat \gamma ^{2}+ (2\pi )^{-d} \int_{ } | \hat \gamma (u) | ^{2}\left(
\mathbf{E} | \varphi (u) | ^{2}-\kappa 
\right)du\\
& \hspace{2cm}   +  (2\pi )^{-d}\int_{ }\S_{\Phi }(du) | \hat \gamma (u) | ^{2}  | \mathbf{E}\varphi (u)| ^{2}- (2\pi )^{-d}\int_{ }  |  \hat \gamma(u) | ^{2}  | \mathbf{E}\varphi (u)| ^{2}du\\
 = &(2\pi )^{-d}\left[
\int_{ } | \hat \gamma (u) | ^{2}\left(
\mathbf{E} | \varphi (u) | ^{2}- | \mathbf{E}  \varphi (u) | ^{2}
\right)du + \int_{ }\S_{\Phi }(du) | \hat \gamma (u) | ^{2} | \mathbf{E}\varphi (u) | ^{2}
\right].
\end{align*}
Hence we have indeed $ \S =( \mathbf{E} | \varphi (u) | ^{2}- | \mathbf{E}\varphi (u) | ^{2})\Leb +  | \mathbf{E}\varphi (u) | ^{2}\S_{\Phi }.$

\end{proof}     
    \section*{Acknowledgements}
   I am grateful to  Thomas Lebl\'e, David Dereudre, G. Mastrilli and Lo\"ic Thomassey for insights and discussions about Coulomb gases and hyperuniformity. I also wish to thank T. Hartnick which pointed out the possible application to quasicrystals.

\bibliographystyle{plainnat} \
\bibliography{linear-rigidity.bbl}

\end{document}